\newtheorem{lemma}{Lemma}[section]
\newtheorem{theorem}{Theorem}
\newtheorem{corollary}[lemma]{Corollary}
\theoremstyle{definition}
\newtheorem{remark}[lemma]{Remark}
\newtheorem{example}[lemma]{Example}
\newtheorem{definition}[lemma]{Definition}
\newcommand{\proofend}{$\Box$\bigskip}
\newcommand{\R}{{\mathbb R}}
\newcommand{\HH}{{\mathbb H}}
\newcommand{\Sph}{{\mathbb S}}
\newcommand{\Tr}{{\rm Tr\ }}
\def\Hess{\operatorname{Hess}}
\def\const{\mathrm{const}}
\def\grad{\mathrm{grad}\,}
\def\GL{\mathrm{GL}}
\def\diag{\operatorname{diag}}
\def\sign{\operatorname{sign}}
\begin{document}

\title{Ivory's Theorem revisited}

\author{Ivan Izmestiev\footnote{
Universit\'e de Fribourg,
D\'epartement de math\'ematiques,
Chemin du Mus\'ee 23,
CH-1700 Fribourg;
ivan.izmestiev@gmail.com}
\and
Serge Tabachnikov\footnote{
Department of Mathematics,
Pennsylvania State University,
University Park, PA 16802;
tabachni@math.psu.edu} 
}

\date{}
\maketitle

\begin{abstract}
Ivory's Lemma is a geometrical statement in the heart of J. Ivory's calculation of the gravitational potential of a homeoidal shell. In the simplest planar case, it claims that the diagonals of a curvilinear quadrilateral made by arcs of confocal ellipses and  hyperbolas are equal.

In the first part of this paper, we deduce Ivory's Lemma and its numerous generalizations from complete integrability of billiards on conics and quadrics. In the second part, we study analogs of Ivory's Lemma in Liouville and St\"ackel metrics. Our main focus is on the results of the German school of differential geometry obtained in the late 19 -- early 20th centuries that might be lesser know today.

In the third part, we generalize Newton's, Laplace's, and Ivory's theorems on gravitational and Coulomb potential of spheres and ellipsoids to the spherical and hyperbolic spaces. V. Arnold extended the results of Newton,  Laplace, and Ivory  to  algebraic hypersurfaces in Euclidean space; we generalize Arnold's theorem to the spaces of constant curvature. 
\end{abstract}

\newpage

\tableofcontents

\section{Introduction} \label{intro}

Theorems XXX and  XXXI of I. Newton's ``Principia" assert that {\it the gravitational field created by a spherical shell is zero in the region bounded by the shell, whereas, in the exterior region, the field is the same as the one created by the total mass of the shell concentrated at its center.}

P.-S. Laplace extended Newton's theorem to ellipsoids.
A \emph{homeoid} is the domain bounded by two homothetic ellipsoids with a common center.
Laplace proved the following theorem: {\it the gravitational field of a homogeneous homeoidal shell equals zero in the region bounded by the shell. If the shell is infinitely thin, then the equipotential surfaces in its exterior are the confocal ellipsoids.}

Laplace's proof was computational. J. Ivory's gave a different proof  \cite{Iv09} that used a geometric argument based on the lemma that now carries his name. 

Let $E_1$ and $E_2$ be origin-centered confocal ellipsoids in $\R^3$, and let $A$ be a linear map that takes $E_1$ to $E_2$. Let $P_1$ and $Q_1$ be points of $E_1$, and $P_2=A(P_1), Q_2=A(Q_2)$ be the corresponding points of $E_2$. The statement of Ivory's Lemma is as follows: $|P_1 Q_2|=|Q_1 P_2|$.

Ivory's Lemma is valid in all dimensions. In the simplest case of dimension two, the pairs of corresponding points lie on confocal hyperbolas, and the statement can be formulated as follows: {\it the diagonals of a quadrilateral made by arcs of confocal ellipses and hyperbolas are equal}, see Figure \ref{Ivst}.

\begin{figure}[hbtp]
\centering
\includegraphics[height=1.8in]{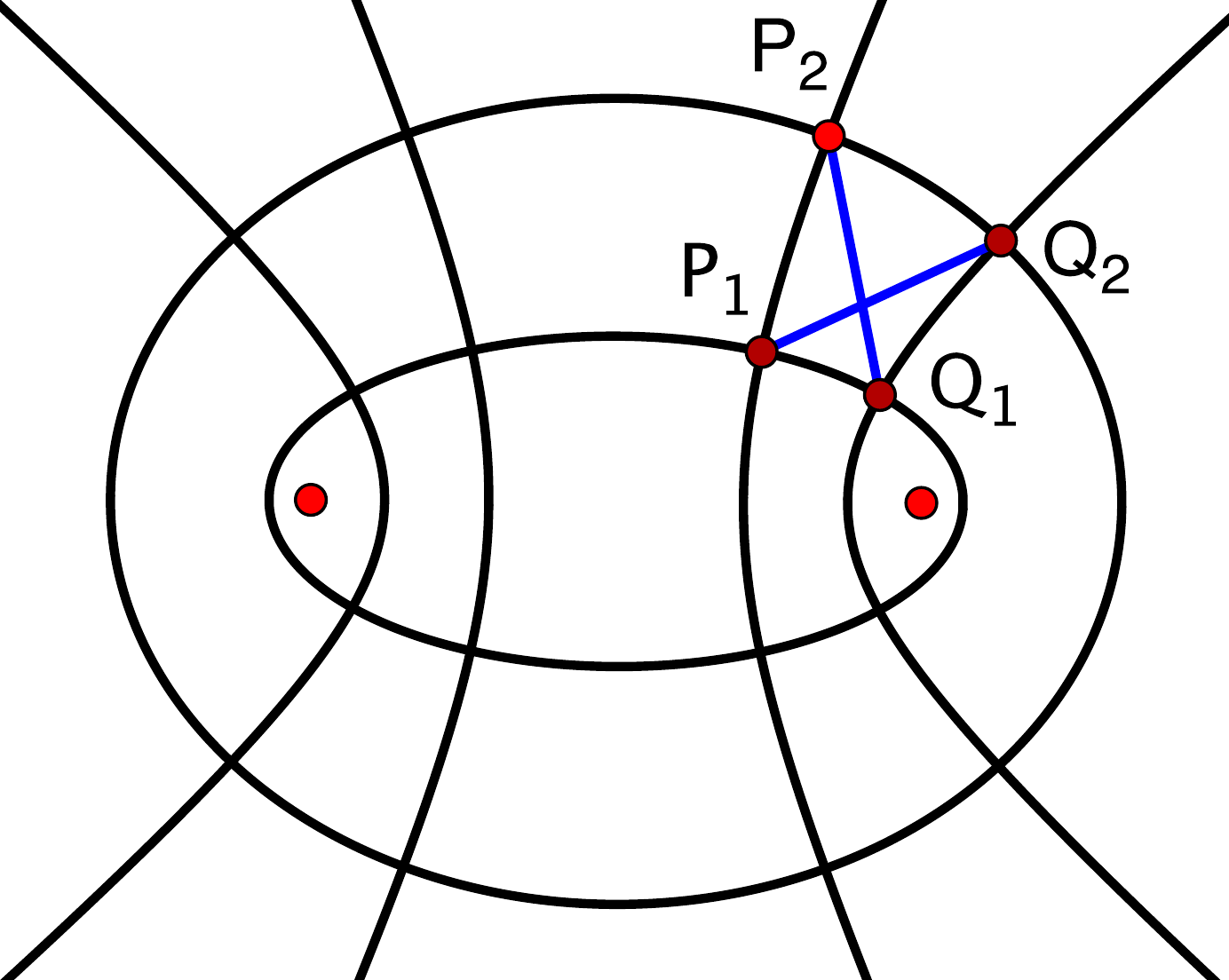}
\caption{Ivory's Lemma: $|P_1 Q_2|=|Q_1 P_2|$}
\label{Ivst}
\end{figure}

The theorems of Newton and of Laplace, and Ivory's Lemma, have numerous generalizations, old and new.  The reader interested in the history of this topic is referred to \cite{Tod}. 

This paper consists of three main parts. 

In Section \ref{bconics}, we relate two subjects: Ivory's Lemma  and billiard dynamics in domains bounded by quadrics. In particular, in Theorem \ref{IvGenPlane}, we deduce planar Ivory's Lemma from complete integrability of billiards bounded by confocal conics. In this approach, Ivory's Lemma follows from a version of the Poncelet Porism (discovered at about the same time, in 1813). 
This approach extends to numerous generalizations of Ivory's Lemma, including its 
multi-dimensional versions in the spherical and hyperbolic geometries. 

One of our main inspirations in Section \ref{bconics} were recent results of A. Akopyan and A. Bobenko \cite{AB15} on the nets of lines whose quadrilaterals admit inscribed circles. In this direction, our billiard approach gives a proof of a theorem of Reye and Chasles (Theorem \ref{inscribed}) and provides a configuration of circles associated with a periodic billiard trajectory in an ellipse (Figure \ref{grid1}). 

Section \ref{LSmetr} concerns more general metrics, Liouville (in dimension 2) and St\"ackel (in higher dimensions), in which an analog of Ivory's Lemma holds. Our main goal here is to bring back to the contemporary reader somewhat lesser-known results of the German school obtained in the late 19th and early 20th centuries (Blaschke, St\"ackel, Weihnacht, Zwirner).

One of these results (Theorem \ref{thm:LiouvIvory}), due to  Blaschke and Zwirner, is  
that the Ivory property (the diagonals of the coordinate quadrilaterals have equal geodesic lengths) is equivalent to the metric having a Liouville form. In Section \ref{sec:StBill}, we return back to billiards and show that the billiard  bounded by coordinate hypersurfaces of a St\"ackel metric is  integrable.

Section \ref{NIcurv} concerns generalizations of Newton's and Ivory's theorems to the spherical and hyperbolic spaces. This subject was relatively recently investigated by V. Kozlov \cite{Koz00}.

We define gravitational (or Coulomb) potential of a point as the function that is harmonic and rotationally invariant. We prove the spherical and hyperbolic version of Newton's theorem (Theorem \ref{thm:NewtonSph}) by a geometric argument, close to Newton's original one.  

Next, we define a homeoid in the $n$-dimensional spherical and hyperbolic space as the shell between two level sets of a quadratic form defined in the ambient $n+1$-dimensional space. Theorem \ref{LIconst} provides a spherical and hyperbolic version of the Laplace theorem on the potential of a homeoid. Our arguments are again geometric and close to the proof of the Laplace theorem given by Ivory and Chasles. 

The theorems of Newton and Laplace were extended by V. Arnold \cite{Arn82,Arn83} to algebraic hypersurfaces in $\R^n$. In Theorem \ref{Arnconst}, we generalize Arnold's result to the spherical and hyperbolic spaces. 

Let us mention another generalization of Newton's and Ivory's theorems, to magnetic fields and quadrics of all signatures, which also goes back to Arnold \cite{Arn84,VS85}. We believe that these results should also have spherical and hyperbolic versions.
\bigskip

{\bf Acknowledgments}. We are grateful to A. Akopyan, A. Bobenko, V. Dragovic, 
D. Khavinson, E. Lundberg, Yu. Suris, A. Veselov for stimulating discussions 
and advice. Part of this work was done at ICERM, Brown University, during the 
second's author 2-year stay there and the first author  visit at the institute. 
We are grateful to ICERM for its inspiring  and encouraging atmosphere. 
The second author was supported by the NSF grant  DMS-1510055.

\section{Billiards, conics, and quadratic surfaces} \label{bconics}

\subsection{Billiards in confocal conics} \label{bconf}

In this section, we recall some basic facts about billiards and conics; see, e.g., \cite{DR11,FT07,KT91,Tab95,Tab05} and, specifically, \cite{LT07}.

We consider billiards as a discrete-time dynamical system acting on oriented lines: an incoming billiard trajectory hits the billiard curve and reflects so that the angle of incidence equals the angle of reflection. Equivalently, one may think in terms of  geometrical optics: oriented lines are rays of light, and the billiard curve is an ideal mirror.

\subsubsection{Invariant area form} \label{invarea}
 
The space of oriented lines has an area form that is preserved by the optical reflections (independently of the shape of the mirror). 

Choose an origin, and introduce coordinates $(\alpha,p)$ on the space of rays: $\alpha$ is the direction of the ray, and $p$ is its signed distance to the origin, see Figure \ref{coord}. Then the invariant area form is as follows: $\omega=d\alpha\wedge d p$.

\begin{figure}[hbtp]
\centering
\includegraphics[height=1.2in]{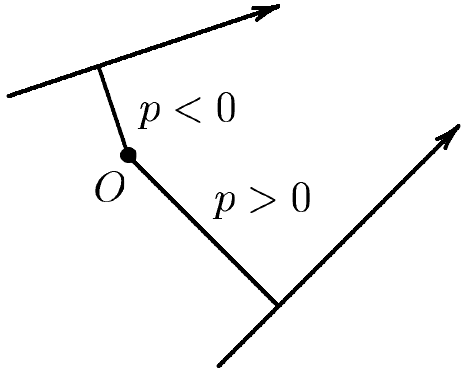}
\caption{Coordinates on the space of rays}
\label{coord}
\end{figure}

This symplectic structure is obtained by symplectic reduction from the canonical symplectic structure of the cotangent bundle $T^* \R^2$. This construction is quite general, and it yields a symplectic structure on the space of oriented non-parameterized geodesics of a Riemannian manifold (assuming that this space is a smooth manifold). For example, this is the case in the spherical and hyperbolic geometries. See, e.g., \cite{Ar89,Tab95,Tab05} for details.

\subsubsection{Caustics and string construction} \label{caust}
A caustic of a billiard is a curve $\gamma$ with the following property: if a segment of a billiard trajectory is tangent to $\gamma$ then so is each reflected segment.  

Consider an oval (closed smooth strictly convex curve) and fix a point $C$ on it. For a point $X$ outside of the oval, consider two functions: 
$$
f(X)=|XA|+\stackrel{\smile}{|AC|},\ g(X)=|XB|+\stackrel{\smile}{|BC|},
$$
see Figure \ref{string}.

\begin{figure}[hbtp]
\centering
\includegraphics[height=2in]{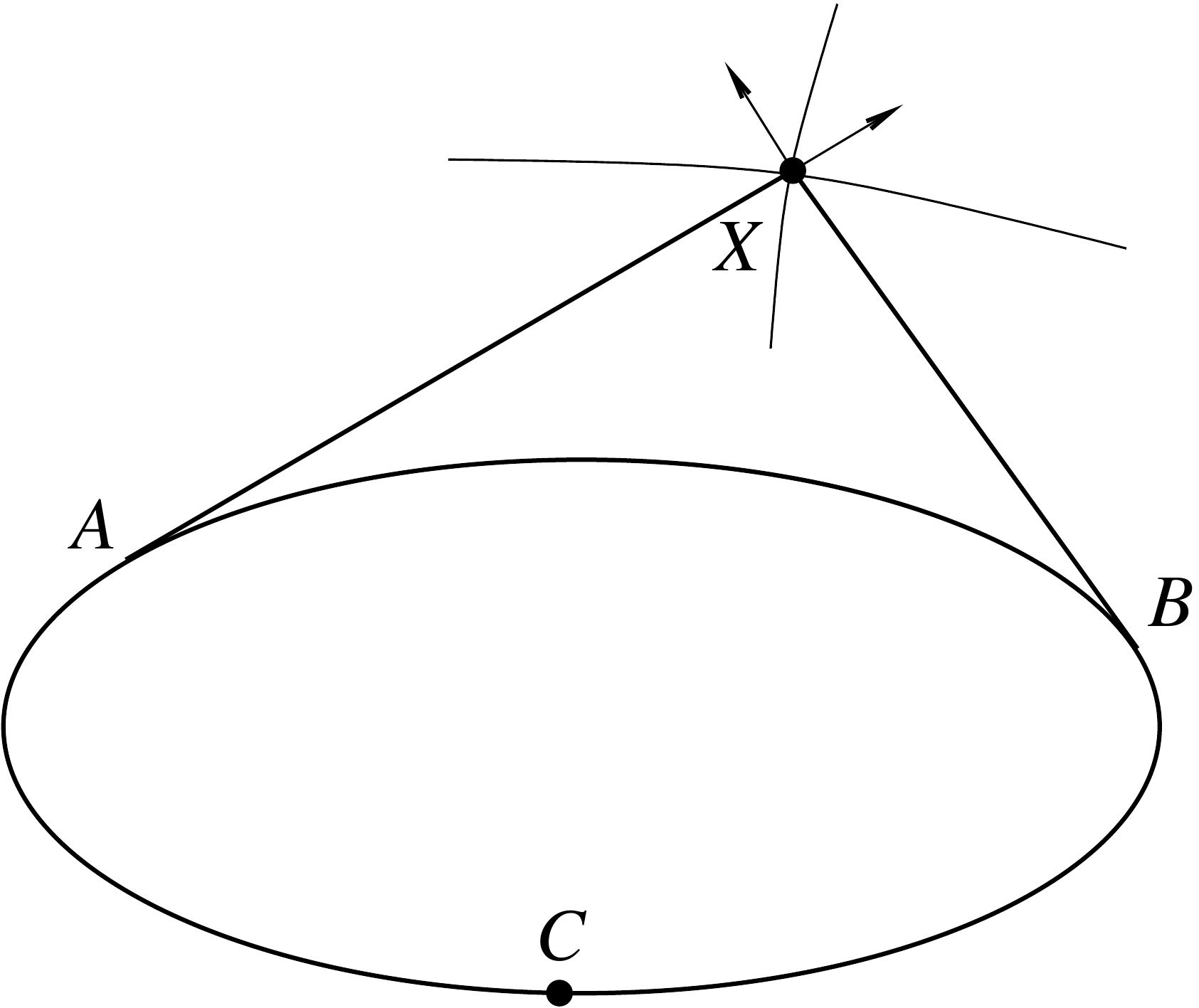}
\caption{String construction}
\label{string}
\end{figure}

The gradients of these functions are the unit vectors along the lines $AX$ and $BX$, respectively. It follows that these two lines make equal angles with the level curves of the functions $f+g$ and $f-g$, and that these level curves are orthogonal to each other.

The function $f+g$ does not depend on the choice of the reference point $C$. Its level curves are given by the string construction: wrap a closed nonelastic string around an oval, pull it tight at a point, and move this point around the oval. This  construction recovers a 1-parameter family of billiard curves from a common caustic (the length of the string is a parameter).

\subsubsection{Billiard properties of conics} \label{conics}

The interior of an ellipse is foliated by confocal ellipses. These are caustics of the billiard inside an ellipse, see, e.g., \cite[Theorem 28.2]{FT07}. Thus one has Graves's theorem: wrapping a closed nonelastic string around an ellipse produces a confocal ellipse. Since confocal ellipses and hyperbolas are orthogonal, an ellipse is also a caustic for reflection in a confocal hyperbola, see Figure \ref{reflection}.

\begin{figure}[hbtp]
\centering
\includegraphics[height=2in]{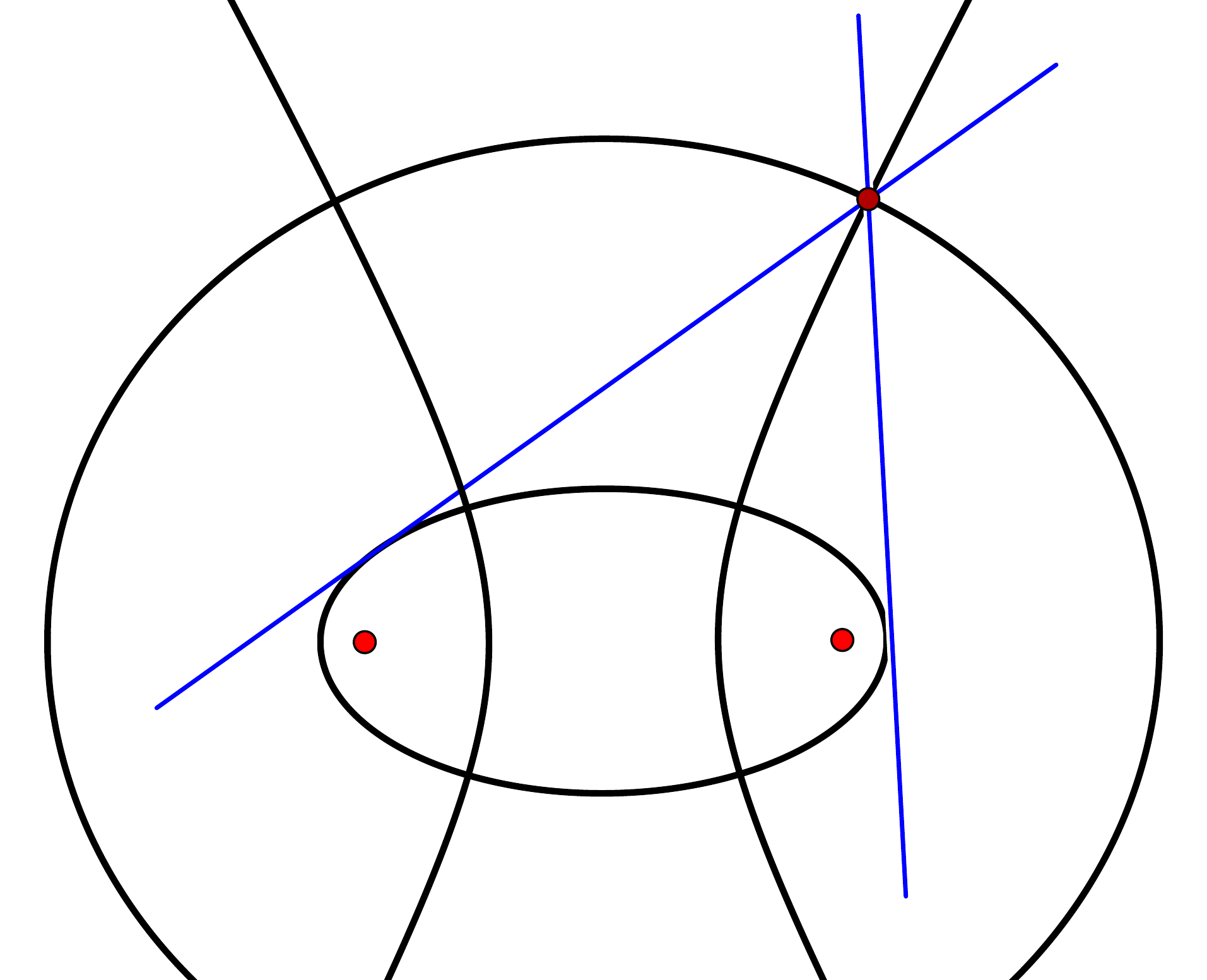}
\caption{Reflection in confocal conics}
\label{reflection}
\end{figure}

If a ray passes between the foci of an ellipse then it is tangent to a confocal hyperbola, and all the reflected rays are tangent to the same confocal hyperbola which, in this case, is a caustic.

\subsubsection{Complete integrability and its consequences} \label{complint}
The space of rays $A$ that intersect an ellipse  is topologically a cylinder, and the billiard system inside the ellipse is an area preserving transformation $T: A\to A$. The cylinder is foliated by the invariant curves of the map $T$ consisting of the rays tangent to confocal conics, see Figure \ref{portrait}. 

\begin{figure}[hbtp]
\centering
\includegraphics[height=1.7in]{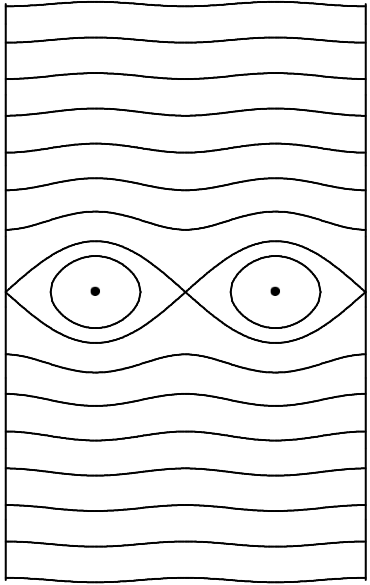}
\caption{Phase portrait of the billiard map in an ellipse}
\label{portrait}
\end{figure}

The curves that go around the cylinder correspond to the rays that are tangent to confocal ellipses, and the curves that form `the eyes' to the rays that are tangent to confocal hyperbolas. A singular curve consists of the rays through the foci, and the two dots to the 2-periodic back and forth orbit along the minor axis of the ellipse.

One can choose a cyclic parameter, say, $x$ modulo 1, on each invariant curve such that the map $T$ becomes a shift $x \mapsto x+c$, where the constant $c$ depends on the invariant curve. This is a manifestation of the Arnold-Liouville theorem in the theory of completely integrable systems, see \cite{Ar89}.

The construction is as follows. Choose a function $H$  whose level curves are the invariant curves that foliate $A$, and consider its Hamiltonian vector field sgrad $H$ with respect to the area form $\omega$. This vector field is tangent to the invariant curves, and the desired coordinate $x$ on these curves is the one in which sgrad $H$ is a constant vector field $d/dx$. Changing $H$ scales the coordinate $x$ on each invariant curve and, normalizing the `length' of the invariant curves to 1, fixes $x$ uniquely up to an additive constant. In other words, the 1-form $dx$ is well defined on each invariant curve. 

The billiard map $T$ preserves the area form and the invariant curves, therefore its restriction to each curve preserves the measure $dx$, hence, is a shift $x \mapsto x+c$.

An immediate consequence is the Poncelet Porism: if a billiard trajectory in an ellipse closes up after a number of reflections then all trajectories with the same caustic close up after the same number of reflections (the general form of the Poncelet Porism is obtained by applying a projective transformation to a pair of confocal conics). 

Note that the invariant measure $dx$ on the invariant curves does not depend on the choice of the billiard curve from 
a confocal family: all confocal ellipses share their caustics. This implies that the billiard transformations with respect to two confocal ellipses commute: restricted to a common caustic, both are shifts in the same coordinate system. See Figure \ref{commute}.

\begin{figure}[hbtp]
\centering
\includegraphics[height=2in]{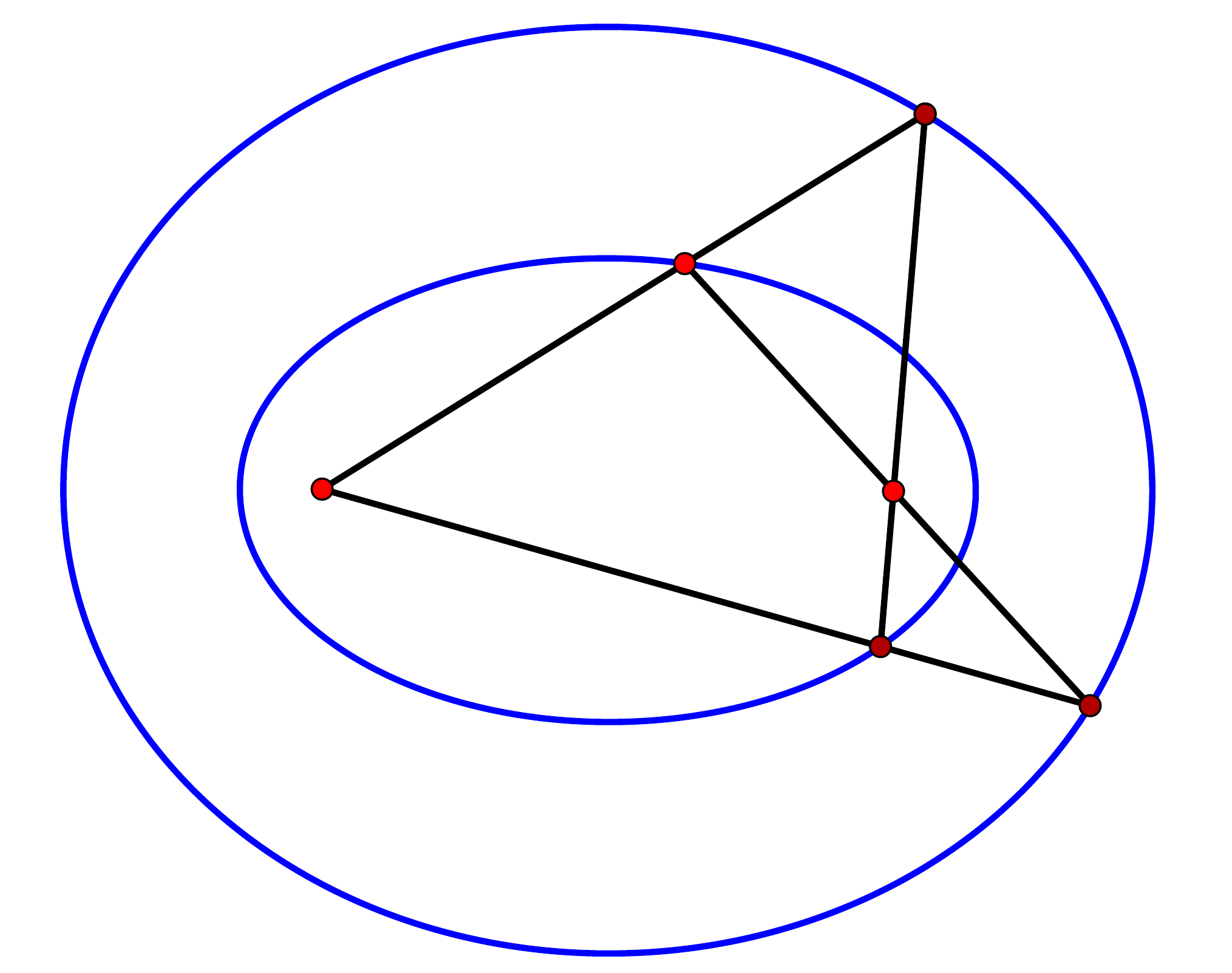}
\includegraphics[height=2in]{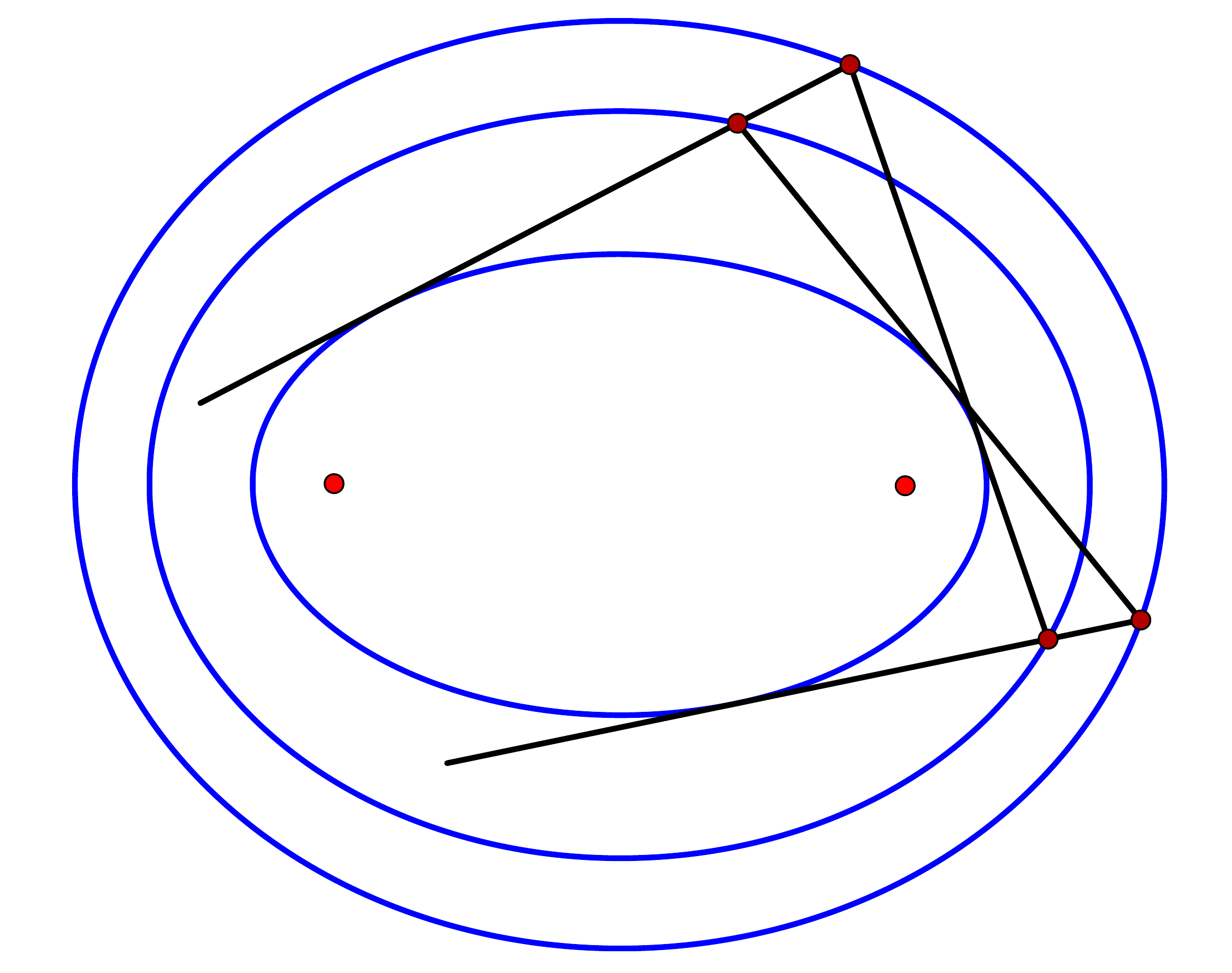}
\caption{Left: the billiard reflections of the rays from a focus in two confocal ellipses commute. Right: the general case.}
\label{commute}
\end{figure}

We identify the invariant curves with the respective conics and  refer to the coordinate $x$ as the canonical coordinate.

\subsubsection{Coordinates in the exterior of a conic} \label{coordext}
Consider an ellipse $\gamma$, and let $x$ be the canonical  coordinate on it. This makes it possible to define coordinates in the exterior of the ellipse: the coordinates of a point $X$ outside of $\gamma$ are the coordinates $x_1$ and $x_2$ of the tangency points of the tangent lines from $X$ to $\gamma$ (points $A$ and $B$ in Figure \ref{string}).

The above discussion implies that the confocal ellipses are given by the equations $x_2-x_1=$ const. Not surprisingly, the confocal hyperbolas have the equations $x_2+x_1=$ const. We repeat an argument from \cite{LT07} here.

\begin{figure}[hbtp]
\centering
\includegraphics[height=2.3in]{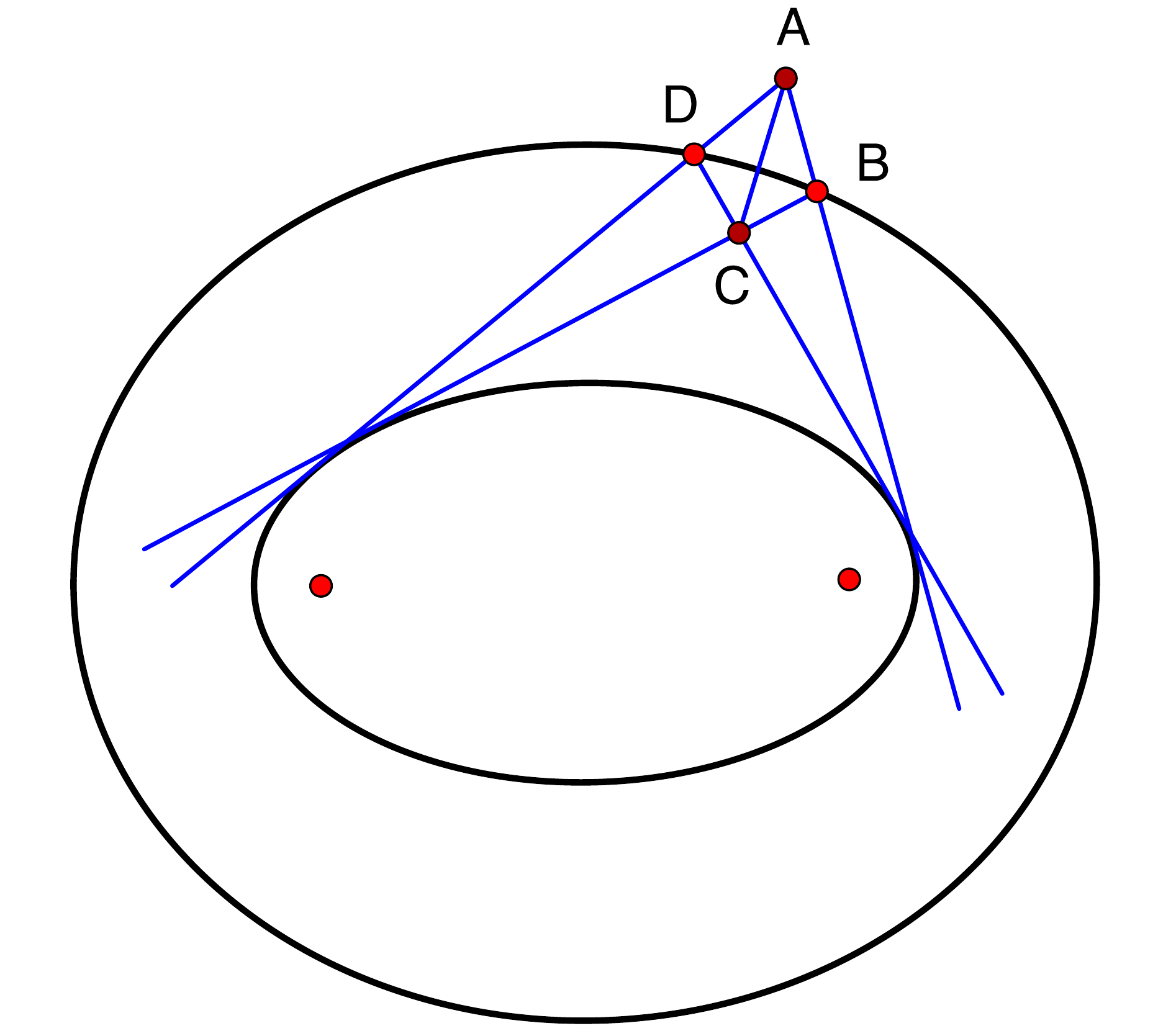}
\caption{Proving that points $A$ and $C$ lie on a confocal hyperbola.}
\label{coordinates}
\end{figure}

Let the coordinates of the tangency points on the inner ellipse, from left to right, be $x_1,x_2,x_3,x_4$, so that 
$$
A(x_1,x_4),\ B(x_2,x_4),\ C(x_2,x_3),\ D(x_1,x_3),
$$
see Figure \ref{coordinates}. Since $B$ and $D$ are on a confocal ellipse, $x_4-x_2=x_3-x_1$, and hence $x_2+x_3=x_1+x_4$.

By the billiard property, the arc of an ellipse $BD$ bisects the angles $ABC$ and $ADC$. Therefore, in the limit  $D\to B$, the infinitesimal quadrilateral $ABCD$ becomes a kite: the diagonal $BD$ is its axis of symmetry. Hence $AC \perp BD$, and the locus of points given by the equation $x_1+x_4=$ const and containing points $A$ and $C$  is orthogonal to the ellipse through points $B$ and $D$. Therefore this locus is a confocal hyperbola.

Let us summarize. Consider Figure \ref{reflection} again. In the coordinate $x$ on the inner ellipse, the billiard  reflections in a confocal ellipse and a confocal hyperbola are given, respectively, by the formulas
\begin{equation}
\label{tworefl}
x \mapsto x + a,\quad x\mapsto b-x,
\end{equation}
where the constants $a$ and $b$ depend on the choice of the outer ellipse and the hyperbola.

\subsection{Ivory's Lemma in the plane} \label{plane}
\subsubsection{Proof by billiards} \label{pfbill}
Recall the statement of Ivory's Lemma: the diagonals of a quadrilateral made by arcs of confocal ellipses and hyperbolas are equal: in Figure \ref{IvL}, $|AC|=|BD|$.

The idea of the proof is very simple. Imagine that we need to prove that the diagonals of a rectangle are equal.
Let us consider a diagonal as  a 4-periodic billiard trajectory.

\begin{figure}[hbtp]
\centering
\includegraphics[height=0.8in]{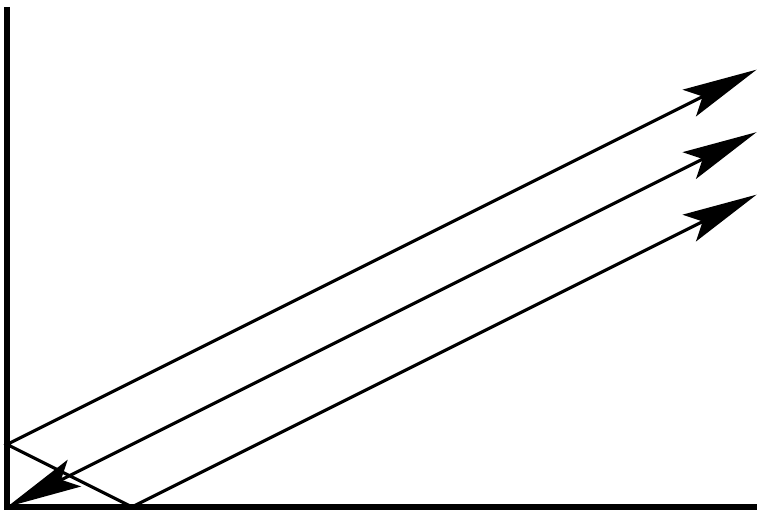}
\caption{Reflection in a right angle}
\label{corner}
\end{figure}

In general, a billiard trajectory that hits a corner cannot be continuously extended, but if the angle is $90^{\circ}$, such an extension is possible. Indeed, an infinitesimally close parallel trajectory, entering an angle, makes two reflections and exits in the opposite direction, see Figure \ref{corner}.  This holds both for the trajectories on the right and on the left of the `dangerous' trajectory that goes directly to the corner. This makes it possible to define reflection in a right angle as the direction reversal of the ray.

To conclude that the diagonals are equal, include a diagonal into a 1-parameter family of 4-periodic trajectories, interpolating between the two diagonals, and observe that these trajectories have
the same perimeter lengths, see Figure \ref{rectangle}. 

\begin{figure}[hbtp]
\centering
\includegraphics[height=0.8in]{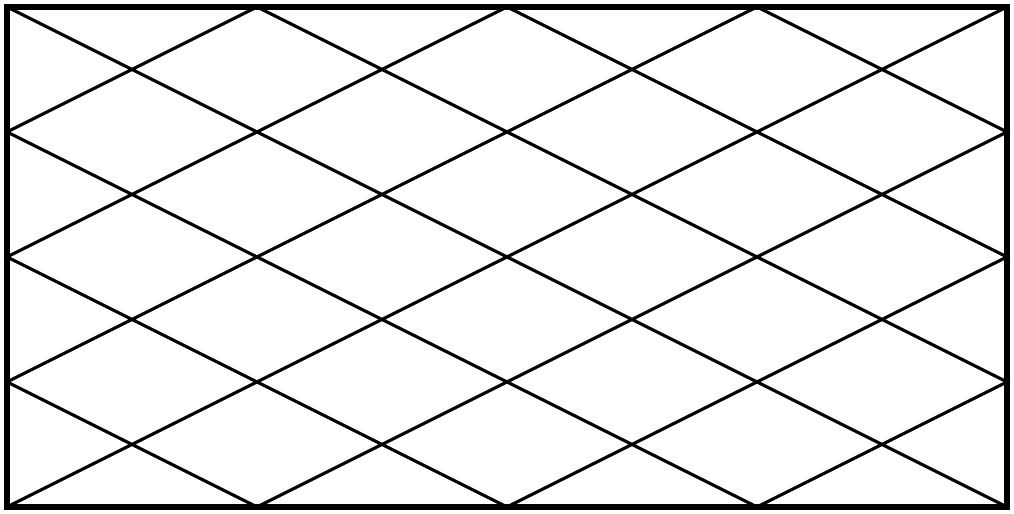}
\caption{The diagonals of a rectangle include in a family of 4-periodic billiard trajectories}
\label{rectangle}
\end{figure}

We use a similar argument with  the curvilinear quadrilateral $ABCD$ made of confocal conics. The following porism implies Ivory's Lemma.

\begin{theorem} \label{IvGenPlane}
Let $\gamma$ be the  conic from the confocal family that is tangent to the line $BD$ (the inner ellipse in Figure \ref{IvL}). There exists a 1-parameter family of 4-periodic billiard trajectories in the quadrilateral $ABCD$, interpolating between the diagonals $BD$ and $AC$, and consisting of rays tangent to $\gamma$ (such as the quadrilateral $PQRS$ in Figure \ref{IvL}). In particular, the line $AC$ is also tangent to $\gamma$.
These 4-periodic trajectories have the same perimeter lengths, and hence, $|AC|=|BD|$.
\end{theorem}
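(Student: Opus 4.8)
The plan is to work in the canonical coordinate $x$ on the inner conic $\gamma$, introduced in Section~\ref{complint}, together with the exterior coordinates $(x_1,x_2)$ on points outside $\gamma$ described in Section~\ref{coordext}. Recall from that discussion that the sides of the curvilinear quadrilateral $ABCD$ consist of arcs of confocal conics, hence by \eqref{tworefl} the reflection in the arc through $B$ and $D$ (a confocal ellipse) acts on $x$ as a shift $x\mapsto x+a$, while reflection in the confocal hyperbola arcs acts as a flip $x\mapsto b-x$. First I would fix the two tangent lines from a generic point to $\gamma$ and track their tangency parameters as the point traverses each of the four sides in turn; the key observation, already established in Section~\ref{coordext}, is that each side of the quadrilateral is a level set of either $x_2-x_1$ (for the elliptic sides) or $x_2+x_1$ (for the hyperbolic sides).

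Next I would run the billiard: start a ray tangent to $\gamma$ from a point $P$ on the side $AB$, reflect off the four sides of $ABCD$ in cyclic order, and compute the composition of the four maps on the canonical coordinate. Two reflections in confocal ellipses compose to a shift $x\mapsto x+2a'$ and two reflections in confocal hyperbolas compose to a shift as well; interleaving them as they occur around $ABCD$, the total return map on each invariant curve (= confocal conic, here $\gamma$) is a translation $x\mapsto x+c$ for a constant $c$ depending only on the sides, not on the starting ray. The crucial point is that $c=0$: the diagonal $BD$ itself is a closed $4$-periodic trajectory in this sense (using the convention that reflection at a vertex where two confocal arcs meet is the direction-reversal described in the rectangle model, since confocal conics meet orthogonally), so the return map fixes the corresponding parameter value, hence $c=0$ and \emph{every} ray tangent to $\gamma$ launched from a point of $AB$ closes up into a $4$-periodic trajectory inscribed in $ABCD$. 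This is exactly the Poncelet-type porism: the family $PQRS$ interpolates between $BD$ (when the trajectory degenerates onto that diagonal) and $AC$ (the other degeneration), and in particular $AC$ is tangent to $\gamma$.

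Finally, to get equal lengths, I would invoke the string-construction/perimeter argument from Section~\ref{caust}: the quantity $f+g$ (total "string length" relative to the caustic $\gamma$) is constant along the family of $4$-periodic trajectories with caustic $\gamma$, because at each reflection the incremental change contributed by one side is governed by the level-set function of that side and the optical reflection law makes consecutive contributions cancel in pairs (this is the standard proof that all Poncelet polygons with a fixed caustic have the same perimeter, cf.\ the invariant measure $dx$ being independent of the billiard curve). Since the perimeter of the degenerate trajectory along $BD$ is $2|BD|$ and that along $AC$ is $2|AC|$, we conclude $|AC|=|BD|$.

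The main obstacle is making the vertex reflections rigorous: at each corner of $ABCD$ two confocal arcs meet at a right angle, and one must justify — as in the rectangle warm-up with Figure~\ref{corner} — that an infinitesimally nearby trajectory tangent to $\gamma$ enters the corner, undergoes two ordinary reflections, and leaves in the reversed direction, so that the "$4$-periodic trajectory" through a corner is a genuine limit of honest billiard trajectories and the perimeter functional extends continuously to it. Equivalently, one must check that the composition of the four side-reflections, written in the canonical coordinate, is well-defined and continuous across the parameter values corresponding to rays through the vertices; this follows from the orthogonality of confocal conics together with the shift/flip normal forms \eqref{tworefl}, but it is the step that requires care rather than routine computation.
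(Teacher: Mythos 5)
Your proposal follows essentially the same route as the paper: write the four reflections in the canonical coordinate on $\gamma$ as two shifts and two flips via \eqref{tworefl}, observe that their composition is a translation, use the diagonal $BD$ (with the right-angle corner reflections interpreted as direction reversal) as a fixed point to force that translation to be the identity, and conclude that the perimeter is constant along the resulting family. The one place where your argument is weaker than the paper's is the last step: you justify the constant perimeter by appealing to the string construction and a pairwise cancellation of contributions of the sides, but the string construction concerns a single billiard curve, whereas here the vertices of $PQRS$ lie on four \emph{different} confocal conics, so that mechanism does not directly apply. The paper instead uses the clean variational argument --- an $n$-periodic billiard trajectory is a critical point of the perimeter functional $L$ on inscribed $n$-gons (regardless of which boundary curve each vertex lies on), so $dL$ vanishes along any connected family of periodic trajectories and $L$ is constant --- and you should replace your cancellation heuristic by this observation to close the argument.
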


\begin{figure}[hbtp]
\centering
\includegraphics[height=2.3in]{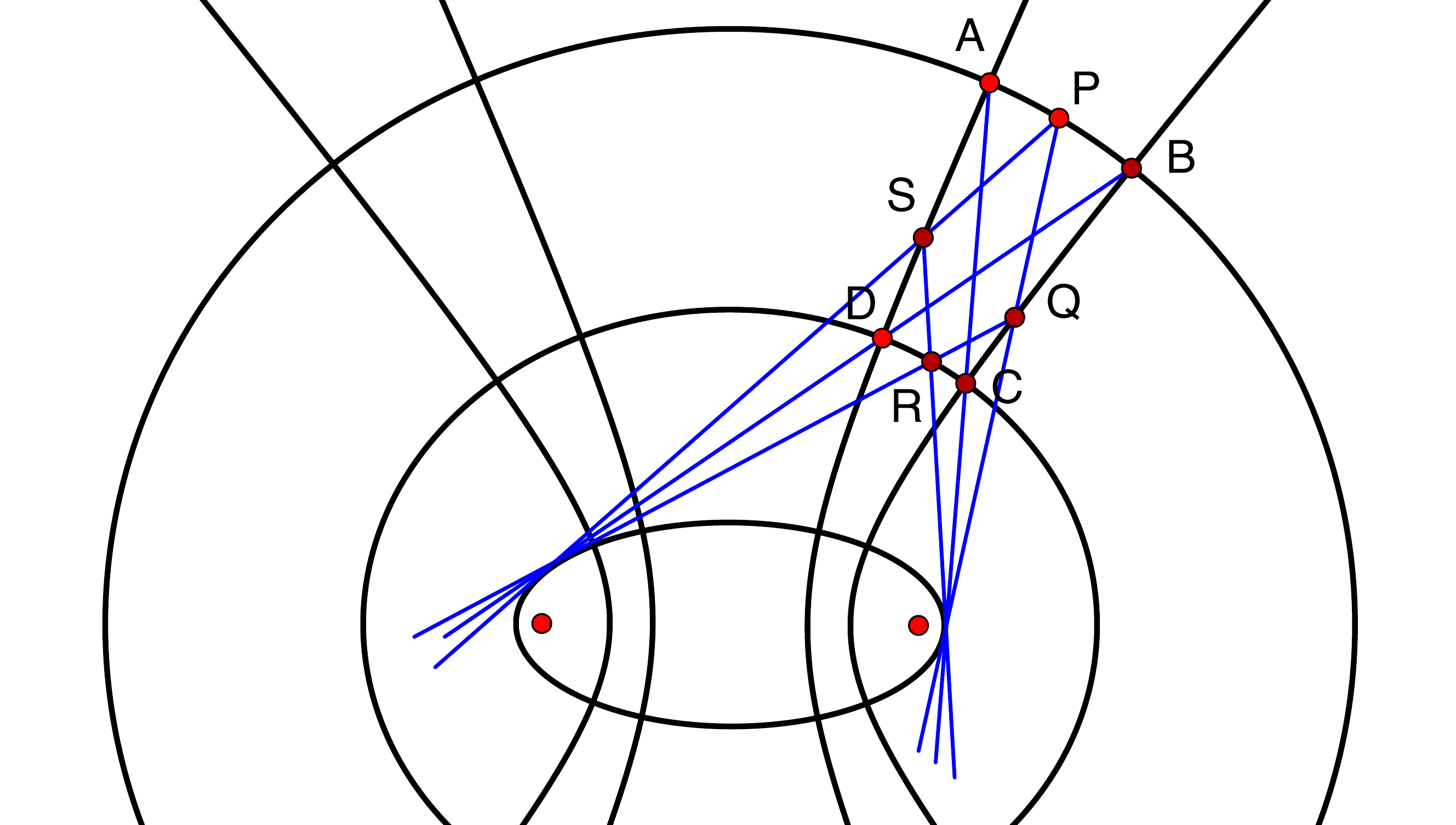}
\caption{Proof of Ivory's Lemma}
\label{IvL}
\end{figure}

\proof We consider the case when the line $BD$ is tangent to a confocal ellipse, as in Figure \ref{IvL}. The case of a confocal hyperbola is similar, and the intermediate case when the line passes through a focus is obtained as a limit.

Consider a ray tangent to $\gamma$ and its four consecutive reflections in the sides of the quadrilateral $ABCD$. According to the discussion in Section \ref{bconics}, each reflected ray is again tangent to $\gamma$.  

Let $x$ be the canonical coordinate on $\gamma$. 
According to (\ref{tworefl}), these reflections are given by formulas 
$$
x \mapsto x + a,\quad x\mapsto b-x, \quad x \mapsto x + c,\quad x\mapsto d-x,
$$
where the constants depend on the conics that form the quadrilateral. 

The composition of these maps is a shift $x\mapsto x+(a-b-c+d)$, and if it has a fixed point then it is the identity. But the 4-periodic trajectory $BD$ provides a fixed point, whence a 1-parameter family of 4-periodic trajectories.

An $n$-periodic billiard trajectory is a critical point of the perimeter length function $L$ on the space of inscribed $n$-gons. A 1-parameter family of such trajectories is a curve consisting of critical points of $L$. It follows that $dL$ vanishes on this curve, and hence the value of $L$ remains constant.
\proofend

\subsubsection{Inscribed circles} \label{inscrcirc}
Using results from Section \ref{bconics}, we obtain the following theorem that goes back to Reye and Chasles, see also \cite{AB15}.

\begin{theorem} \label{inscribed}
Let $A$ and $B$ be two points on an ellipse. Consider  the quadrilateral $ABCD$, made by the pairs of tangent lines from $A$ and $B$ to a confocal ellipse. Then its other vertices, $C$ and $D$, lie on a confocal hyperbola, and the quadrilateral 
 is circumscribed about a circle,  see Figure \ref{circlequad}.
\end{theorem}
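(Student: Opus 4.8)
The statement splits into two claims, which I would treat in turn. Write $\gamma$ for the confocal ellipse appearing in the statement. For the claim that $C$ and $D$ lie on a confocal hyperbola I would use the exterior coordinate system of Section~\ref{coordext}. Label the four tangent lines from $A$ and $B$ by the canonical coordinates $t_1\le t_2\le t_3\le t_4$ of their tangency points with $\gamma$, taken in cyclic order; reading off the figure, the vertices of the quadrilateral are then $A=(t_1,t_2)$, $C=(t_2,t_3)$, $B=(t_3,t_4)$, $D=(t_1,t_4)$ in these coordinates (the two tangent lines through $A$ touch $\gamma$ at $t_1,t_2$, the two through $B$ at $t_3,t_4$). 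By Section~\ref{coordext}, ``$A$ and $B$ lie on a common confocal ellipse'' is the relation $t_2-t_1=t_4-t_3$; this is equivalent to $t_2+t_3=t_1+t_4$, and by the same section this last relation says exactly that $C$ and $D$ lie on a common confocal hyperbola. One should also check the combinatorial type of the quadrilateral — which of the four tangent lines supports which side — but this is clear from the picture.

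For the existence of the inscribed circle I would invoke Pitot's theorem: a convex quadrilateral is circumscribed about a circle if and only if the sums of lengths of opposite sides agree, here $|AC|+|BD|=|CB|+|DA|$. The key tool is a ``power of a point'' identity for conics: if $\gamma$ is the zero set of $Q(w)=w^{\top}Mw-1$ with $M=\diag(1/\alpha_1,1/\alpha_2)$, so that $Q>0$ outside $\gamma$, and if $X$ lies outside $\gamma$ on the tangent line touching $\gamma$ at the point $T$ with eccentric angle $\theta$, then
$$|XT|^2 = Q(X)\,g(\theta),\qquad g(\theta)=\alpha_1\sin^2\theta+\alpha_2\cos^2\theta,$$
which one gets by writing the line as $X+tv$ with $v$ a unit direction vector and evaluating the quadratic $t\mapsto Q(X+tv)$ at its double root. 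Since the tangency point on each side of $ACBD$ lies between the side's endpoints, the side $AC$ has length $|AT_{AC}|+|T_{AC}C|=\bigl(\sqrt{Q(A)}+\sqrt{Q(C)}\bigr)\sqrt{g(\theta_{AC})}$, and likewise for the other three sides. Substituting the expression $Q(X)=-\mu_1(X)\mu_2(X)/(\alpha_1\alpha_2)$ in terms of the confocal coordinates of $X$, and using the constraints from the first part ($A,B$ share the ellipse parameter $\lambda_E$, and $C,D$ share the hyperbola parameter $\lambda_F$), turns Pitot's equality into an algebraic identity among the confocal parameters of the four vertices and of the four tangency points.

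Carrying out that last verification — expressing the four numbers $g(\theta_i)$ through the confocal data of the adjacent vertices and performing the cancellation, while keeping track of signs and of which tangent direction from each vertex supports which side — is the step I expect to be the main obstacle. I would organize it with the billiard picture of Section~\ref{bconics}: since $\gamma$ is a common caustic for reflections in confocal ellipses and in confocal hyperbolas (Section~\ref{conics}), the boundary polygon of $ACBD$ is a $4$-periodic billiard trajectory reflecting off $E$ at $A,B$ and off $F$ at $C,D$, all of whose segments are tangent to $\gamma$; by the reflection formulas~(\ref{tworefl}) in the canonical coordinate, the composition of the four reflections off $E,F,E,F$ is the identity, so — exactly as in the proof of Theorem~\ref{IvGenPlane} — $ACBD$ sits inside a one-parameter family of such quadrilaterals of equal perimeter, and one can carry the length formula along this family. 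A more synthetic alternative I would also pursue is to show directly that the four internal angle bisectors of $ACBD$ concur at the incenter: by the optical property of the caustic $\gamma$, the internal bisector at $A$ (resp.\ $B$) is the line through that vertex normal to $E$, and at $C$ (resp.\ $D$) the line normal to $F$; hence it suffices to prove that three of these four normals — say the normal to $E$ at $A$ together with the normals to $F$ at $C$ and at $D$ — pass through one point $I$, after which $d(I,AC)=d(I,AD)$, $d(I,CA)=d(I,CB)$ and $d(I,DA)=d(I,DB)$ force all four distances to the sides to coincide, giving the incircle. The content in either approach is the same single confocal identity, which is where the real work lies.
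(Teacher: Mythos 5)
Your first claim ($C$ and $D$ lie on a confocal hyperbola) is proved exactly as in the paper: the exterior coordinates of Section~\ref{coordext} turn ``$A,B$ on a confocal ellipse'' into $t_2-t_1=t_4-t_3$, hence $t_2+t_3=t_1+t_4$, which is the equation of a confocal hyperbola through $C$ and $D$. That part is fine. You also correctly identify the second claim with Pitot's condition $|AD|-|AC|+|BC|-|BD|=0$. The problem is that this identity is never actually established. Each of your three routes stalls at the same place, and you say so yourself (``the step I expect to be the main obstacle'', ``where the real work lies''). Concretely: (a) the power-of-a-point formula $|XT|^2=Q(X)g(\theta)$ is correct, but the resulting four-term cancellation in the confocal parameters is a genuine multivariable identity, and nothing in the proposal indicates how it closes; (b) the billiard family of 4-periodic trajectories tangent to $\gamma$ conserves the \emph{total} perimeter $|AC|+|CB|+|BD|+|DA|$, which is the wrong symmetric function --- constancy of the sum along a deformation gives no information about the alternating sum, and unlike Theorem~\ref{IvGenPlane} there is no degenerate member of the family where Pitot is evident; (c) the concurrence of three of the four normals is logically equivalent to the existence of the incircle, so nothing is gained. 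As written, the circumscribability claim is unproven.

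The missing idea is the pair of string-construction functions $f,g$ of Section~\ref{caust}, taken with respect to the inner ellipse $\gamma$ and a reference point on it. Their gradients are the unit vectors along the two tangent lines from a point, so for two vertices joined by a side tangent to $\gamma$ the side length is a difference of values of $f$ and of $g$ at the endpoints; moreover $f+g$ is constant on each confocal ellipse and $f-g$ on each confocal hyperbola. Writing down $f(A)+g(A)=f(B)+g(B)$ (since $A,B$ lie on a confocal ellipse) and $f(C)-g(C)=f(D)-g(D)$ (since $C,D$ lie on the confocal hyperbola you produced in the first part) and rearranging gives exactly $|AD|-|AC|+|BC|-|BD|=0$, with no computation in the confocal parameters at all. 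This is the single ``confocal identity'' you were looking for; without it or an equivalent, your argument for the inscribed circle is incomplete.
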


\begin{figure}[hbtp]
\centering
\includegraphics[height=2.5in]{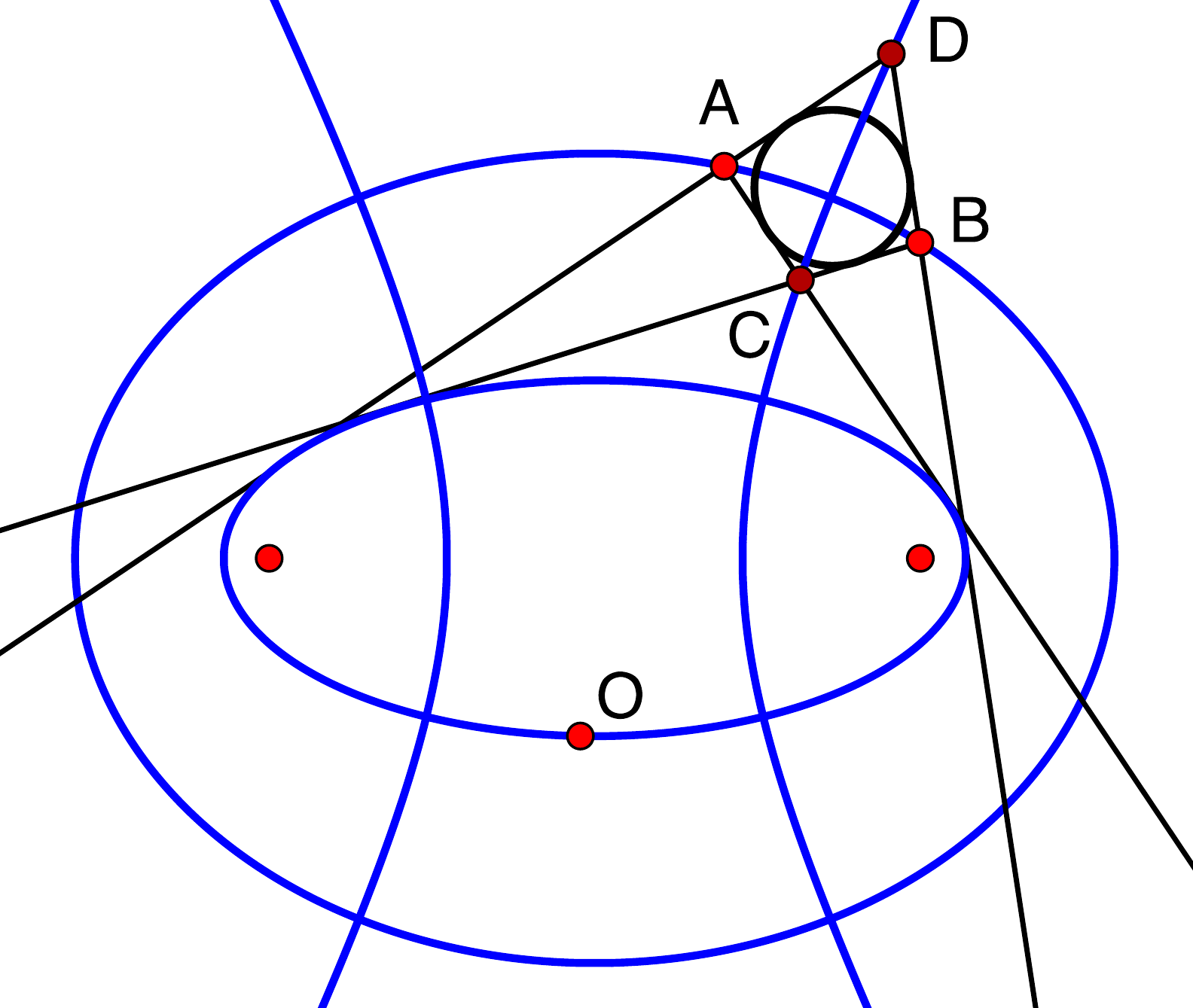}
\caption{Circumscribed quadrilateral}
\label{circlequad}
\end{figure}

\proof Let $x_1, x_2, y_1, y_2$ be the canonical coordinates of the tangency points of the lines $AD, BC, AC, BD$ with the inner ellipse. Since points $A$ and $B$ lie on a confocal ellipse, $y_1-x_1=y_2-x_2$, see Section \ref{coordext}. Then $y_1+x_2=y_2+x_1$, and hence points $C$ and $D$ lie on a confocal hyperbola.

Choose a point $O$ on the inner ellipse and consider the respective functions $f$ and $g$ introduced in Section \ref{caust}. Then 
$$
f(A)+g(A)=f(B)+g(B),\ \ f(C)-g(C)=f(D)-g(D),
$$
hence
$$
f(D)-f(A) - g(A)+g(C)+f(B)-f(C)-g(D)+g(B)=0,
$$
or
\begin{equation} \label{perimeter}
|AD|-|AC|+|BC|-|BD|=0.
\end{equation}
This is necessary and sufficient for the quadrilateral $ABCD$ to be circumscribed.
\proofend 

\subsubsection{Poncelet grid of circles} \label{grid}
Poncelet grid consists of the intersection points of the sides of a Poncelet polygon, that is, a polygon which is inscribed into an ellipse and circumscribed about an ellipse. The points of this grid can be arranged into `concentric' subsets that lie on ellipses and into `radial' subsets that lie on hyperbolas
. See \cite{Sch07}. 

A pair of nested ellipses is projectively equivalent to a pair of confocal ones; this was used to prove the properties of the Poncelet grid in \cite{LT07}. In this confocal case, each concentric set lies on a confocal ellipse, and hence each quadrilateral of the grid is circumscribed,  see Figure \ref{grid1}. 

\begin{figure}[hbtp]
\centering
\includegraphics[height=3in]{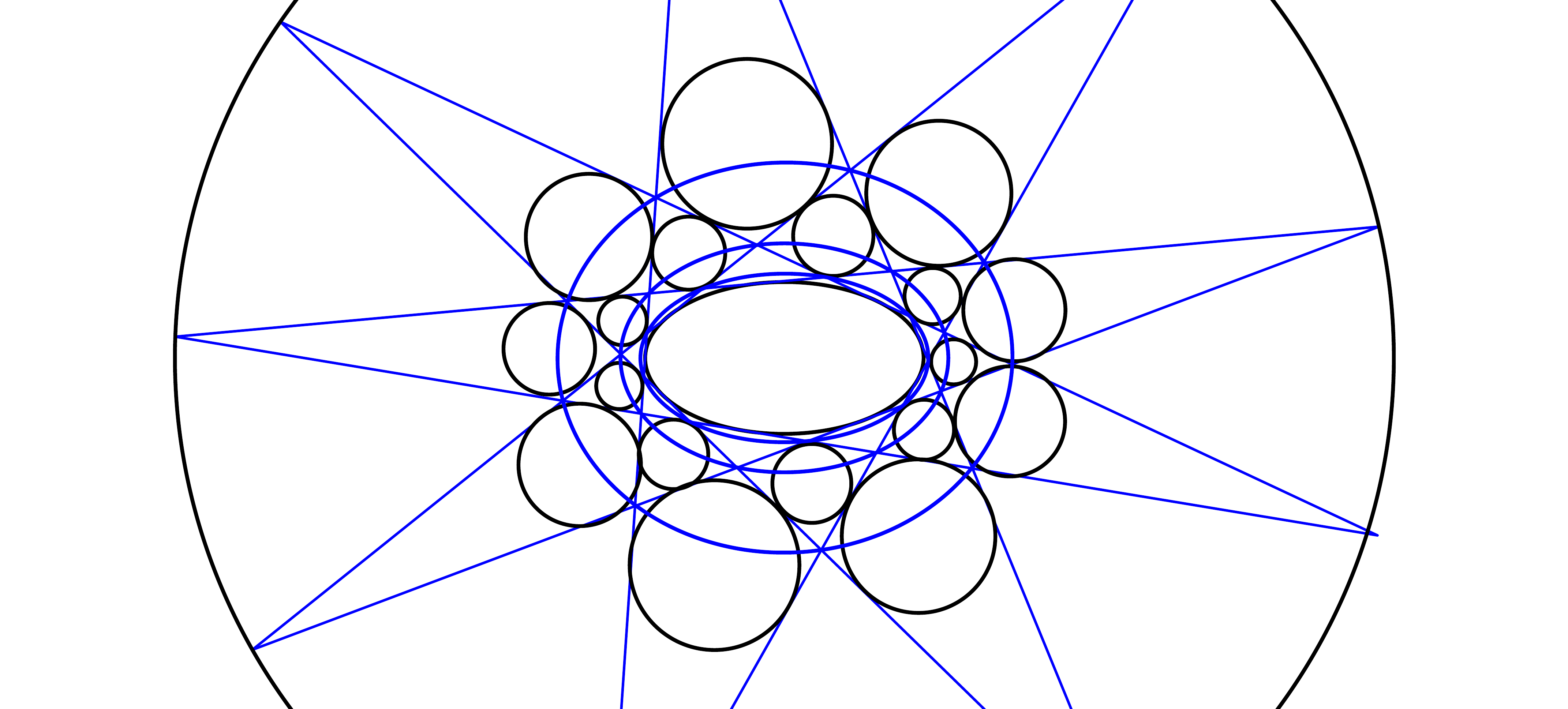}
\caption{Poncelet grid, $n=9$}
\label{grid1}
\end{figure}

\subsection{Ivory's Lemma on quadratic surfaces} \label{surf}

\subsubsection{On ellipsoids} \label{ellipsoids}
Consider a 3-axial ellipsoid
\begin{equation} \label{ellpsd}
\frac{x^2}{a} + \frac{y^2}{b}+ \frac{z^2}{c} =1,\quad a>b>c>0,
\end{equation}
included into the confocal family of quadrics
\begin{equation} \label{conf}
\frac{x^2}{a-\lambda} + \frac{y^2}{b-\lambda}+ \frac{z^2}{c-\lambda} =1.
\end{equation}
Let us recall some classical facts about geometry of quadrics; see, e.g., \cite{Ber87,HC52} and the previously cited books.  

The curves of intersection of the ellipsoid with the confocal quadrics are its lines of curvature, see Figure \ref{ellipsoid}  (borrowed from \cite{HC52}). The singular points of this orthogonal system of curves are the four umbilical points. These points play the role of foci, and the lines of curvature the role of confocal ellipses and hyperbolas. In particular, the lines of curvature are the loci of points whose sum of geodesic distances to a pair of non-antipodal umbilical points is constant.

\begin{figure}[hbtp]
\centering
\includegraphics[height=2in]{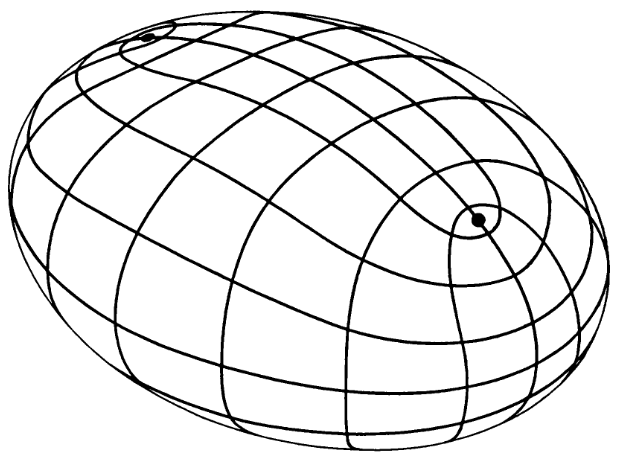}
\caption{Lines of curvature on an ellipsoid}
\label{ellipsoid}
\end{figure}

Consider the billiard inside a domain bounded by a line of curvature (the rays being geodesic and the reflection  optical). This system is completely integrable, as we describe below. See \cite{AF06,ChSh89,GKhT07,Ves90,Ves91} concerning billiards on quadratic surfaces.

According to a Chasles theorem, a generic line is tangent to two quadrics from the confocal family (\ref{conf}). 
The geodesic flow on the ellipsoid has the following property: the tangent lines to a geodesic curve remain tangent to a fixed confocal quadric. The billiard reflection from a quadric in $\R^3$ has the property that the incoming and the outgoing rays are tangent to the same pair of confocal quadrics. 

Combined, these facts  imply that the consecutive geodesic segments of a billiard trajectory remain tangent to the same line of curvature, which serves as a billiard caustic. Thus the situation is similar to the planar one, as described  in Section \ref{bconics}. In particular, one has an ellipsoid version of the Graves theorem: wrapping a closed nonelastic string around a line of curvature produces a line of curvature.

The area-preserving (symplectic) property of the billiard map holds as well: the space of geodesic chords of the billiard table has a canonical area form, invariant under the billiard reflection. The arguments from Section \ref{plane} apply with minimal adjustments, yielding the next result.

\begin{theorem}
\label{IvCurved}
For a quadrilateral made of lines of curvature of a triaxial ellipsoid, the two pairs of opposite vertices are at equal geodesic distances.
\end{theorem}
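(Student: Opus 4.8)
The plan is to repeat, essentially verbatim, the billiard argument used for Theorem \ref{IvGenPlane}, replacing straight lines in the plane by geodesics on the ellipsoid and confocal conics by lines of curvature. Fix the curvilinear quadrilateral $ABCD$ whose four sides are arcs of lines of curvature; as in the planar case the two pairs of opposite sides lie on confocal quadrics of the same type (one pair on confocal ellipsoids, the other on confocal hyperboloids of one kind). Let $\gamma$ be the line of curvature that serves as the caustic of the geodesic segment $BD$. Its existence is guaranteed by the Chasles/Jacobi-type theorem recalled above: the tangent lines to a geodesic of the ellipsoid stay tangent to a fixed confocal quadric, and that quadric cuts the ellipsoid in the line of curvature $\gamma$.

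Next I would introduce the canonical coordinate $x$ on $\gamma$ coming from the Arnold--Liouville structure of the integrable billiard, exactly as in Section \ref{coordext}. Since all lines of curvature confocal to a given billiard boundary share their caustics, the invariant measure $dx$ on $\gamma$ does not depend on which of the four sides of $ABCD$ one reflects in. Hence each of the four billiard reflections in the sides of $ABCD$, restricted to the set of geodesic chords tangent to $\gamma$, is either a shift $x\mapsto x+\const$ or a flip $x\mapsto\const-x$, according to the type of that side, precisely as in \eqref{tworefl}. The composition of the four reflections around $ABCD$ is therefore again a shift $x\mapsto x+(a-b-c+d)$.

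Then I would supply the fixed point and conclude the porism. At each vertex of $ABCD$ two lines of curvature meet orthogonally, so the right-angle corner trick of Section \ref{pfbill} carries over to the surface: a geodesic hitting a vertex is continued by reversing its direction, and the diagonal $BD$ traversed back and forth is a degenerate $4$-periodic billiard trajectory in $ABCD$. This is a fixed point of the composed map, so the shift above is the identity, and the fixed-point set is a whole $1$-parameter family of $4$-periodic trajectories tangent to $\gamma$, interpolating between $BD$ and $AC$; in particular $AC$ is tangent to $\gamma$ and belongs to the family. Since each $4$-periodic billiard trajectory is a critical point of the total geodesic length $L$ on the space of geodesic quadrilaterals inscribed in $ABCD$, the differential $dL$ vanishes along the family, so $L$ is constant on it; evaluating $L$ at the two degenerate ends gives $2\,\dist(B,D)=2\,\dist(A,C)$, which is the assertion.

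The one point requiring genuine care, rather than a word-for-word transcription of the planar proof, is verifying that the whole integrable-billiard package on the ellipsoid holds with the "minimal adjustments" promised above: existence of a confocal caustic for the diagonal geodesic, the shift/flip normal form on a shared caustic, invariance of the symplectic area form on geodesic chords under reflection, and the fact that a one-parameter family of $4$-periodic trajectories consists of critical points of the length functional. All of these follow from the classical theory of geodesics and billiards on quadrics and from the symplectic-reduction construction of the invariant area form recalled in Section \ref{invarea}; see \cite{Ves90,Ves91,GKhT07,AF06,ChSh89}. The remaining steps are then identical to the proof of Theorem \ref{IvGenPlane}.
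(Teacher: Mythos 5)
Your proposal is correct and follows essentially the same route as the paper, which establishes the integrability package on the ellipsoid (lines of curvature as caustics via the Jacobi--Chasles facts, the invariant area form on geodesic chords) and then states that the arguments of Section \ref{plane} apply with minimal adjustments; you have simply written out those adjustments explicitly. The only slip is inessential: the two families of curvature lines on a triaxial ellipsoid are cut out by confocal hyperboloids of one and of two sheets (not by confocal ellipsoids), but this does not affect the shift/flip normal form or the rest of the argument.
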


\begin{remark}[Doubly ruled surfaces] \label{hyperboloid}
{\rm 
The hyperboloid of one sheet is a doubly ruled surface, and the rulings are geodesics. One has a variant of Theorem \ref{IvCurved}, see \cite{AB15}: 
{\it Consider a curvilinear quadrilateral $ABCD$ on a hyperboloid of one sheet whose sides are curvature lines, and points $A$ and $C$ lie on a ruling. Then points $B$ and $D$  lie on a ruling from a different family, and $|AC|=|BD|$.}

This can be proved using the same billiard approach: the rulings, being asymptotic lines, make equal angles with the curvature lines, and hence form segments of billiard trajectory in a table bounded by curvature lines.

Likewise for another doubly ruled surface, the hyperbolic paraboloid.
}
\end{remark}





\subsubsection{On the sphere and in the hyperbolic plane} \label{spherehyp}
The notion of confocal spherical conics is  classical, see \cite{Da}: these are the intersections of the unit sphere with the confocal family of quadratic cones 
\begin{equation} \label{cone}
\frac{x^2}{a-\lambda} + \frac{y^2}{b-\lambda}+ \frac{z^2}{c-\lambda} =0,\quad a>b>c,\ a>\lambda>c.
\end{equation}
Here $a,b$ and $c$ are fixed, and $\lambda$ is a  parameter in the family.

Formula (\ref{cone}) is obtained from (\ref{conf}) in the limit $a,b,c \to 1$.
For the confocal quadrics (\ref{conf}) to intersect the ellipsoid (\ref{ellpsd}), one must have $a>\lambda>c$, and hence $\lambda \to 1$ as well. In the limit, the ellipsoid (\ref{ellpsd}) becomes the unit sphere  $x^2+y^2+z^2=1$, and its intersections with the confocal quadrics become the curves given by (\ref{cone}). 

The spherical billiard inside a domain, bounded by a spherical conic, is integrable in the same way as in the plane, see \cite{ChSh89,Ves90}. 

A similar approach works in the case of the hyperbolic plane, realized as the pseudosphere $x^2+y^2-z^2=-1$ in the pseudo-Euclidean space $\R^{2,1}$. 

Replace the ellipsoid (\ref{ellpsd}) in $\R^3$ by the hyperboloid of two sheets in $\R^{2,1}$
\begin{equation} \label{hyperb}
-\frac{x^2}{a} - \frac{y^2}{b}+ \frac{z^2}{c} =1,
\end{equation}
with distinct positive $a,b,c$. The respective (pseudo)confocal family is
\begin{equation} \label{pseconf}
-\frac{x^2}{a-\lambda} - \frac{y^2}{b-\lambda}+ \frac{z^2}{c-\lambda} =1,
\end{equation}
whose intersections with the hyperboloid (\ref{hyperb}) are its lines of curvature. 
The billiard inside a domain, bounded by a line of curvature, is integrable, with lines of curvature serving as caustics.

Next, one considers the limit $a,b,c\to 1$ that yields the pseudosphere. Again, $\lambda\to 1$ as well, and one  defines  hyperbolic confocal conics as the intersections of the pseudosphere with the family of quadratic cones 
$$
\frac{x^2}{a-\lambda} + \frac{y^2}{b-\lambda}- \frac{z^2}{c-\lambda} =0,
$$  
where $a,b,c$ are fixed and $\lambda$ is a parameter. See \cite{ChSh89,Ves90}, and \cite{DR12,KhT09} for general information about confocal quadrics in pseudo-Euclidean spaces.

The billiard inside a domain bounded by a hyperbolic conic is integrable as well. As before, this yields 
spherical and hyperbolic versions of Ivory's Lemma:

\begin{theorem}
\label{IvSphHyp}
The diagonals of a quadrilateral made of confocal spherical or hyperbolic conics are equal.
\end{theorem}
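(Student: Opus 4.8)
The plan is to repeat, essentially verbatim, the billiard proof of the planar case (Theorem \ref{IvGenPlane}), with the Euclidean plane replaced by the sphere $\Sph^2$ (respectively the hyperbolic plane $\HH^2$) and straight chords replaced by geodesic arcs. The only inputs that are special to the ambient geometry are: (i) that the billiard inside a domain bounded by a confocal conic is completely integrable, with the confocal conics serving as caustics; (ii) that confocal conics of the two types meet orthogonally; and (iii) a Chasles-type statement that a generic geodesic is tangent to exactly one member of the confocal family. All three are classical and were recalled in Section \ref{spherehyp} and the references \cite{ChSh89,Ves90,Da}.

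First I would fix the ambient space to be $\Sph^2$ or $\HH^2$ together with a confocal family of conics in it. By integrability, the space of oriented geodesics meeting a fixed conic carries a canonical area form preserved by the billiard map, and it is foliated by the invariant curves consisting of geodesics tangent to a fixed confocal conic; the Arnold--Liouville theorem then furnishes a canonical coordinate $x$ (well defined up to an additive constant, after normalizing the length of each invariant curve to $1$) in which the billiard map in any confocal conic is a rigid rotation. Exactly as in Section \ref{coordext} --- using the orthogonality of confocal conics of the two types together with the angle-bisection property of the billiard reflection, so that an infinitesimal coordinate quadrilateral degenerates to a kite --- one shows that reflection in a confocal conic of one type is a shift $x \mapsto x+a$ while reflection in a conic of the other type is an involution $x \mapsto b-x$; that is, the analog of (\ref{tworefl}) holds with no change.

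Now take the curvilinear quadrilateral $ABCD$ bounded by four arcs of confocal conics, with diagonals the geodesic arcs $AC$ and $BD$. As in Section \ref{bconics}, at each vertex two confocal conics of opposite type meet at a right angle, so the billiard in $ABCD$ admits reflection in a right angle, defined as direction reversal, and $BD$ traversed back and forth is a $4$-periodic billiard trajectory. Let $\gamma$ be the confocal conic tangent to the geodesic $BD$. The four consecutive reflections in the sides of $ABCD$ preserve tangency to $\gamma$, so on the invariant curve $\gamma$ they read, in the canonical coordinate, $x\mapsto x+a$, $x\mapsto b-x$, $x\mapsto x+c$, $x\mapsto d-x$; their composition is the shift $x\mapsto x+(a-b-c+d)$. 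Since $BD$ is a fixed point of this composition, the shift is the identity, and every geodesic tangent to $\gamma$ closes up into a $4$-periodic trajectory of $ABCD$. This produces a $1$-parameter family of $4$-periodic billiard trajectories interpolating between the back-and-forth orbits along $BD$ and along $AC$; in particular $AC$ is tangent to $\gamma$.

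Finally, an $n$-periodic billiard trajectory on any Riemannian surface is a critical point of the perimeter-length functional $L$ on the space of geodesic $n$-gons inscribed in $ABCD$; along a curve of such critical points $dL$ vanishes identically, so $L$ is constant on the family. Evaluating $L$ at the two degenerate members of the family gives $2|BD| = 2|AC|$, hence $|AC| = |BD|$. The degenerate case in which $BD$ passes through a focus is obtained, as in the proof of Theorem \ref{IvGenPlane}, by taking a limit. I expect the only real work to lie in checking carefully that items (i)--(iii) above hold with the stated uniformity --- in particular, that the Arnold--Liouville coordinate and the reflection formulas (\ref{tworefl}) transfer to constant curvature without modification; granting those, the argument is identical to the planar one.
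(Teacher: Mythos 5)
Your proposal is correct and follows exactly the route the paper intends: Theorem \ref{IvSphHyp} is obtained by transplanting the billiard proof of Theorem \ref{IvGenPlane} to $\Sph^2$ and $\HH^2$, using the integrability of billiards bounded by spherical/hyperbolic conics (with confocal conics as caustics), the canonical Arnold--Liouville coordinate in which reflections become shifts and involutions, and the constancy of perimeter along the resulting family of $4$-periodic trajectories. Your write-up is in fact more detailed than the paper's, which simply cites the integrability results and says ``as before.''
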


An analog of Theorem \ref{inscribed} holds true in the spherical or hyperbolic cases as well. Equation (\ref{perimeter}) is deduced the same way as before, and it is still necessary and sufficient for a quadrilateral to be circumscribed.

See \cite{SW04,Hor11} for Ivory's Lemma in the hyperbolic geometry and, more generally, in Lorentzian space-forms.

\subsection{In higher dimensions} \label{multidim}

In this section, we discuss multi-dimensional versions of Ivory's Lemma.

An ellipsoid with distinct axes in $\R^n$
\begin{equation} \label{ellmult}
\frac{x_1^2}{a_1} +\frac{x_2^2}{a_2}+ \ldots + \frac{x_n^2}{a_n} =1,\quad a_1>a_2>\ldots >a_n>0, 
\end{equation}
is included into the family of confocal quadrics
\begin{equation} \label{conffam}
\frac{x_1^2}{a_1-\lambda} +\frac{x_2^2}{a_2-\lambda}+ \ldots + \frac{x_n^2}{a_n-\lambda} =1.
\end{equation}

The theory of confocal quadrics comprises the following results, see, e.g., \cite{Ar89,Ber87,DR11,Mos80,Tab95,Tab05}:
\begin{itemize}
\item The space of oriented lines in $\R^n$ is a symplectic $2n-2$-dimensional manifold, symplectomorphic to $T^* S^{n-1}$. The billiard reflection in a smooth hypersurface is a symplectic transformation.
\item Through a generic point in space there pass $n$ pairwise orthogonal confocal quadrics (Jacobi). These $n$ quadrics have different topology (for $n=3$, ellipsoid, hyperboloid of one sheet, and hyperboloid of two sheets).
\item A generic line is tangent to $n -1$ confocal quadrics whose tangent hyperplanes at the points of tangency with the line are pairwise orthogonal (Chasles).
\item The set of oriented lines tangent to $n-1$ fixed confocal quadrics is a Lagrangian submanifold of the space of lines.
\item Consider the billiard reflection in one of the confocal quadrics (\ref{conffam}). Then the incoming and outgoing rays  are tangent to the same $n-1$  quadrics from the confocal family.
\item The tangent lines to a geodesic on an ellipsoid are tangent to fixed $n -2$ confocal quadrics (Jacobi-Chasles). 
\end{itemize}

As a result, the billiard map inside a quadric and the geodesic flow on a quadric are completely integrable systems that share the integrals. 

The  space of rays is foliated by invariant Lagrangian submanifolds, and the leaves carry a canonical flat structure (as asserted by the Arnold-Liouville theorem). The billiard map and the geodesic flow preserve this flat structure; in particular, the geodesic flow is a constant vector field in appropriate coordinates on each invariant manifold.  This is a multi-dimensional version of the planar results, described in detail in Section \ref{bconics}.

Consider a `parallelepiped' $\Pi$ bounded by confocal quadrics. 
\begin{theorem} \label{Ivmulti}
The great diagonals of $\Pi$ are equal.
\end{theorem}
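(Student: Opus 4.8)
The plan is to mirror the two-dimensional argument of Theorem \ref{IvGenPlane} verbatim, using the completely integrable structure of the billiard inside a quadric in $\R^n$ described in the bulleted list above. Let $\Pi$ be the parallelepiped bounded by $n$ pairs of confocal quadrics from the family (\ref{conffam}), and let $B$ and $D$ be a pair of opposite vertices of $\Pi$; the great diagonal $BD$ is a geodesic segment (a straight segment, since we are in $\R^n$). By the Chasles theorem the line through $BD$ is tangent to $n-1$ confocal quadrics; I claim these are precisely the $n-1$ quadrics of the confocal family that already appear among the $2n$ faces of $\Pi$ meeting at $B$ together with the ones meeting at $D$. Indeed, at a vertex of $\Pi$ the $n$ faces through it are mutually orthogonal confocal quadrics (Jacobi), the diagonal direction lies in the tangent hyperplane of all but one of them — the one that separates $B$ from $D$ — and tangency of a line to a confocal quadric is exactly the condition that the line lie in a tangent hyperplane at the contact point. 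So the diagonal $BD$ lies on a fixed invariant Lagrangian leaf $\Lambda$ of the billiard, namely the set of oriented lines tangent to those $n-1$ common confocal quadrics.

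Next I would set up the "reflection in a right corner" mechanism in higher dimensions. Each face of $\Pi$ is a confocal quadric, and at a generic point of a face a billiard segment lying on $\Lambda$ reflects to another segment on $\Lambda$ (by the fifth bullet, the confocal caustics are preserved by reflection in a confocal quadric). Just as in the planar kite argument of Section \ref{pfbill}, two adjacent orthogonal faces act like a right angle: an orbit entering the dihedral corner reflects off one face, then the other, and exits reversing the component of its direction normal to the edge, so the composite of the two reflections is well defined even on the diagonal that runs straight into the edge. Iterating, a billiard trajectory that starts by running along $BD$ makes $2n$ reflections (two off each of the $n$ pairs of opposite faces) and closes up; thus $BD$ is a $2n$-periodic billiard trajectory in $\Pi$, all of whose segments lie on $\Lambda$.

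Now I would invoke the Arnold--Liouville flat structure. On the Lagrangian leaf $\Lambda$ the geodesic flow is a constant (translation) vector field in suitable affine coordinates, and each billiard reflection in a confocal quadric, restricted to $\Lambda$, is an affine involution (a reflection of the affine torus/space $\Lambda$), while the billiard map itself is a translation. Hence the return map of the $2n$-fold composite is an affine map of $\Lambda$; as a composition of an even number of reflections it is a translation, and since it fixes the point corresponding to the diagonal $BD$ it must be the identity on $\Lambda$. Therefore the whole leaf $\Lambda$ consists of $2n$-periodic billiard trajectories in $\Pi$ interpolating between the two diagonals $BD$ and $BD' := AC$ — here $A$ and $C$ are the other chosen pair of opposite vertices, which lie on $\Lambda$ by the same corner-reflection bookkeeping. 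Finally, a $k$-periodic billiard trajectory is a critical point of the total length functional $L$ on closed inscribed $k$-gons; a one-parameter (indeed full-leaf) family of such critical points forces $dL\equiv 0$ along the family, so $L$ is constant, and evaluating at the two ends gives $|BD| = |AC|$, i.e. equality of the great diagonals.

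The main obstacle I anticipate is purely bookkeeping rather than conceptual: one must check carefully that the great diagonal of $\Pi$ really is tangent to exactly the $n-1$ confocal quadrics shared by the faces at its two endpoints (so that it sits on a single Lagrangian leaf), and that the $2n$ successive reflections off the faces of $\Pi$ are consistent with the orientation-reversal picture at each dihedral (and lower-dimensional) corner — in particular that the trajectory does not degenerate into a lower-dimensional singular locus of the integrable system. Both points are limiting/genericity statements of the kind already handled in the planar case, where the focus-tangent case was obtained as a limit; here the umbilic-type singular strata of the confocal family in $\R^n$ play the analogous role and the equality $|BD|=|AC|$ extends to them by continuity. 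Once these incidences are verified, the proof is identical word-for-word to that of Theorem \ref{IvGenPlane}.
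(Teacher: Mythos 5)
Your overall strategy coincides with the paper's: realize a great diagonal as a $2n$-periodic billiard trajectory via the corner-reflector mechanism (with $n$ infinitesimal reflections at each vertex), place it on the invariant Lagrangian leaf of oriented lines tangent to $n-1$ fixed confocal quadrics, argue that the $2n$-fold composition of reflections is a parallel translation of the leaf which, having a fixed point, is the identity, and conclude by constancy of the length functional on the resulting family of periodic trajectories. In that sense the architecture is exactly right.

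There is, however, one concretely false assertion that must be excised. You claim that the $n-1$ confocal quadrics tangent to the line $BD$ are precisely those appearing among the faces of $\Pi$ through $B$ and through $D$, on the grounds that the diagonal direction lies in the tangent hyperplane of all but one of the faces at a vertex. Already for a Euclidean rectangle this is wrong: the diagonal at a corner is transversal to both sides and lies in neither tangent line. In general the diagonal meets each facet through its endpoints transversally, so it is tangent to \emph{none} of the faces of $\Pi$; the $n-1$ quadrics furnished by Chasles's theorem are \emph{other} members of the family (\ref{conffam}), touching the segment at interior points --- in the planar case this is the inner ellipse $\gamma$ of Figure \ref{IvL}, which is not a side of the quadrilateral. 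Fortunately nothing downstream depends on the false identification: all the argument needs is that $BD$ is tangent to \emph{some} $n-1$ confocal quadrics (Chasles), whence it lies on a Lagrangian leaf invariant under reflection in every member of the confocal family; so the fix is simply to delete the claim and cite Chasles. Two further points are left at the same level of informality as the paper itself: the composite of an even number of affine involutions of the leaf is a translation only because the reflections come in pairs from opposite facets of the same subfamily (so the linear parts cancel), which your count ``two off each of the $n$ pairs'' does supply; and the assertion that the family contains all $2^{n-1}$ great diagonals, not just $AC$, is the combinatorial ``Euclidean cube'' bookkeeping that the paper also only sketches.
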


\proof The argument is similar to the one in Section \ref{plane}; we present the main steps. 

Since the facets of $\Pi$ are orthogonal, one can  define the billiard reflection at non-smooth points of the boundary of $\Pi$. In particular, a ray that hits a vertex exits in the opposite direction after $n$ (infinitesimal) reflections.\footnote{This is how corner reflectors work; see \url{http://en.wikipedia.org/wiki/Corner_reflector}.}  Thus a diagonal of $\Pi$ is a $2n$-periodic billiard trajectory.

The diagonal is tangent to $n-1$ confocal quadrics. Consider the Lagrangian submanifold $L$ consisting of the lines that are tangent to these $n-1$ quadrics. $L$ is invariant under the billiard reflections in the facets of $\Pi$, and the composition of these $2n$ reflections is a parallel translation on $L$. Since the composition has a fixed point, the diagonal, it is the identity. As a result, one has an $n-1$-parameter family of $2n$-periodic billiard trajectories that includes the diagonal.

The combinatorics of this family is the same as if $\Pi$ was a Euclidean cube (compare with Figure \ref{rectangle}). In particular, this family includes all great diagonals of $\Pi$. Since the length of periodic billiard trajectories  in a family is constant, the great diagonals of $\Pi$ have the same lengths.   
\proofend

\begin{remark}
\label{rem:LinearMap}
{\rm The Ivory Lemma, as formulated in Section \ref{intro}, follows from the 3-dimensional case of Theorem \ref{Ivmulti} and the next additional statement. Let $E_1$ and $E_2$ be two confocal ellipsoids, and let $A$ be a linear map that takes $E_1$ to $E_2$. Let $P_1\in E_1$ and $P_2=A(P_1)\in E_2$ be two corresponding points. Then $P_1$ and $P_2$ lie on the same $n-1$ confocal quadrics, and likewise for a pair of points $Q_1$ and $Q_2$.
}
\end{remark}

\begin{remark}
\label{rem:ExtrIntrDiag}
Consider Figure \ref{ellipsoid} again. By Theorem \ref{IvCurved}, the lengths of geodesic diagonals in each coordinate quadrilateral are equal. On the other hand, the lengths of extrinsic diagonals (segments in $\R^3$) are also equal. This is a limit case of Theorem \ref{Ivmulti} when the two ellipsoids bounding the parallelepiped $\Pi$ coincide.
\end{remark}

Theorem \ref{IvCurved} also has a multi-dimensional generalization, proved in the same way.
Consider an ellipsoid (\ref{ellmult}), and let $\Pi$ be an $n-1$-dimensional parallelepiped on it, bounded by confocal quadrics.

\begin{theorem} \label{IvCurvedmulti}
All pairs of opposite vertices of $\Pi$ are at equal geodesic distances.  
\end{theorem}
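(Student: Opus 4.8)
The plan is to mimic, in the intrinsic geometry of the ellipsoid, the billiard argument that proved Theorem \ref{Ivmulti}, exactly as the planar case (Theorem \ref{IvGenPlane}) and the triaxial case (Theorem \ref{IvCurved}) were handled. The coordinate parallelepiped $\Pi$ on the ellipsoid (\ref{ellmult}) is bounded by $n-1$ pairs of coordinate hypersurfaces, i.e.\ intersections of the ellipsoid with confocal quadrics from the family (\ref{conffam}); by the Jacobi--Chasles properties listed above these coordinate hypersurfaces meet pairwise orthogonally, so $\Pi$ is a curvilinear `box' whose facets are mutually perpendicular along their common boundaries. A great diagonal of $\Pi$ is a broken geodesic joining two opposite vertices; the goal is to show that all $2^{n-1}$ of these broken geodesics have the same length.

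First I would set up the billiard in the geodesic flow on the ellipsoid with $\Pi$ as the table. By the Jacobi--Chasles theorem a geodesic on the ellipsoid keeps its tangent lines tangent to a fixed collection of $n-2$ confocal quadrics, and optical reflection in one of the coordinate hypersurfaces (which is the intersection with a confocal quadric) preserves this collection and adds that quadric to the list of caustics; hence a geodesic billiard trajectory in $\Pi$ has $n-1$ confocal quadrics as intrinsic caustics, and the space of such trajectories is an $(n-1)$-dimensional Lagrangian leaf $L$ in the symplectic space of geodesic chords, carrying the canonical flat (Arnold--Liouville) structure. The orthogonality of the facets lets one define reflection at the non-smooth strata just as in the Euclidean cube / corner-reflector argument: an infinitesimally nearby trajectory entering a corner along $k$ mutually orthogonal facets makes $k$ reflections and leaves in the reversed direction, so a great diagonal of $\Pi$ is a $2(n-1)$-periodic geodesic billiard trajectory with caustics the $n-1$ confocal quadrics defining the facets.

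Next I would run the fixed-point argument. The $2(n-1)$ successive reflections in the facets of $\Pi$ restrict to affine maps of the flat leaf $L$; each reflection in a facet is, in the canonical coordinates, an involution of the form $x\mapsto b_i-x$ on the relevant circle of the invariant torus (as in (\ref{tworefl})), so the composition of all $2(n-1)$ of them is a pure translation of $L$. The diagonal we started with is a fixed point of this translation, so the translation is the identity; therefore the diagonal sits in an $(n-1)$-parameter family of $2(n-1)$-periodic trajectories, and the combinatorial type of this family is that of the family of space diagonals of a Euclidean $(n-1)$-cube, so it sweeps through every great diagonal of $\Pi$. Finally, each member of the family is a critical point of the geodesic perimeter functional $L(\cdot)$ on closed inscribed $2(n-1)$-gons, so $dL$ vanishes along the family and the perimeter is constant; reading off the appropriate half of the perimeter gives equality of the lengths of all pairs of opposite vertices of $\Pi$. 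Equivalently, as in Remark \ref{rem:ExtrIntrDiag}, this is the intrinsic limit of Theorem \ref{Ivmulti} in which the two bounding ellipsoids of the `parallelepiped' coalesce.

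The main obstacle I expect is the verification that the integrable/flat structure on the geodesic billiard in $\Pi$ behaves as cleanly in the intrinsic $n$-dimensional setting as it does in the plane --- specifically, that the canonical coordinates on the Lagrangian leaf $L$ genuinely split (on $L$) as a product of circles on which each individual facet reflection acts as $x\mapsto b_i-x$, so that the composition is a translation. This is standard for $n=2,3$ and is asserted in the bullet list for the Euclidean billiard in a quadric, but transplanting it to geodesic billiards on the ellipsoid requires invoking the shared-integrals statement (geodesic flow and billiard have common action variables) together with the umbilic-focus picture of Figure \ref{ellipsoid}; degenerate strata (umbilic points, closed-up coordinate curves) need a short limiting argument, exactly as the focus case was obtained as a limit in the proof of Theorem \ref{IvGenPlane}. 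Once that structural fact is in hand, the perimeter-is-constant step and the combinatorial identification of the family with the diagonals of a cube are routine.
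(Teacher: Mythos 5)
Your proposal is correct and follows essentially the same route as the paper, which proves Theorem \ref{IvCurvedmulti} by transplanting the proof of Theorem \ref{Ivmulti} to the geodesic billiard on the ellipsoid: corner reflections at the pairwise orthogonal facets, invariance of the Lagrangian leaf of geodesics sharing the caustics, the fixed point forcing the composed reflection to be the identity translation, and constancy of the perimeter along the resulting family of periodic trajectories. Only watch the bookkeeping: since $\Pi$ is $(n-1)$-dimensional, a geodesic diagonal is tangent to $n-2$ confocal quadrics (not $n-1$, and generically not the ones cutting out the facets), the relevant Lagrangian leaf and the family of $2(n-1)$-periodic trajectories are $(n-2)$-dimensional, and there are $2^{n-2}$ great diagonals.
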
  

Similarly, one can also prove multi-dimensional versions of the spherical and hyperbolic Ivory Lemmas, see \cite{SW04,Hor11}. We do not dwell on it here.

\section{Ivory's Lemma for Liouville and St\"ackel nets} \label{LSmetr}
\subsection{Liouville nets}
A Riemannian metric in a domain $U \subset \R^2$ is called a \emph{Liouville metric} if in coordinates $(q_1, q_2)$ of $\R^2$ it has the form
\begin{equation}
\label{eqn:Liou}
ds^2 = (u_1 - u_2)(v_1 dq_1^2 + v_2 dq_2^2)
\end{equation}
for some smooth functions $u_i = u_i(q_i)$, $v_i = v_i(q_i)$, $i = 1,2$.
Liouville gave an explicit form of the geodesics of the metric \eqref{eqn:Liou}. His work was a straightforward generalization of Jacobi's description of geodesics on an ellipsoid.

Note that any coordinate change $q_1 = f_1(q'_1)$, $q_2 = f_2(q'_2)$ transforms a Liouville metric element to a Liouville metric element. Any coordinate change for which coordinate lines remain coordinate lines is of that form. Therefore it is possible to speak of a \emph{Liouville net} instead of a Liouville metric.

It turns out that Liouville nets are characterized by the Ivory property.

\begin{theorem}[Blaschke-Zwirner]
\label{thm:LiouvIvory}
Liouville nets satisfy the Ivory property: the lengths of the geodesic diagonals in all net quadrilaterals are equal.
Conversely, if the lengths of the geodesic diagonals in all net quadrilaterals are equal, then the metric has a Liouville form in any coordinate system for which the net lines are coordinate lines.
\end{theorem}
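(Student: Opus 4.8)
The plan is to treat the two directions separately; the forward direction is a direct computation, and the converse is the substantive half, where the Ivory property must be forced to yield the separated form \eqref{eqn:Liou}. For the forward direction I would write the geodesic equations of a Liouville metric via the Hamilton--Jacobi method: the Hamiltonian $H = \frac{1}{u_1-u_2}\left(\frac{p_1^2}{v_1} + \frac{p_2^2}{v_2}\right)$ separates, so the geodesic length between two points is computed from the additively separable characteristic function $W(q_1,q_2;\alpha) = \int \sqrt{\alpha_1 + u_1(q_1)c}\,\frac{dq_1}{\sqrt{v_1}} + \int \sqrt{\alpha_2 - u_2(q_2)c}\,\frac{dq_2}{\sqrt{v_2}}$ along a geodesic with parameters fixed by the endpoints. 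For a net quadrilateral with corners $(q_1^0,q_2^0)$, $(q_1^1,q_2^0)$, $(q_1^1,q_2^1)$, $(q_1^0,q_2^1)$, the two diagonals connect the opposite corners; the key point is that the separation constant $c$ (the value of a first integral) is the \emph{same} for both diagonals because they are tangent to the same coordinate curve of the conjugate family in the Liouville sense — exactly the billiard/caustic mechanism of Section~\ref{bconics}, now in the geodesic (not billiard) language. Once $c$ and the endpoint data match, the additive separation of $W$ makes the two diagonal lengths literally the same sum of the same one-variable integrals, just with the $q_1$-leg and $q_2$-leg combined differently; equality follows. Alternatively, and more cleanly, I would invoke the $2n$-periodic-trajectory argument of Theorem~\ref{Ivmulti} specialized to $n=2$ with the billiard replaced by a Liouville geodesic net, using that reflection in the coordinate lines of a Liouville net is an involution of the form $x \mapsto b-x$ in the canonical (action) coordinate on each leaf — so the composition of four such reflections is a shift with a fixed point, hence the identity, giving a one-parameter family of closed geodesic quadrilaterals of constant perimeter interpolating the two diagonals.

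For the converse I would argue infinitesimally. Fix a point and let $L(q_1^0,q_1^1;q_2^0,q_2^1)$ denote the length of the geodesic diagonal from $(q_1^0,q_2^0)$ to $(q_1^1,q_2^1)$, and $\tilde L$ the other diagonal from $(q_1^1,q_2^0)$ to $(q_1^0,q_2^1)$; the hypothesis is $L \equiv \tilde L$ for all four free arguments. First I would shrink the quadrilateral: expanding both sides to second order in the side lengths $h_1 = q_1^1 - q_1^0$, $h_2 = q_2^1 - q_2^0$, the first-order terms already encode that the metric is diagonal in these coordinates (the net is orthogonal), which is the classical first consequence; at second order one extracts a PDE relating $g_{11}$ and $g_{22}$ and their first derivatives. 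Then I would use the freedom in the \emph{finite} identity, not just its jet: differentiating $L = \tilde L$ with respect to the four endpoints and using the first-variation formula (the derivative of geodesic length in an endpoint is the unit cotangent vector there), one gets that the exit/entry directions of the two diagonals at a shared coordinate line are reflections of one another — i.e. the coordinate lines act as billiard mirrors for these geodesics. This reduces the problem to: a metric whose coordinate net has the property that geodesics reflect optically off the coordinate lines into geodesics tangent to a common ``caustic'' curve of the conjugate coordinate family must be St\"ackel/Liouville. Writing the geodesic Hamiltonian $H = \frac{1}{2}(g^{11}p_1^2 + g^{22}p_2^2)$ with $g^{ii} = 1/g_{ii}$, the existence of a quadratic-in-momenta first integral $F = a^{11}p_1^2 + a^{22}p_2^2$ functionally independent of $H$ (which the caustic foliation provides) forces, via $\{H,F\}=0$, the classical Levi-Civita separability conditions; solving them gives $g_{11} = v_1(q_1)(u_1(q_1) - u_2(q_2))$, $g_{22} = v_2(q_2)(u_1(q_1) - u_2(q_2))$, which is \eqref{eqn:Liou}. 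Finally, since the argument used only the net (the reflection property is coordinate-independent under $q_i \mapsto f_i(q_i')$), the conclusion holds in any coordinate system adapting to the net, as stated.

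The main obstacle is the converse, and within it the step of manufacturing the first integral $F$ — that is, showing the Ivory property alone (an identity about \emph{lengths} of a two-parameter-by-two-parameter family of diagonals) produces a globally defined quadratic integral of the geodesic flow, rather than just some pointwise relation among the metric coefficients. I expect to handle this by reading the Ivory identity as the statement that two geodesic webs (the diagonals in each of the two ``directions'') have equal arc-length function, differentiating to pass from lengths to the geodesics themselves (Jacobi fields / second variation control the family), and then recognizing the resulting reflection symmetry of geodesics across the net as precisely the caustic condition. An alternative route to the same end, perhaps less painful, is to avoid integrals entirely: take the limit of small quadrilaterals at \emph{every} point, obtain an overdetermined PDE system for $(g_{11},g_{22})$ from the vanishing of the third-order Taylor coefficient of $L - \tilde L$, and check by direct (if tedious) integration that its general solution is the Liouville form — this trades conceptual work for a computation that I would not carry out here but that is known to close. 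One must also address the intermediate case where the family of conjugate coordinate curves degenerates (the analogue of the geodesic through a focus), but as in the proof of Theorem~\ref{IvGenPlane} this is obtained as a limit of the generic case and needs no separate treatment.
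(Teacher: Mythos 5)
Your proposal follows essentially the same route as the paper: the forward direction via Hamilton--Jacobi separation of the Liouville geodesic flow, with the second diagonal realized as the geodesic having the same separation constant $\alpha_2$ but the opposite sign choice in the separated integrals (the paper establishes this ``same constant, opposite sign'' claim directly by interchanging integration limits, rather than by appeal to a common caustic), and the converse via orthogonality, the first-variation/reflection property of the diagonal tangent vectors, and separation of variables. Note that the paper itself only sketches the converse (attributing the complete argument to Zwirner, Blaschke, and Thimm), so your outline of that half is at the same level of detail as the source.
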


The first part of Theorem \ref{thm:LiouvIvory} was proved by Zwirner \cite{Zw}, who gave credit to van der Waerden. Zwirner proved the second part under the assumption of analyticity of the metric. A different proof was given by Blaschke \cite{Bla28} (see also \cite[\S 56]{Bla50}), who at the same time generalized Theorem \ref{thm:LiouvIvory} to higher dimensions, see Section \ref{sec:Sta} below. Thimm in his PhD thesis \cite{Thimm78} gave a modern account of Blaschke's argument and filled a gap in the proof of the second part.

We reproduce below the Blaschke-Thimm's proof of the first part of Theorem \ref{thm:LiouvIvory}.

\subsubsection{Geodesics of Liouville metrics}
A coordinate change $dq'_i = \sqrt{v_i} dq_i$ transforms the metric element \eqref{eqn:Liou} to
\begin{equation}
\label{eqn:LiouSpec}
ds^2 = (u_1 - u_2)(dq_1^2 + dq_2^2).
\end{equation}
We will describe the geodesics of the metrics in the above form.

For every Riemannian metric there is the associated Hamiltonian system on $T^*U$ with the energy function equal to half  the square norm of a cotangent vector. Constant speed geodesics in $U$ are projections of the integral curves of the corresponding Hamiltonian flow.

\begin{lemma}
The Hamiltonian
$$
H(q, p) = \frac12 \frac{p_1^2 + p_2^2}{u_1 - u_2}
$$
has a first integral
\[
f(q, p) = \frac12 \frac{u_2 p_1^2 + u_1 p_2^2}{u_1 - u_2}.
\]
\end{lemma}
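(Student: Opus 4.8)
The plan is to verify directly that $f$ Poisson-commutes with $H$, which for a function with no explicit time dependence is equivalent to $f$ being a first integral. Since we are on $T^*U$ with the canonical symplectic form, $\{H,f\} = \sum_i \left( \partial_{q_i} H \, \partial_{p_i} f - \partial_{p_i} H \, \partial_{q_i} f \right)$, and the computation is a routine but slightly lengthy exercise in the chain rule, using that $u_i$ depends only on $q_i$. First I would introduce the abbreviation $w = u_1 - u_2$, noting $\partial_{q_1} w = u_1'$ and $\partial_{q_2} w = -u_2'$ where $u_i' = du_i/dq_i$, so that the only $q$-dependence in both $H$ and $f$ enters through $w$ and through the coefficients $u_1, u_2$ in the numerator of $f$.

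The cleanest route is to observe the algebraic identity
\begin{equation*}
H = \frac{1}{2}\,\frac{p_1^2 + p_2^2}{w}, \qquad f = \frac{1}{2}\,\frac{u_2 p_1^2 + u_1 p_2^2}{w} = \frac{1}{2}\,\frac{u_1(p_1^2+p_2^2) - (u_1-u_2)p_1^2}{w} = u_1 H - \frac{1}{2} p_1^2,
\end{equation*}
and symmetrically $f = u_2 H + \frac{1}{2} p_2^2$. These two expressions are convenient because $p_1^2$ Poisson-commutes with everything not involving $q_1$, and $u_1(q_1)$ is a function of $q_1$ alone. Then $\{H,f\} = \{H, u_1 H\} - \frac{1}{2}\{H, p_1^2\} = \{H,u_1\}H - \frac{1}{2}\{H, p_1^2\}$, since $\{H,H\}=0$. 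Now $\{H, u_1\} = -\partial_{p_1} H \cdot \partial_{q_1} u_1 = -\frac{p_1}{w} u_1'$, while $\{H, p_1^2\} = 2p_1 \{H, p_1\} = 2p_1 \partial_{q_1} H = 2 p_1 \cdot \left( -\frac{1}{2}\frac{(p_1^2+p_2^2) u_1'}{w^2} \right) = -\frac{p_1 (p_1^2+p_2^2) u_1'}{w^2}$. Substituting and using $H = \frac{p_1^2+p_2^2}{2w}$ gives $\{H,f\} = -\frac{p_1 u_1'}{w}\cdot\frac{p_1^2+p_2^2}{2w} + \frac{1}{2}\frac{p_1(p_1^2+p_2^2)u_1'}{w^2} = 0$.

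There is essentially no serious obstacle here; the only thing to be careful about is bookkeeping of signs in $\partial_{q_i} w$ and in the canonical bracket convention, and making sure the decomposition $f = u_1 H - \frac{1}{2}p_1^2 = u_2 H + \frac{1}{2}p_2^2$ is used consistently (one could equally well finish the computation with the second form, which by symmetry of the argument in the indices $1 \leftrightarrow 2$ again yields zero). As a sanity check one can note that $2f - (u_1+u_2)\cdot 2H = -p_1^2 + p_2^2$ and that $H$ and $f$ are functionally independent away from the degenerate locus, so the system is Liouville-integrable, which is consistent with the classical description of geodesics of Liouville metrics. I would present the $f = u_1H - \frac12 p_1^2$ identity as the key step and relegate the bracket evaluation to one or two displayed lines.
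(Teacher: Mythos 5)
Your proof is correct and takes the same approach as the paper, namely verifying directly that the Poisson bracket $\{H,f\}$ vanishes (the paper merely asserts this without showing the computation). Your decomposition $f = u_1 H - \tfrac12 p_1^2 = u_2 H + \tfrac12 p_2^2$ is a clean way to organize the bookkeeping that the paper leaves to the reader.
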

\begin{proof}
The Poisson bracket $\{H, f\}$ vanishes, namely
\[
\frac{\partial H}{\partial p_1} \frac{\partial f}{\partial q_1} + \frac{\partial H}{\partial p_2} \frac{\partial f}{\partial q_2}= \frac{\partial H}{\partial q_1} \frac{\partial f}{\partial p_1}+\frac{\partial H}{\partial q_2} \frac{\partial f}{\partial p_2},
\]
as needed.
\end{proof}

\begin{theorem}[Liouville]
\label{thm:GeodParam}
The geodesics of the metric \eqref{eqn:LiouSpec} are given by
\[
\int \frac{dq_1}{\sqrt{u_1 - 2\alpha_2}} \pm \int \frac{dq_2}{\sqrt{2\alpha_2 -  u_2}} = \const.
\]
The unit speed parametrization is determined (up to a time shift and time reversal) by
\[
\int \frac{u_1\, dq_1}{\sqrt{u_1 - 2\alpha_2}} \pm \int \frac{u_2\, dq_2}{\sqrt{2\alpha_2 - u_2}} = t,
\]
where the  choice of the sign agrees with that in the first formula.
\end{theorem}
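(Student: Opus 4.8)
The plan is to integrate the Hamiltonian system by separation of variables, starting from the conserved quantities $H$ and $f$ supplied by the preceding lemma, and then extract both the unparametrized geodesic equation and the arc-length parametrization from the resulting first-order ODEs.

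First I would fix an energy level and a value of the first integral. Since we want unit-speed geodesics of \eqref{eqn:LiouSpec}, set $H = \tfrac12$; and write the value of $f$ on this trajectory as $\alpha_2$ (the subscript anticipating that $\alpha_1$ would be the energy constant in a fuller notation). Solving the two linear equations
\[
\frac{p_1^2 + p_2^2}{u_1 - u_2} = 1, \qquad \frac{u_2 p_1^2 + u_1 p_2^2}{u_1 - u_2} = 2\alpha_2
\]
for $p_1^2$ and $p_2^2$ separately, one finds (by Cramer's rule, using that the coefficient determinant is $u_1 - u_2$)
\[
p_1^2 = u_1 - 2\alpha_2, \qquad p_2^2 = 2\alpha_2 - u_2.
\]
The striking point, which drives everything, is that $p_i^2$ depends only on $q_i$: the system has separated.

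Next I would invoke Hamilton's equations for the metric Hamiltonian. Since $H = \tfrac12 g^{ij} p_i p_j$ with $g^{ij} = (u_1 - u_2)^{-1}\delta^{ij}$, we have $\dot q_i = \partial H/\partial p_i = p_i/(u_1 - u_2)$. Hence along the trajectory
\[
\frac{dq_1}{p_1} = \frac{dq_2}{p_2} = \frac{dt}{u_1 - u_2},
\]
with the understanding that each $p_i$ is a function of $q_i$ alone via the boxed formulas above, up to sign; a sign change in $p_i$ reflects the turning points where $u_i = 2\alpha_2$, and the correlated choice of signs in the statement is exactly the bookkeeping of which branch of $\sqrt{\cdot}$ one is on. From $dq_1/p_1 = dq_2/p_2$ we integrate to get
\[
\int \frac{dq_1}{\sqrt{u_1 - 2\alpha_2}} \mp \int \frac{dq_2}{\sqrt{2\alpha_2 - u_2}} = \const,
\]
which is the first displayed formula (the relative sign is a matter of convention in orienting the two square roots). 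For the parametrization, I would write $dt = (u_1 - u_2)\, dq_1/p_1$ and split $u_1 - u_2 = (u_1 - 2\alpha_2) + (2\alpha_2 - u_2)$ so that
\[
dt = \frac{u_1 - 2\alpha_2}{p_1}\, dq_1 + \frac{2\alpha_2 - u_2}{p_1}\, dq_1 = \sqrt{u_1 - 2\alpha_2}\; dq_1 + \frac{p_2^2}{p_1}\, dq_1,
\]
and then use $dq_1/p_1 = dq_2/p_2$ to rewrite the second term as $p_2\, dq_2 = \sqrt{2\alpha_2 - u_2}\; dq_2$. This gives $dt = \sqrt{u_1 - 2\alpha_2}\,dq_1 + \sqrt{2\alpha_2 - u_2}\,dq_2$; but the statement has $u_i$ in the numerators, so instead I would symmetrize differently — writing $u_1 - u_2 = u_1(1) - u_2(1)$ is not quite it; rather one uses $dt/(u_1-u_2) = dq_1/p_1$ together with $dt = u_1\, dt/(u_1 - u_2) - u_2\, dt/(u_1-u_2)$, i.e. $dt = u_1\, dq_1/p_1 - u_2\, dq_2/p_2$ after again substituting $dt/(u_1-u_2) = dq_2/p_2$ in the second piece. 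Integrating yields
\[
\int \frac{u_1\, dq_1}{\sqrt{u_1 - 2\alpha_2}} \pm \int \frac{u_2\, dq_2}{\sqrt{2\alpha_2 - u_2}} = t,
\]
as claimed, with the sign tied to the previous one.

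The only genuine subtlety — and the step I would treat most carefully — is the sign discipline near the turning points $u_i(q_i) = 2\alpha_2$, where $p_i$ vanishes and changes sign; one must check that the correlated sign choices in the two formulas are consistent as the geodesic crosses a coordinate line of the caustic net, so that the "constant" in the first formula really stays constant and $t$ really increases monotonically. Everything else is the elementary algebra of solving a $2\times 2$ linear system and the standard reduction of a separated Hamiltonian system to quadratures. I would also remark that uniqueness of the parametrization up to $t \mapsto \pm t + \const$ is immediate, since $dt$ is determined up to overall sign by the above.
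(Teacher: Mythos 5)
Your argument is correct, and it reaches the same separated momenta $p_1^2 = u_1 - 2\alpha_2$, $p_2^2 = 2\alpha_2 - u_2$ (on the level $H=\tfrac12$, $f=\alpha_2$) that the paper's proof starts from; but you then diverge in method. The paper integrates via the Hamilton--Jacobi formalism: it builds the complete integral $W(q,\alpha) = \pm\int\sqrt{2(\alpha_1 u_1-\alpha_2)}\,dq_1 \pm \int\sqrt{2(\alpha_2-\alpha_1 u_2)}\,dq_2$ and reads off the trajectories and their time parametrization from $\partial W/\partial\alpha_2 = \const$ and $\partial W/\partial\alpha_1 = t$, which produces both displayed formulas at once after setting $\alpha_1=\tfrac12$. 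You instead integrate Hamilton's equations directly, using the classical Liouville chain $dq_1/p_1 = dq_2/p_2 = dt/(u_1-u_2)$ and the splitting $u_1 - u_2 = u_1 - u_2$ distributed over the two equal ratios to get $dt = u_1\,dq_1/p_1 - u_2\,dq_2/p_2$. Both are valid; your route is more elementary and makes the sign bookkeeping at turning points (where some $p_i$ vanishes and reverses) completely explicit, while the paper's Hamilton--Jacobi route is the one that scales without change to the $n$-dimensional St\"ackel case treated later (equations \eqref{eqn:StNonparam}--\eqref{eqn:StTime}), where there is no single chain of equal ratios to exploit. One cosmetic point: your first attempt at the time formula, $dt = p_1\,dq_1 + p_2\,dq_2$, is also a correct identity (it is just $p\cdot\dot q = 2H = 1$ integrated), so the detour there is not an error, merely a formula in a different but equivalent normal form.
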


\begin{proof}
Solving the system
\[
H(q, p) = \frac12 \frac{p_1^2 + p_2^2}{u_1 - u_2} = \alpha_1, \quad f(q, p) = \frac12 \frac{u_2 p_1^2 + u_1 p_2^2}{u_1 - u_2} = \alpha_2,
\]
we obtain
\[
p_1 = \pm \sqrt{2(\alpha_1 u_1 - \alpha_2)}, \quad p_2 = \pm \sqrt{2(\alpha_2 - \alpha_1 u_2)}.
\]
For arbitrary choices of constants $\alpha_i$, these equations describe the lifts to $T^*M$ of geodesics of constant speed $2\alpha_1$.

To describe the geodesics explicitly, use the Hamilton-Jacobi approach. Namely, the equation
\[
H(q, \grad W) = \alpha_1
\]
can be integrated due to the above separation of variables:
\[
W(q, \alpha) = \pm \int \sqrt{2(\alpha_1 u_1 - \alpha_2)}\, dq_1 \pm \int \sqrt{2(\alpha_2 - \alpha_1 u_2)}\, dq_2.
\]
Then the relations
\[
\frac{\partial W}{\partial \alpha_2} = \const, \qquad \frac{\partial W}{\partial \alpha_1} = t
\]
yield  unparametrized equations of constant speed geodesics and their parameterizations, respectively. Remembering that $2\alpha_1$ is the speed, we obtain formulas stated in the theorem.
\end{proof}

\subsubsection{Ivory's lemma for Liouville nets}
\label{sec:LiouvIvory}
Consider the coordinate quadrilateral $[q_1^0, q_1^1] \times [q_2^0, q_2^1]$.
Let $\gamma$ be the unit speed parametrized diagonal from $(q_1^0, q_2^0)$ to $(q_1^1, q_2^1)$:
\[
\gamma(t) = (q_1(t), q_2(t)), \quad \gamma(t_0) = (q_1^0, q_2^0), \quad \gamma(t_1) = (q_1^1, q_2^1).
\]
Without loss of generality, assume that the equations in Theorem \ref{thm:GeodParam} for $\gamma$ have the plus sign between the integrals. Then, for every $t \in [t_0,t_1]$, we have
\[
\int\limits_{q_1^0}^{q_1(t)} \frac{dq_1}{\sqrt{u_1-2\alpha_2}} + \int\limits_{q_2^0}^{q_2(t)} \frac{dq_2}{\sqrt{2\alpha_2-u_2}} = 0,
\]
and the length of this diagonal equals
\[
t_1 - t_0 = \left| \int\limits_{q_1^0}^{q_1^1} \frac{u_1\, dq_1}{\sqrt{u_1-2\alpha_2}} + \int\limits_{q_2^0}^{q_2^1} \frac{u_2\, dq_2}{\sqrt{2\alpha_2-u_2}} \right|.
\]

Let $\overline{\gamma}$ be the unit speed geodesic with $\overline{\gamma}(t_0) = (q_1^0, q_2^1)$, with the same $\alpha_2$-value as $\gamma$, but with the minus sign between the integrals in Theorem \ref{thm:GeodParam}.
We claim that
\[
\overline{\gamma}(t_1) = (q_1^1, q_2^0),
\]
that is, $\gamma$ passes through the opposite corner of the quadrilateral, and its segment between the corners has the same length as the first diagonal.

Indeed, interchanging the integration limits in the second integral in the two equations above yields
\[
\int\limits_{q_1^0}^{q_1^1} \frac{dq_1}{\sqrt{u_1-2\alpha_2}} - \int\limits_{q_2^1}^{q_2^0} \frac{dq_2}{\sqrt{2\alpha_2-u_2}} = 0,
\]
which implies that $\overline{\gamma}$ passes through $(q_1^1, q_2^0)$, and
\[
t_1 - t_0 = \int\limits_{q_1^0}^{q_1^1} \frac{u_1\, dq_1}{\sqrt{u_1-2\alpha_2}} - \int\limits_{q_2^1}^{q_2^0} \frac{u_2\, dq_2}{\sqrt{2\alpha_2-u_2}},
\]
which implies, in its turn, that $\overline{\gamma}$ attains $(q_1^0, q_2^1)$ at the time $t = t_1$. This completes the proof of the first part of Theorem \ref{thm:LiouvIvory}.

\subsubsection{Ivory implies Liouville}
The second part of Theorem \ref{thm:LiouvIvory} states that every Ivory net is a Liouville net.
We give only an idea of the proof of this statement.

First, it follows easily from consideration of infinitesimally thin net quadrilaterals that an Ivory net is orthogonal. Second, one can show that the unit tangent vectors of the diagonals in a net quadrilateral have equal or opposite covariant components at the corresponding points. (Here Blaschke's argument contains a gap filled by Thimm.) This implies that the geodesic flow allows separation of variables with respect to the net. Finally, separation of variables, together with orthogonality of the net, implies that the metric has the Liouville form.

\subsection{Higher dimensions: St\"ackel metrics}
\label{sec:Sta}
\subsubsection{St\"ackel's metric element}
St\"ackel introduced in \cite{Sta93} the following class of metric tensors.
\begin{definition}
A Riemannian metric $ds^2$ on a domain $U \subset \R^n$ is called \emph{St\"ackel metric} if there is a $\GL(n)$-valued function
\[
M \colon U \to \GL(n), \quad M(q) = \big(u_{ij}(q_i)\big)_{i,j=1}^n
\]
with the $i$-th row depending only on the $i$-th coordinate of $\R^n$, and such that
\begin{equation}
\label{eqn:StaMet}
ds^2 = \sum_{i=1}^n g_{ii}(q)\, dq_i^2, \quad g_{ii}(q) = (-1)^{1+i} \frac{\det M}{\det M_{i1}},
\end{equation}
where $M_{i1}$ is the minor obtained by deleting from $M$ the $i$-th row and the first column.

In particular, the coordinate vector fields of $\R^n$ are pairwise orthogonal with respect to a St\"ackel metric.
\end{definition}

For $n=2$, St\"ackel metrics are exactly those of Liouville: the metric element \eqref{eqn:Liou} corresponds to $M = \begin{pmatrix} u_1 v_1 & v_1\\ -u_2 v_2 & -v_2 \end{pmatrix}$.

A \emph{St\"ackel net} is the net of coordinate hyperplanes of a St\"ackel metric. A net determines the coordinates only up to coordinate change $q_i = f_i(q'_i)$ but, as in the Liouville case, this transforms a St\"ackel metric element to a St\"ackel metric element.

\subsubsection{Separation of variables and the Ivory property}
\label{sec:StaeckelSeparation}
The kinetic energy Hamiltonian on $T^*U$ corresponding to \eqref{eqn:StaMet} has the form
\begin{equation}
\label{eqn:HamSta}
H(q,p) = \frac12 \sum_{i=1}^n (-1)^{1+i} \frac{\det M_{i1}}{\det M}\, p_i^2.
\end{equation}

By generalizing the Jacobi-Liouville method, St\"ackel has proved the following theorem.


\begin{theorem}[St\"ackel]
Let $U \subset \R^n$ be an open domain and
\[
ds^2 = \sum_{i=1}^n g_{ii}(q)\, dq_i^2
\]
be a Riemannian metric on $U$ with pairwise orthogonal coordinate vector fields. Then the Hamilton-Jacobi equation for the associated kinetic energy on the cotangent bundle $T^*U$ can be completely solved through separation of variables if and only if $ds^2$ is of the form \eqref{eqn:StaMet}.
\end{theorem}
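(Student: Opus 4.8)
The plan is to prove both implications through the Hamilton--Jacobi equation $H(q,\grad W)=\alpha_1$ for the kinetic Hamiltonian \eqref{eqn:HamSta}, leaning throughout on a single algebraic identity. Write $b^i(q):=g_{ii}(q)^{-1}=(-1)^{1+i}\det M_{i1}/\det M$, so that $H=\frac12\sum_i b^i(q)\,p_i^2$. The identity is the cofactor relation
\[
\sum_{i=1}^n b^i(q)\,u_{ij}(q_i)=\delta_{1j},
\]
valid whenever $M=(u_{ij}(q_i))$ is invertible: expanding $\det M$ along its first column gives the case $j=1$, and for $j\neq 1$ the same sum is the determinant of the matrix obtained from $M$ by replacing its first column with its $j$-th column, which has two equal columns and hence vanishes. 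Equivalently, $(b^1(q),\dots,b^n(q))$ is the first row of $M(q)^{-1}$.

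For the direction ``St\"ackel $\Rightarrow$ separation'', introduce $n$ separation constants and set
\[
W(q,\alpha)=\sum_{i=1}^n W_i(q_i,\alpha),\qquad W_i(q_i,\alpha)=\int^{q_i}\!\sqrt{2\sum_{j=1}^n u_{ij}(s)\,\alpha_j}\;ds .
\]
Then $\partial W/\partial q_i=W_i'(q_i,\alpha)$ depends only on $q_i$ and the constants, and substituting into \eqref{eqn:HamSta} and using the cofactor identity gives $H(q,\grad W)=\sum_j\alpha_j\sum_i b^i(q)u_{ij}(q_i)=\alpha_1$. Thus $W$ solves the Hamilton--Jacobi equation with energy $\alpha_1$, and it is a complete integral because $\partial^2 W/\partial q_i\partial\alpha_j=u_{ij}(q_i)/\sqrt{2\sum_k u_{ik}(q_i)\alpha_k}$, whose determinant is $\det M(q)$ times a nonvanishing diagonal factor; since $M$ is $\GL(n)$-valued, this mixed Hessian is nondegenerate on the relevant open set.

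For the converse, suppose the Hamilton--Jacobi equation admits a complete integral of separated form; after an invertible reparametrization of the separation constants we may take it to be $W(q,\alpha)=\sum_i W_i(q_i,\alpha)$ with $H(q,\grad W)=\alpha_1$ and $\det(\partial^2 W/\partial q_i\partial\alpha_j)\neq 0$. Writing the equation out, $\sum_i b^i(q)\cdot\tfrac12\big(W_i'(q_i,\alpha)\big)^2=\alpha_1$ identically; differentiating in $\alpha_j$ and then fixing a value $\alpha^0$ of the constants gives $\sum_i b^i(q)\,u_{ij}(q_i)=\delta_{1j}$, where $u_{ij}(q_i):=W_i'(q_i,\alpha^0)\,\partial_{\alpha_j}W_i'(q_i,\alpha^0)$ is a function of $q_i$ alone. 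The matrix $M(q)=(u_{ij}(q_i))$ equals $\diag\!\big(W_1'(q_1,\alpha^0),\dots,W_n'(q_n,\alpha^0)\big)$ times the mixed Hessian at $\alpha^0$, hence is nonsingular off a lower-dimensional set; and $M(q)^{\top}b(q)=e_1$ (the first standard basis vector) says exactly that $b^i(q)$ is the $(1,i)$ entry of $M(q)^{-1}$, i.e. $g_{ii}(q)=1/b^i(q)=(-1)^{1+i}\det M/\det M_{i1}$, the St\"ackel form \eqref{eqn:StaMet}.

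The main obstacle is the bookkeeping in the converse: one must first make precise what ``completely solved through separation of variables'' means (a complete additive integral $W=\sum_i W_i(q_i)$ with independent separation constants and nondegenerate mixed Hessian, the energy being allowed to be any one of the constants after reparametrization), and then deal with genericity — the loci where some $W_i'$ vanishes or where $\det M$ degenerates — so that the $\GL(n)$-valued matrix $M$ is recovered on a dense open set, after which the St\"ackel form propagates everywhere by continuity. This is precisely the delicate point where Blaschke's original argument required the correction supplied by Thimm, so a careful treatment is warranted here.
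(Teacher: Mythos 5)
Your argument is correct, and it actually does more than the paper, which states this theorem with attribution to St\"ackel and only sketches the forward implication. Your forward direction is essentially the paper's sketch in slightly different clothing: the paper first exhibits the commuting integrals $\alpha = \tfrac12 M^{-1}p^2$ and then writes the separated generating function $W$ of \eqref{eqn:WSt}, whereas you plug the separated ansatz $W_i' = \sqrt{2\sum_j u_{ij}\alpha_j}$ directly into \eqref{eqn:HamSta} and close the computation with the alien-cofactor identity $\sum_i b^i u_{ij} = \delta_{1j}$; these are the same calculation read in opposite directions, and your explicit check that the mixed Hessian $\partial^2 W/\partial q_i\,\partial\alpha_j$ is nondegenerate is a worthwhile addition. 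The converse, which the paper does not prove at all, is handled by you with a clean linear-algebra observation: differentiating the separated Hamilton--Jacobi identity in $\alpha_j$ at a fixed $\alpha^0$ produces a matrix $M$ with $i$-th row depending only on $q_i$ and satisfying $M^{\top}b = e_1$, which forces $b$ to be the first row of $M^{-1}$ and hence $g_{ii}$ to have the St\"ackel form. Two small caveats. First, your $M$ is $\diag(W_1',\dots,W_n')$ times the mixed Hessian, so it can degenerate where some $W_i'(\,\cdot\,,\alpha^0)$ vanishes; since the definition of a St\"ackel metric requires $M$ to be $\GL(n)$-valued on all of $U$, you should either vary $\alpha^0$ to cover such points or state the conclusion locally on the dense open set where $M$ is invertible and extend the identity for $g_{ii}$ by continuity — you flag this, which is the right instinct. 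Second, the gap that Thimm filled in Blaschke's work concerns the implication ``Ivory net $\Rightarrow$ Liouville/St\"ackel net'' (the second part of Theorem \ref{thm:LiouvIvory}), not St\"ackel's separability criterion, so that attribution in your closing remark is slightly off, though it does not affect the mathematics.
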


First, one shows that the functions $\alpha_2, \ldots, \alpha_n$ defined by
\[
\begin{pmatrix} \alpha_1\\ \vdots\\ \alpha_n \end{pmatrix} = \frac12 M^{-1} \begin{pmatrix} p_1^2\\ \vdots\\ p_n^2 \end{pmatrix}
\]
are pairwise commuting first integrals of the Hamiltonian (note that $\alpha_1 = H$). This implies that the constant speed geodesics satisfy the system of equations
\[
\begin{pmatrix} p_1^2\\ \vdots\\ p_n^2 \end{pmatrix} = 2M \begin{pmatrix} \alpha_1\\ \vdots\\ \alpha_n \end{pmatrix}.
\]
As the $i$-th row of matrix $M$ depends only on $q_i$, we have
\[
p_i^2 = h_i(q_i, \alpha), \quad \alpha = (\alpha_1, \ldots, \alpha_n),
\]
which allows to integrate the equation $H(q, \grad W) = \alpha_1$:
\begin{equation}
\label{eqn:WSt}
W(q, \alpha) = \sum_{i=1}^n \int \pm \sqrt{h_i(q_i, \alpha)} \, dq_i,
\end{equation}
and to obtain equations of the geodesics:
\begin{align}
\int \frac{u_{1i} dq_1}{\sqrt{h_1(q_1, \alpha)}} \pm \cdots \pm \int \frac{u_{ni} dq_n}{\sqrt{h_n(q_n, \alpha)}} &= \const, \quad i = 2, \ldots, n, \label{eqn:StNonparam}\\
\int \frac{u_{11} dq_1}{\sqrt{h_1(q_1, \alpha)}} \pm \cdots \pm \int \frac{u_{n1} dq_n}{\sqrt{h_n(q_n, \alpha)}} &= t, \label{eqn:StTime}
\end{align}
where $\alpha_1$ is set to $\frac12$ and, for each $j$, the $j$-th summands in all equations have the same sign.


Take a coordinate parallelepiped on a Riemannian manifold with a St\"ackel metric element.
Exactly as in Section \ref{sec:LiouvIvory}, one can show that its $2^{n-1}$ great geodesic diagonals correspond to the same value of $\alpha$ but to different choices of the $\pm$ signs in the integrals \eqref{eqn:StNonparam}, \eqref{eqn:StTime}, and that all these diagonals have the same length.

\begin{theorem}[Blaschke]
St\"ackel metrics possess the Ivory property: in every parallelepiped bounded by the coordinate hypersurfaces all great geodesic diagonals have equal lengths. Vice versa, if coordinate parallelepipeds have diagonals of equal lengths, then the metric has a St\"ackel form, after possibly an independent coordinate change.
\end{theorem}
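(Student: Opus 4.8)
```latex
\textbf{Proof proposal.}

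\textbf{Direct implication (St\"ackel $\Rightarrow$ Ivory).} The plan is to extract this from the separation of variables summarized in Section~\ref{sec:StaeckelSeparation}, imitating word for word the two-dimensional argument of Section~\ref{sec:LiouvIvory}. Fix a coordinate parallelepiped $\prod_{i=1}^n[q_i^0,q_i^1]$. A great diagonal joins a vertex $v$ to its antipode $v'$; choosing $v=(q_1^0,\dots,q_n^0)$, such a diagonal is a geodesic whose $i$-th coordinate runs monotonically from $q_i^0$ to $q_i^1$, so along it each $p_i$ keeps a fixed sign and the sign pattern in \eqref{eqn:WSt} is constant. First I would observe that the $n-1$ nonparametric equations \eqref{eqn:StNonparam}, evaluated at the two endpoints, fix the constants on the right-hand sides in terms of the $n$ single-variable integrals $I_i^{(j)}:=\int_{q_i^0}^{q_i^1}u_{ji}\,dq_i/\sqrt{h_i}$; since the defining relations for $\alpha$ involve only the $p_i^2$, a given diagonal determines a single value $\alpha=\alpha(v)$. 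The key point, exactly as in the Liouville case, is that \emph{any} sign pattern $\epsilon=(\epsilon_1,\dots,\epsilon_n)\in\{\pm1\}^n$ applied to the \emph{same} $\alpha$ yields, by \eqref{eqn:StNonparam}, a geodesic through the vertex $v_\epsilon$ obtained from $v$ by flipping the $i$-th coordinate between $q_i^0$ and $q_i^1$ whenever $\epsilon_i=-1$; and by \eqref{eqn:StTime} the time between $v$ and its antipode $v'$ along this geodesic equals $\big|\sum_i\epsilon_i\,I_i^{(1)}\big|$. Overall sign reversal $\epsilon\mapsto-\epsilon$ swaps a diagonal with its reverse, so the $2^{n-1}$ great diagonals correspond to the $2^{n-1}$ sign patterns modulo this involution; but the quantity $\big|\sum_i\epsilon_i I_i^{(1)}\big|$ is, up to adjusting which endpoint is called $v^0$ versus $v^1$ on each axis, independent of $\epsilon$ — precisely because reversing $\epsilon_i$ is the same as interchanging the limits $q_i^0\leftrightarrow q_i^1$ in the $i$-th integral, which is the maneuver used in Section~\ref{sec:LiouvIvory}. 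Hence all great diagonals have the same length. The only thing to check carefully is that every sign pattern actually corresponds to an \emph{admissible} geodesic, i.e.\ that the radicands $h_i(q_i,\alpha)$ stay nonnegative on the relevant coordinate intervals; this holds because the diagonal we started from already traverses those intervals with that $\alpha$, so the $h_i$ are nonnegative there, and the other geodesics use the same $\alpha$ and the same intervals.

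\textbf{Converse (Ivory $\Rightarrow$ St\"ackel).} Here I would follow the Blaschke--Thimm scheme outlined for the planar case, carried out coordinate-pair by coordinate-pair. First, infinitesimally thin net parallelepipeds force orthogonality of the net: shrinking all but two of the edge lengths to zero reduces an Ivory quadrilateral in the corresponding coordinate $2$-plane to the planar situation, where equality of the two diagonals of an arbitrarily small coordinate quadrilateral forces the mixed metric coefficient $g_{ij}$ to vanish, for every pair $i\neq j$; hence $ds^2=\sum_i g_{ii}\,dq_i^2$. Second, and this is the heart of the matter, one shows that the geodesic flow separates variables with respect to the net. The mechanism, as in dimension two, is: in a coordinate quadrilateral spanned by axes $i,j$, the two diagonals, at corresponding points, have unit tangent vectors whose covariant components along the $k$-th axis agree up to sign for each $k$; differentiating the Ivory length identity and using first-variation of arclength turns this into a functional equation forcing the momentum $p_k$ along a geodesic to depend, after the right substitution, only on $q_k$ and on the conserved quantities $\alpha$. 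Once one knows $p_i^2=h_i(q_i,\alpha)$ with $\alpha$ an $n$-tuple of commuting integrals, the relation $\tfrac12(p_1^2,\dots,p_n^2)^\top=M(q)\alpha$ with $M$ having $i$-th row depending only on $q_i$ reads off the St\"ackel matrix, and St\"ackel's theorem (quoted above) identifies $ds^2$ with \eqref{eqn:StaMet}; the "independent coordinate change" in the statement is the $q_i=f_i(q_i')$ absorbing the normalization $dq_i'=\sqrt{v_i}\,dq_i$ used to bring each one-variable factor to standard form. I expect the genuine obstacle to be exactly the step Thimm had to repair in the planar argument: passing from "diagonals have equal length" to "the covariant components of the diagonal directions match at corresponding points," which requires a careful second-variation / symmetry argument rather than a one-line computation, and then, in $n$ dimensions, checking that the pairwise separation statements obtained from all the $\binom n2$ coordinate $2$-planes are mutually compatible and assemble into a single global separation of the $n$-variable Hamilton--Jacobi equation. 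A smoothness or analyticity hypothesis (as in Zwirner, relaxed by Thimm) is presumably needed to push the infinitesimal/formal consequences to an honest local statement; I would state the converse in the appropriate regularity class and refer to \cite{Thimm78,Bla28,Bla50} for the details of the gap-filling.
```
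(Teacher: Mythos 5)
Your proposal takes essentially the same approach as the paper: the direct implication is the two-dimensional sign-flip/limit-interchange argument of Section~\ref{sec:LiouvIvory} transplanted to the separated equations \eqref{eqn:StNonparam}--\eqref{eqn:StTime}, which is exactly what the paper does (it gives no more detail than ``exactly as in Section~\ref{sec:LiouvIvory}''), and your sketch of the converse matches the Blaschke--Thimm outline the paper itself only indicates (orthogonality from infinitesimal quadrilaterals, separation of variables, then St\"ackel's characterization), including the correct identification of the gap Thimm filled. Nothing in your write-up conflicts with the paper's argument, and your added remarks on admissibility of the radicands and on compatibility of the pairwise separations are sensible refinements rather than deviations.
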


\subsection{Geometric properties of St\"ackel metrics}
St\"ackel nets share many properties with confocal quadrics.

\subsubsection{St\"ackel nets induced on coordinate hypersurfaces}
Blaschke \cite{Bla28} proved the following theorem.

\begin{theorem}[Blaschke]
\label{thm:CoordStaeckel}
The restriction of a St\"ackel metric element to any coordinate hypersurface $q_i = \const$ is again a St\"ackel metric element. That is, a St\"ackel net induces on all of its coordinate hypersurfaces St\"ackel nets of one dimension lower.
\end{theorem}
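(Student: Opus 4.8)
The plan is to work directly with the defining matrix $M(q) = \big(u_{ij}(q_i)\big)$ of the St\"ackel metric and to produce, for a fixed coordinate hyperplane $q_n = \const$, a new $(n-1)\times(n-1)$ matrix $\widetilde M$ of the required ``row $i$ depends only on $q_i$'' type whose associated St\"ackel metric element is exactly the restriction of $ds^2$. The restricted metric is $\sum_{i=1}^{n-1} g_{ii}(q)\, dq_i^2$ with $g_{ii}$ still given by \eqref{eqn:StaMet}, but now with the last coordinate frozen; the task is to rewrite these $n-1$ coefficients in St\"ackel form in the variables $q_1, \ldots, q_{n-1}$ alone.

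First I would recall the linear-algebraic reformulation used in Section \ref{sec:StaeckelSeparation}: the first integrals $\alpha_1, \ldots, \alpha_n$ are obtained by applying $\frac12 M^{-1}$ to the column $(p_1^2, \ldots, p_n^2)^\top$, with $\alpha_1 = H$. Restricting to the hypersurface $q_n = \const$ means setting $p_n = 0$ along the motion, i.e. imposing one linear relation among the $p_i^2$; equivalently, on the cotangent bundle of the hypersurface the Hamiltonian is $H$ with $p_n = 0$. So the natural candidate is to take the $(n-1)\times(n-1)$ matrix obtained from $M^{-1}$ by the appropriate operation of ``eliminating the last momentum'': delete the last column of $M^{-1}$ (killing $p_n^2$), then delete one row so that the resulting map from $(p_1^2,\ldots,p_{n-1}^2)$ to the remaining $\alpha$'s is again invertible, and invert. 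The determinant identities for minors (the cofactor/adjugate formulas, and Jacobi's identity for minors of the inverse matrix) should show that the entry in position $(i,j)$ of this new matrix depends only on $q_i$ and $q_n$ — and since $q_n$ is now a constant, only on $q_i$, which is exactly the St\"ackel shape. Then I would verify that the induced metric coefficients $g_{ii}$ computed from $\widetilde M$ via \eqref{eqn:StaMet} agree with the frozen-$q_n$ restriction of the original $g_{ii}$; this is again a ratio-of-minors computation, using how $\det M$ and $\det M_{i1}$ expand along the last row when $q_n$ is held fixed.

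An alternative route — probably cleaner to write — bypasses matrices and uses the Ivory characterization proved just above: St\"ackel $\iff$ all coordinate parallelepipeds have equal great diagonals, after an independent coordinate change. A coordinate parallelepiped inside the hypersurface $q_n = \const$ is a face of a coordinate parallelepiped of the ambient net; its great diagonals are the geodesics of the ambient metric that stay on that face. But a geodesic of the ambient metric lying in $q_n = \const$ is a geodesic of the restricted metric (the hypersurface is totally geodesic for a St\"ackel metric — indeed $\partial_n$ being orthogonal to the other coordinate fields and the metric coefficients being separated forces the second fundamental form in the $q_n$-direction to vanish on such geodesics, or one reads it off from \eqref{eqn:StNonparam}--\eqref{eqn:StTime} by noting that $q_n \equiv \const$ is consistent with those equations). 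Hence the equal-diagonal property for the ambient net restricts to the equal-diagonal property for the induced net on the hypersurface, and the converse half of Blaschke's theorem (Ivory $\Rightarrow$ St\"ackel) upgrades this to: the induced metric is St\"ackel after an independent coordinate change in $q_1, \ldots, q_{n-1}$.

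The main obstacle I anticipate is the bookkeeping in the direct matrix approach: making precise which row of $M^{-1}$ to delete so that invertibility is preserved for an \emph{open dense} set of $q$, and then pushing the minor identities through to recover exactly the formula \eqref{eqn:StaMet} rather than merely something proportional to it. The second (Ivory-based) approach sidesteps this but leans on two nontrivial facts: that $q_n = \const$ is totally geodesic (so that ``diagonal of a face'' makes sense intrinsically), and on the converse direction of Blaschke's theorem including the ``possibly after a coordinate change'' clause — both available to us from the preceding material. I would present the Ivory-based argument as the main proof and remark that an explicit $\widetilde M$ can also be written down from $M^{-1}$.
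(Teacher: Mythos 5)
The paper itself gives no proof of this theorem (it is attributed to Blaschke with a citation), so your proposal stands or falls on its own. The argument you designate as your \emph{main} proof --- the Ivory-based one --- contains a genuine error: a coordinate hypersurface $q_n=\const$ of a St\"ackel net is \emph{not} totally geodesic in general. The basic examples in the paper already refute this: in elliptic coordinates on $\R^2$ the coordinate curves are ellipses, not straight lines, and in ellipsoidal coordinates on $\R^3$ (Example \ref{exl:Ell3}) the coordinate hypersurfaces are confocal quadrics, whose induced Liouville metrics have the classical Jacobi geodesics rather than chords of $\R^3$. Your reading of \eqref{eqn:StNonparam}--\eqref{eqn:StTime} is also off: forcing $q_n\equiv\const$ requires $p_n\equiv 0$, i.e.\ $h_n(q_n,\alpha)=0$, and as Section \ref{sec:GeodFam} explains, a geodesic reaching such a locus is merely \emph{tangent} to the hypersurface and bounces back (or is asymptotic to it); it does not stay on it. Consequently the intrinsic great diagonals of a face are not ambient geodesics, so the ambient Ivory property does not restrict to the intrinsic Ivory property of the face. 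The Corollary immediately following the theorem in the paper is precisely about the fact that intrinsic and extrinsic diagonals of a face are different objects, each requiring its own argument. Without total geodesy, your second route collapses.

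Your first (matrix) route is the correct one and does go through; you should promote it from a remark to the proof. A cleaner packaging than inverting submatrices of $M^{-1}$: on $q_n=c$ the last row $\bigl(u_{n1}(c),\dots,u_{nn}(c)\bigr)$ of $M$ is a constant vector; nondegeneracy of the metric (finiteness of $g_{ii}$ for $i<n$, i.e.\ $\det M_{i1}\ne 0$) forces $u_{nk}(c)\ne 0$ for some $k\ge 2$. Using column $k$ as a pivot, subtract constant multiples of column $k$ from the other columns to reduce the last row to $u_{nk}(c)\,e_k$; this preserves the property that row $i$ depends only on $q_i$. Expanding along the last row gives $\det M = \pm\, u_{nk}(c)\det\widetilde M$ and, applying the same operations inside $M_{i1}$, $\det M_{i1} = \pm\, u_{nk}(c)\det\widetilde M_{i1}$, with the signs arranged so that $g_{ii}|_{q_n=c} = (-1)^{1+i}\det\widetilde M/\det\widetilde M_{i1}$ for $i=1,\dots,n-1$, where $\widetilde M$ is the resulting $(n-1)\times(n-1)$ St\"ackel matrix with entries $u_{ij}(q_i)-\tfrac{u_{nj}(c)}{u_{nk}(c)}u_{ik}(q_i)$. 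This is exactly the ratio-of-minors verification you anticipated, and it settles the bookkeeping worry (which row/column may be eliminated) by the nondegeneracy observation above.
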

By induction, the same is true for intersection of any number of coordinate hypersurfaces.

In the submanifold $q_1 = c_1, \ldots, q_k = c_k$ of an $n$-dimensional St\"ackel net there are two ways of measuring distances: extrinsically and intrinsically. Similarly to Remark \ref{rem:ExtrIntrDiag}, we have the following.

\begin{corollary}
Let $Q$ be a coordinate parallelepiped of a St\"ackel metric, and let $F$ be a face of $Q$. Then all great intrinsic diagonals of $F$ have the same length, and all great extrinsic diagonals of $F$ have the same length as well.
\end{corollary}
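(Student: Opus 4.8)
The plan is to treat the two assertions separately: the intrinsic one is immediate from Theorem~\ref{thm:CoordStaeckel} together with the St\"ackel Ivory theorem one dimension lower, while the extrinsic one follows by a degeneration argument in the spirit of Remark~\ref{rem:ExtrIntrDiag}.

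First I would fix notation: a face $F$ of $Q$ is cut out by fixing some of the coordinates, $q_i = c_i$ for $i$ in an index set $S$, while the remaining coordinates range over $[q_j^0,q_j^1]$, $j\notin S$. For the intrinsic diagonals, Theorem~\ref{thm:CoordStaeckel} (applied repeatedly, as in the remark following it) shows that the metric induced on the slice $\{q_i=c_i:i\in S\}$ is again a St\"ackel metric, whose St\"ackel net is cut out by the hypersurfaces $q_j=\const$, $j\notin S$; with respect to this induced structure $F$ is itself a coordinate parallelepiped. Applying the St\"ackel Ivory theorem in the lower dimension then gives at once that all great geodesic diagonals of $F$, measured in the induced metric, have equal length.

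For the extrinsic diagonals I would let $Q$ collapse onto $F$ by letting $q_i^1\to q_i^0=c_i$ for each $i\in S$ while keeping the parameters $q_j^0,q_j^1$ ($j\notin S$) fixed. For every value of the remaining parameters, the (full-dimensional) St\"ackel Ivory theorem says that the $2^{n-1}$ great geodesic diagonals of $Q$ share a common length $\ell(Q)$. Each such diagonal joins a vertex $v=(q_k^{\eps_k})_{k=1}^n$ to the opposite vertex $\bar v=(q_k^{1-\eps_k})_{k=1}^n$; as the thickness parameters collapse, $v$ and $\bar v$ tend to the vertices of $F$ with free coordinates $(q_j^{\eps_j})_{j\notin S}$ and $(q_j^{1-\eps_j})_{j\notin S}$, i.e.\ to a pair of opposite vertices of $F$, and the geodesic realizing the diagonal tends to the geodesic of the ambient manifold joining these two vertices of $F$. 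Hence $\ell(Q)$ tends to the length of that extrinsic great diagonal of $F$; since every extrinsic great diagonal of $F$ arises in this way (by varying the signs $\eps_k$ for $k\in S$) and $\ell(Q)$ has a single limit, all extrinsic great diagonals of $F$ are equal. One could instead perform this limit directly in the explicit formulas \eqref{eqn:StNonparam}--\eqref{eqn:StTime}: the great diagonals of $Q$ correspond to a common $\alpha$ and differ only in the choice of signs, and sending $q_i^1\to q_i^0$ for $i\in S$ annihilates the corresponding summands in \eqref{eqn:StTime}.

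The one point requiring care is the convergence in the extrinsic step: one must know that the great diagonal of $Q$ depends continuously on the edge parameters and limits onto the correct geodesic of the ambient manifold as $Q$ degenerates. This is automatic if $Q$ lies in a convex normal neighborhood, which is the standing assumption; in the formula-based variant it amounts to continuity of the abelian integrals in \eqref{eqn:StTime} and convergence of the caustic parameters $\alpha$, valid away from the degenerate locus where some integrand under a square root vanishes on the range of integration.
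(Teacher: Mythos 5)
Your proposal is correct and follows essentially the same two-step argument as the paper: the intrinsic case via Theorem~\ref{thm:CoordStaeckel} and the lower-dimensional St\"ackel Ivory theorem, and the extrinsic case by collapsing $Q$ onto $F$ and passing to the limit of the (equal) great diagonals of $Q$. The only quibble is a slip of phrasing at the end of the extrinsic step: the distinct extrinsic diagonals of $F$ are obtained by varying the signs $\eps_j$ for $j\notin S$ (varying $\eps_k$ for $k\in S$ produces diagonals of $Q$ that all collapse onto the \emph{same} diagonal of $F$), but this does not affect the validity of the argument.
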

\begin{proof}
The intrinsic diagonals are equal because the metric in the coordinate subspace spanned by $F$ is St\"ackel. The extrinsic diagonals are equal because $F$ can be viewed as a limit of $n$-dimensional parallelepipeds, and the great diagonals of these parallelepipeds converge to the extrinsic diagonals of $F$.
\end{proof}

\subsubsection{Examples}
The simplest examples of Liouville metrics are surfaces of revolution. The Ivory lemma holds for them trivially by symmetry reasons. Clairaut's theorem and the resulting equations of geodesics can be viewed as a special case of integration of geodesics on Liouville surfaces.

\begin{example}[Elliptic coordinates in $\R^2$]
In the elliptic coordinates associated with the ellipse
\[
\left\{ \frac{x^2}{a} + \frac{y^2}{b} = 1 \right\}, \quad a > b > 0,
\]
the Euclidean metric in $\R^2$ has the form
\[
dx^2 + dy^2 = (\lambda - \mu) \left( -\frac{d\lambda^2}{4(a-\lambda)(b-\lambda)} + \frac{d\mu^2}{4(a-\mu)(b-\mu)} \right),
\]
which is Liouville.
\end{example}

\begin{example}[Ellipsoidal coordinates in $\R^3$]
\label{exl:Ell3}
The Euclidean metric in $\R^3$ has in the ellipsoidal coordinates the form
\begin{multline*}
dx^2 + dy^2 + dz^2 =  \frac{(\lambda-\mu)(\lambda-\nu)}{4(a-\lambda)(b-\lambda)(c-\lambda)} d\lambda^2  \\
- \frac{(\lambda-\mu)(\mu-\nu)}{4(a-\mu)(b-\mu)(c-\mu)} d\mu^2 + \frac{(\lambda-\nu)(\mu-\nu)}{4(a-\nu)(b-\nu)(c-\nu)} d\nu^2.
\end{multline*}
Here $a > \lambda > b > \mu > c > \nu$. This is a St\"ackel metric \eqref{eqn:StaMet} with
\[
M =
\begin{pmatrix}
\frac{\lambda^2}{h(\lambda)} & \frac{\lambda}{h(\lambda)} & \frac{1}{h(\lambda)}\\
\frac{\mu^2}{h(\mu)} & \frac{\mu}{h(\mu)} & \frac{1}{h(\mu)}\\
\frac{\nu^2}{h(\nu)} & \frac{\nu}{h(\nu)} & \frac{1}{h(\nu)}
\end{pmatrix},
\]
where $h(\lambda) = 4(a-\lambda)(b-\lambda)(c-\lambda)$.
\end{example}

\begin{example}[Sphero-conical coordinates in $\R^3$]
The sphero-conical coordinates are $(r,\lambda,\mu)$, where $r^2 = x^2 + y^2 + z^2$, and $\lambda$ and $\mu$ are determined by equation \eqref{cone}, $a > \lambda > b > \mu > c$. The Euclidean metric has the form
\[
dr^2 + r^2(\lambda - \mu) \left( \frac{d\lambda^2}{4(a-\lambda)(b-\lambda)(c-\lambda)} - \frac{d\mu^2}{4(a-\mu)(b-\mu)(c-\mu)} \right),
\]
which is St\"ackel for
\[
M =
\begin{pmatrix}
1 & -\frac{1}{r^2} & 0\\
0 & \frac{\lambda}{h(\lambda)} & \frac{1}{h(\lambda)}\\
0 & \frac{\mu}{h(\mu)} & \frac{1}{h(\mu)}
\end{pmatrix}.
\]
\end{example}

Any St\"ackel net in $\R^n$ (with the Euclidean metric) consists of quadrics (and, at least for $n=2,3$, forms a confocal system or one of its degenerations). This is a result of Weihnacht \cite{Wei24}, reproved in a simpler way by Blaschke in \cite{Bla28} using the Ivory property of the St\"ackel metrics.

\begin{example}[Ellipsoid and sphere]
The ellipsoid $\frac{x^2}{a} + \frac{y^2}{b} + \frac{z^2}{c} = 1$ can be viewed as the level surface $\nu = 0$ in the ellipsoidal coordinates, Example \ref{exl:Ell3}. Thus its intrinsic metric is given in the $(\lambda, \mu)$-coordinates by the formula
\[
ds^2 = (\lambda-\mu) \left( \frac{\lambda\, d\lambda^2}{4(a-\lambda)(b-\lambda)(c-\lambda)} - \frac{\mu\, d\mu^2}{4(a-\mu)(b-\mu)(c-\mu)} \right),
\]
which is of Liouville form.

Similarly, the unit sphere $r=1$ has in the conical coordinates the metric
\[
ds^2 = (\lambda - \mu) \left( \frac{d\lambda^2}{4(a-\lambda)(b-\lambda)(c-\lambda)} - \frac{d\mu^2}{4(a-\mu)(b-\mu)(c-\mu)} \right),
\]
which is also of Liouville form.
\end{example}

\begin{example}[Intersections of confocal quadrics]
Confocal quadrics form a St\"ackel net with respect to the Euclidean metric in any dimension. Therefore, by Theorem \ref{thm:CoordStaeckel}, the intersection of any number of confocal quadrics is a Riemannian manifold with a St\"ackel net. For example, the intersection of two confocal quadrics in $\R^4$ carries a Liouville net.
\end{example}

Theorem \ref{thm:CoordStaeckel} allows to derive the Ivory lemma on the sphere and in the hyperbolic plane from the Ivory lemma in the Euclidean space. Indeed, the Euclidean metric is St\"ackel with respect to the conical coordinates, and the unit sphere is a coordinate hypersurface of the conical coordinate system. The other coordinate hypersurfaces (quadratic cones) intersect the sphere along the ``spherical conics'', see Section \ref{spherehyp}. Thus the great diagonals of the parallelepipeds cut out by spherical conics have the same length in the spherical metric.

The same argument works for the hyperbolic space, realized as a component of a two-sheeted hyperboloid in $\R^{n+1}$ equipped with the Lorentzian metric tensor $-dq_0^2 + dq_1^2 + \cdots + dq_n^2$. The Lorentzian metric has a St\"ackel form with respect to the appropriate analog of the conical coordinate system. It follows that the induced metric on the two-sheeted hyperboloid is also St\"ackel, and hence has the Ivory property.

\subsubsection{Families of geodesics}
\label{sec:GeodFam}
Call a family of geodesics \eqref{eqn:StNonparam} with fixed values of $\alpha_2, \ldots, \alpha_n$ an \emph{$\alpha$-family}. (Recall that $2\alpha_1$ is the speed, so we fix $\alpha_1 = \frac12$.) Locally, the geodesics in an $\alpha$-family are split into $2^{n-1}$ subfamilies (\emph{signed $\alpha$-families}) corresponding to different choices of the $\pm$-signs; each subfamily is parametrized by $n-1$ parameters, the constants on the right hand side of equations \eqref{eqn:StNonparam}.

\begin{lemma}
Locally, each signed $\alpha$-family is orthogonal to the same family of hypersurfaces.
\end{lemma}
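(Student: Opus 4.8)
The plan is to produce an explicit function $W(q,\alpha)$ such that the signed $\alpha$-family, for a fixed choice of signs $\varepsilon=(\varepsilon_1,\dots,\varepsilon_n)\in\{\pm1\}^n$, is exactly the set of geodesics orthogonal to the level hypersurfaces of $W$; since $W$ does not depend on the constants of integration labeling the individual geodesics within the family, all the signed $\alpha$-families (for all $2^{n-1}$ essentially distinct sign patterns) will turn out to be orthogonal to \emph{one and the same} family of hypersurfaces, namely the level sets of the function built from \eqref{eqn:WSt}. Concretely, I would set
\[
W_\varepsilon(q,\alpha) = \sum_{i=1}^n \varepsilon_i \int^{q_i} \sqrt{h_i(s,\alpha)}\, ds,
\]
which is precisely a complete integral of the Hamilton--Jacobi equation $H(q,\grad W)=\alpha_1=\tfrac12$ exhibited in the St\"ackel separation above.

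First I would recall the Hamilton--Jacobi picture: if $W(q,\alpha)$ solves $H(q,\grad W)=\tfrac12$, then the vector field $\grad W$ (raised by the metric) generates a congruence of unit-speed geodesics, and these geodesics are orthogonal to the level hypersurfaces $\{W=\const\}$ — this is the classical "geodesics are normal to wave fronts" statement, and it is the content of the transversality in the Hamilton--Jacobi method. Second, I would check that the geodesics of the signed $\alpha$-family with sign pattern $\varepsilon$ are exactly the integral curves of $\grad W_\varepsilon$: differentiating $W_\varepsilon$ gives $p_i = \partial W_\varepsilon/\partial q_i = \varepsilon_i\sqrt{h_i(q_i,\alpha)}$, which is the momentum parametrization of the signed $\alpha$-family, and the conditions $\partial W_\varepsilon/\partial\alpha_j=\const$ for $j=2,\dots,n$ reproduce equations \eqref{eqn:StNonparam} with the stated signs. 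Third — and this is the crux — I would observe that the hypersurface family $\{W_\varepsilon = c\}$ does not actually depend on $\varepsilon$ in the way one might fear: flipping a sign $\varepsilon_i \mapsto -\varepsilon_i$ replaces $W_\varepsilon$ by $W_\varepsilon - 2\varepsilon_i\int^{q_i}\sqrt{h_i}$, which is a different function, so a little care is needed. The right statement is that one should fix the signs and let only the integration constants on the right-hand sides of \eqref{eqn:StNonparam} vary; then "the same family of hypersurfaces" means the level sets of $W_\varepsilon$ for that fixed $\varepsilon$, and the claim of the lemma is simply the orthogonality of each such geodesic congruence to its own wave fronts. (If one wants the stronger statement that \emph{all} signed $\alpha$-families share the hypersurface family, that follows because the orthogonal hypersurfaces are the joint level sets obtained by the Ivory-type argument of Section \ref{sec:LiouvIvory}: the substitution $q_i \mapsto$ a reflected value in the $i$-th slot, which realizes the sign flip, carries one congruence to another while preserving the foliation by coordinate parallelepiped faces.)

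The main obstacle I anticipate is bookkeeping with the signs and integration constants: making precise which geometric object "the same family of hypersurfaces" refers to, and verifying that the naive $W_\varepsilon$ genuinely solves $H(q,\grad W_\varepsilon)=\tfrac12$ for every $\varepsilon$ — this uses that $H$ in \eqref{eqn:HamSta} is quadratic in each $p_i$, so only $p_i^2 = h_i(q_i,\alpha)$ enters and the signs drop out, which is exactly why the $2^n$ functions $W_\varepsilon$ all have gradients solving the same eikonal equation. Once that is in hand, the orthogonality is immediate from the Hamilton--Jacobi transversality relation $\langle \grad W_\varepsilon, v\rangle = 0$ for $v$ tangent to $\{W_\varepsilon=\const\}$ together with the fact that the geodesic velocity equals $\grad W_\varepsilon$ along the congruence. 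I expect the whole argument to be about half a page.
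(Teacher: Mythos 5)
Your argument is correct and is essentially the paper's own proof: the paper simply observes that a signed $\alpha$-family is the gradient flow of the complete integral $W$ from \eqref{eqn:WSt} (with a fixed choice of signs), so its curves are orthogonal to the level hypersurfaces of $W$. Your additional bookkeeping about the sign pattern $\varepsilon$ and the correct reading of ``the same family'' (namely, that the common orthogonal foliation is shared by all geodesics within one signed subfamily, independently of the integration constants in \eqref{eqn:StNonparam}) is a faithful elaboration of the same idea.
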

\begin{proof}
Indeed, a signed $\alpha$-family is the gradient flow of the function (\ref{eqn:WSt}). Hence, the level hypersurfaces of $W$ are orthogonal to all curves of this family.
\end{proof}

However, a geodesic can contain arcs from different signed subfamilies of the same $\alpha$-family. Sign changes occur at the points of tangency with the coordinate hypersurfaces. Note that in \eqref{eqn:StNonparam} and \eqref{eqn:StTime} we have $h_i(q_i(t), \alpha) > 0$. Now, if $h_i(a_i, \alpha) = 0$ and $q_i(t)$ approaches $a_i$, then there are two possibilities:
\begin{itemize}
\item $q_i(t)$ tends to $a_i$ as $t$ tends to infinity. The geodesic is asymptotic to the coordinate hypersurface $q_i = a_i$;
\item $q_i(t)$ attains $a_i$ in finite time and then ``bounces back''. The geodesic is tangent to the coordinate hypersurface $q_i = a_i$; the signs of $\sqrt{h_i(q_i,\alpha)}$ change in all equations.
\end{itemize}

In the latter case all geodesics which are close to the chosen one and belong to the same $\alpha$-family are tangent to the same coordinate hypersurfaces.

In particular, on Liouville surfaces, $\alpha$-families are tangent to the coordinate curves, and hence orthogonal to their involutes. Thus these involutes are the level curves of the function \eqref{eqn:WSt}.

\subsubsection{Billiard integrability, Ivory, and Poncelet}
\label{sec:StBill}
Consider the billiard inside a coordinate parallelepiped of a St\"ackel net. Equations \eqref{eqn:StNonparam} and the diagonality of the St\"ackel metric imply that a reflection in a coordinate hypersurface $q_i = a_i$ preserves all integrals $\alpha_1, \ldots, \alpha_n$, and changes only the sign of $\sqrt{h_i(q_i,\alpha)}$. This is an analog of the billiard property of confocal quadrics: the incoming and the outgoing rays are tangent to the same $n-1$ confocal quadrics.

As a result, a billiard trajectory inside a St\"ackel parallelepiped behaves similarly to that inside a parallelepiped bounded by confocal quadrics, compare with Section \ref{multidim}. Let us repeat the arguments in a slightly different way.

The reflection in a coordinate hypersurface transforms the constants on the right hand side of \eqref{eqn:StNonparam} linearly, and this linear transformation depends only on $\alpha$. It follows that if the billiard inside a domain bounded by coordinate hypersurfaces has a periodic trajectory, then all nearby trajectories in the same $\alpha$-family are periodic.

Note that, for a trajectory to be periodic, it is necessary that it reflects an even number of times from each coordinate hypersurface (including tangency to coordinate hypersurfaces, which can be viewed as a limit case of reflection). It follows from \eqref{eqn:StTime} that all periodic trajectories from the same $\alpha$-family have the same length.

In particular, this implies the periodicity and constant length for trajectories inside a coordinate parallelepiped ``parallel'' to a big diagonal. This also implies the Poncelet theorem for St\"ackel metrics: here the billiard table is bounded by $q_i=a$ and $q_i=b$ for the same $i$, and the trajectory is chosen to be tangent to one of these hypersurfaces (by the results in the previous section, all geodesics from the same family are tangent to the same hypersurface).

\subsubsection{Graves' theorem and Staude's construction}
Darboux \cite[Livre VI, Chapitre I]{Da1} proved that the Graves theorem (see Figure \ref{string}) holds also for coordinate curves of Liouville metrics. The argument is implicit in our Section \ref{sec:GeodFam}.

A higher-dimensional analog of the Graves theorem is the string construction of confocal quadrics described by Staude \cite{Sta82}. Blaschke \cite{Bla28} used his representation of geodesics of a St\"ackel metric to show that Staude's result holds for all St\"ackel metrics. Usually one cites a very elegant special case of Staude's string construction which involves the focal ellipse and focal hyperbola, see \cite[\S I.4]{HC52} for an illustration.

In the general Staude's construction in dimension $3$, the string first wraps along the intersection curve of two confocal quadrics $F_1$ and $F_2$, and then along the quadric $F_2$. Pulling the string tight at a point, this point will describe a confocal quadric ``parallel'' to $F_1$. (More generally, for St\"ackel nets, the string first wraps along a coordinate line and then along a coordinate surface.)

%


\section{Newton and Ivory theorems in spaces of constant curvature} \label{NIcurv}







\subsection{The gravitational potential on the sphere and in the hyperbolic space}
\label{sec:GravPot}

First of all, we need to describe the law of attraction on the sphere and in the hyperbolic space.

\begin{definition}
The attraction between two points at distance $r$ in $\Sph^n$, respectively in $\HH^n$, is inversely proportional to $\sin^{n-1} r$, respectively to $\sinh^{n-1} r$, and is directed along the geodesic (shortest, in the spherical case) connecting these points.
\end{definition}

As the following lemma shows, this law is the only one for which the force field of a point is divergence-free and rotationally invariant or, equivalently, the potential of a point mass is harmonic and rotationally invariant.

\begin{lemma}
\label{lem:GravSphHyp}
Every rotationally symmetric harmonic function on the sphere or in the hyperbolic space is, up to a constant factor, equal to
\begin{equation}
\label{eqn:FundSol}
u(r) =
\begin{cases}
\int_r^{\frac{\pi}2} \frac{dx}{\sin^{n-1}x} &\text{ in } \Sph^n,\\
\int_r^\infty \frac{dx}{\sinh^{n-1}x} &\text{ in }\HH^n,
\end{cases}
\end{equation}
where $r$ denotes the distance from the point mass.

\end{lemma}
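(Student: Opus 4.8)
The plan is to reduce the statement to an ODE. A rotationally symmetric function $u$ on $\Sph^n$ or $\HH^n$ depends only on the geodesic distance $r$ from the chosen point mass (a pole), so I would write the Laplace--Beltrami operator in geodesic polar coordinates about that pole. In these coordinates the metric is $dr^2 + \rho(r)^2\, d\sigma^2$, where $d\sigma^2$ is the round metric on the unit $\Sph^{n-1}$ and $\rho(r) = \sin r$ in the spherical case, $\rho(r) = \sinh r$ in the hyperbolic case. For a function depending on $r$ alone, the Laplacian is
\[
\Delta u = \frac{1}{\rho(r)^{n-1}} \frac{d}{dr}\!\left( \rho(r)^{n-1} \frac{du}{dr} \right).
\]
Setting $\Delta u = 0$ away from the pole gives $\rho(r)^{n-1} u'(r) = \const$, so $u'(r) = c\,\rho(r)^{1-n}$, and integrating yields $u(r) = c\int \rho(x)^{1-n}\, dx + \const$. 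Choosing the lower limit of integration appropriately (and the additive constant to make $u$ vanish at the ``far'' end, i.e.\ $r=\pi/2$ for the sphere, the natural symmetric point, and $r=\infty$ for $\HH^n$) recovers exactly \eqref{eqn:FundSol} up to the scalar $c$; this establishes that any such harmonic function has the claimed form.

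For the equivalence remark in the statement, I would note that ``force field of a point divergence-free'' means precisely that the radial field $F(r)\,\partial_r$ has $\operatorname{div}(F\,\partial_r) = \rho^{1-n}(\rho^{n-1}F)' = 0$ away from the source, giving $F(r) \propto \rho(r)^{1-n}$, i.e.\ the inverse-$\sin^{n-1}r$ (resp.\ inverse-$\sinh^{n-1}r$) law of the Definition; and $F = -\grad u = -u'(r)\,\partial_r$ ties this to harmonicity of $u$. I should also check that the antiderivative is finite: $\int^{\pi/2} \sin^{1-n}x\, dx$ and $\int^\infty \sinh^{1-n}x\, dx$ both converge at the upper limit (the integrand decays, for $\HH^n$ exponentially), while near $r=0$ the integral diverges like $r^{2-n}$ (or $\log r$ for $n=2$), which is the expected fundamental-solution singularity at the point mass — this confirms \eqref{eqn:FundSol} is well defined on the punctured space.

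The only real subtlety — and the main thing to get right rather than a genuine obstacle — is justifying that a harmonic function invariant under the stabilizer of the pole (which acts transitively on each geodesic sphere) genuinely depends on $r$ alone, so that the reduction to the ODE is legitimate; this is immediate from the transitivity of the isotropy action on distance spheres in a two-point homogeneous space. After that, the argument is the elementary integration above, and uniqueness up to a constant factor follows because the first-order ODE $\rho^{n-1}u' = \const$ has a one-parameter family of solutions modulo the additive constant, which is killed by the normalization at the far end.
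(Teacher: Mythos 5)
Your proof is correct and follows essentially the same route as the paper, which gives two proofs: one via the flux of the divergence-free gradient field through geodesic spheres, and one by writing $\Delta u = u'' + (n-1)u'\,\rho'/\rho$ in geodesic polar coordinates and integrating — your ODE $\rho^{n-1}u'=\const$ is exactly the latter, and your remark on the force field reproduces the former. The additional checks you mention (convergence at the far end, the singularity at $r=0$, and reduction to a function of $r$ by transitivity of the isotropy group on distance spheres) are sound and only make the argument more complete.
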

\begin{proof}[First proof]
The gradient of a harmonic function is a divergence-free vector field. Hence the flux of $\nabla u$ through a sphere centered at the point mass is independent of the radius of the sphere. Since $\nabla u$ is orthogonal to the sphere and has a constant norm over the sphere, it follows that $\|\nabla u(r)\|$ is inversely proportional to the area of the sphere of radius $r$. The latter is proportional to $\sin^{n-1}r$, in the spherical, and to $\sinh^{n-1}r$, in the hyperbolic case. Integration in the radial directions produces the desired formulas.
\end{proof}

\begin{proof}[Second proof]
The spherical and the hyperbolic metrics have, in the polar coordinates, the form
\[
g =
\begin{cases}
dr^2 + \sin^2 r \cdot h &\text{ in }\Sph^n,\\
dr^2 + \sinh^2 r \cdot h &\text{ in }\HH^n,
\end{cases}
\]
where $h$ is the metric tensor on $\Sph^{n-1}$. It is a general fact that, with respect to a warped product metric $dr^2 + \phi^2(r) h$,
the $(1,1)$-Hessian of a function, that depends only on $r$, equals
\[
\Hess u(r) = u'' \cdot d\rho + u' \frac{\phi'}{\phi} \cdot d\sigma
\]
(here $d\rho$ is the ``vertical component'', that is, the $r$-component of a vector, and $d\sigma$ is the ``horizontal component''). Taking the trace, we obtain
\[
\Delta u = \Tr\Hess u = u'' + (n-1)u' \frac{\phi'}{\phi}.
\]
Hence a harmonic function, depending only on the $r$-coordinate, must satisfy
\[
u'' + (n-1)u \frac{\phi'}{\phi} = 0.
\]
This can be integrated to $\log u' = -(n-1) \log \phi + \const$, that is, $u'$ is a multiple of $\phi^{-(n-1)}$.

Applying this to $\phi(r) = \sin r$ or to $\sinh r$, we obtain formula \eqref{eqn:FundSol}.
\end{proof}

\begin{example}
For $n=3$, the integral in \eqref{eqn:FundSol} can be computed explicitly: $u(r) = \cot r$ and $u(r) = \coth r$, respectively.
\end{example}

A body $D \subset \Sph^n$ with a continuous mass density $\rho$ exerts at a point $p$ the gravitational potential $\int_D u(\|x-p\|)\, \rho(x) dx$, and similarly for $\HH^n$.
Instead of a gravitational potential, sometimes it is convenient to speak about an electrostatic potential. In particular, a negative mass can be interpreted as a negative charge.

\begin{remark}
In the spherical case, the potential \eqref{eqn:FundSol} satisfies $u(\pi-r) = -u(r)$. Thus a negative unit charge at the south pole has the same effect as a positive unit charge at the north pole. Also, any electrostatic potential on the sphere is antisymmetric, and any charge distribution is equivalent to a distribution with the support in a hemisphere.
\end{remark}

\subsection{Newton's theorem}
\label{sec:Newton}
Figure \ref{fig:NewtonToIvory}, left, illustrates a proof of Newton's theorem in the Euclidean case.
Take a point $p$ inside a spherical shell and consider a thin two-sided cone with apex $p$. The intersection of the cone with the shell consists of two opposite truncated cones. If we show that the forces exerted on $p$ by these two components compensate each other, then Newton's theorem will follow. The norm of the gravitational field at distance $r$ from $p$ is inversely proportional to $r^{n-1}$, and the width of the cone with apex at $p$ is proportional to $r^{n-1}$. Therefore the force exerted by a thin truncated cone is proportional to its height. Since every line through $p$ intersects the shell in two segments of equal length, the forces exerted by opposite truncated cones compensate each other.

In a formal way:
\[
\int_D \frac{v}{\|x-p\|^{n-1}}\, dx = \int_{\Sph^{n-1}} \int_{a(v)}^{b(v)} v\, dr\, d\phi = \int_{\Sph^{n-1}} (b(v) - a(v))v\, dr\, d\phi = 0
\]
because $b(-v) - a(-v) = b(v) - a(v)$.
Here $v = \frac{x-p}{\|x-p\|}$, and $[a(v),b(v)]$ is the interval of intersection of the ray in direction $v$ with the shell $D$.

The form of the gravitational field outside of the shell follows by symmetry arguments.

The following is an analog of Newton's theorem for spaces of constant curvature. In the case of $3$-dimensional sphere, it was proved by V. Kozlov \cite{Koz00} by way of a direct computation.
\begin{theorem}
\label{thm:NewtonSph}
The gravitational field created by a spherical shell in the hyperbolic space equals zero in the region bounded by the shell. Outside of the shell, it is the same as the field created by the total mass of the shell concentrated at the center.

In the spherical space, a spherical shell exerts no force in the smaller of the two regions bounded by it, as well as inside the antipodal region. Between the sphere and its antipode, the field is the same as the one created by the total mass of the spherical shell concentrated at the center.
\end{theorem}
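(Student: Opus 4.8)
The plan is to follow Newton's original geometric argument, as outlined in the Euclidean case in Section \ref{sec:Newton}, and to verify that the two crucial proportionalities — the decay rate of the field and the ``width'' of an infinitesimal cone — cancel exactly in the spaces of constant curvature, just as they do in $\R^n$.

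\textbf{Vanishing inside the shell.} Fix a point $p$ in the region bounded by the shell. Take a thin two-sided geodesic cone with apex $p$; it cuts the shell in two opposite truncated pieces. For a piece at geodesic distance $r$ from $p$, the solid-angle cross-section of the cone has $(n-1)$-volume proportional to $\sin^{n-1}r$ (resp.\ $\sinh^{n-1}r$), by the warped-product form $dr^2+\phi^2(r)h$ of the metric used in the second proof of Lemma \ref{lem:GravSphHyp}. By the Definition of the attraction law, the force per unit mass at distance $r$ is proportional to $\phi(r)^{-(n-1)}$. Hence the force contributed by a thin truncated piece of ``height'' $dr$ is proportional to $dr$ alone — independent of $r$ — exactly as in the Euclidean case. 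So I need the statement: every geodesic through $p$ meets the shell in two arcs of equal length. For the hyperbolic space and for the smaller region in the spherical case this is immediate from the reflection symmetry of the sphere in any geodesic hyperplane through its center composed with the central symmetry; more precisely, the isometry fixing $p$ and reversing a given geodesic through $p$ maps the shell to itself (it is an isometry fixing the center), hence swaps the two arcs. Summing (integrating) over all directions, opposite contributions cancel and the net force at $p$ is zero; the same integral identity as displayed in Section \ref{sec:Newton} applies verbatim with $\|x-p\|^{-(n-1)}$ replaced by $\phi(\dist(x,p))^{-(n-1)}$ and $b(v)-a(v)$ the length of the intersection arc.

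\textbf{Behavior outside; the spherical dichotomy.} Outside the shell, a rotational symmetry argument shows the potential is rotationally invariant about the center (the shell is invariant under the stabilizer of its center, and any test configuration can be rotated into any other at the same distance). A rotationally invariant harmonic function is, by Lemma \ref{lem:GravSphHyp}, a constant multiple of $u(r)$, and matching the flux through a large sphere to the total mass identifies the constant; this gives the ``concentrated at the center'' conclusion. In the spherical case the new feature is that a ``line through $p$'' is a closed geodesic, so it meets the shell in \emph{two} pairs of points, and the region bounded by the shell has two components (the ball around the center and the antipodal ball). When $p$ lies in the \emph{smaller} ball, or in the antipodal ball, each of the two geodesic segments of the chord cut off inside the shell has its two endpoints on the same side, and the equal-length cancellation still holds; one has to check that the cone through $p$ really does slice the shell into two opposite truncated pieces with matching widths — here the antisymmetry $u(\pi-r)=-u(r)$ from the Remark after Lemma \ref{lem:GravSphHyp} is what guarantees that the ``far'' piece (reached by going the long way around) and the corresponding near piece combine correctly. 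Between the sphere and its antipode, $p$ sees the shell as a single truncated cone along each direction, and the leftover contribution integrates to the field of a point mass at the center (equivalently, a point mass of opposite sign at the antipode).

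\textbf{Main obstacle.} The routine part is the width-versus-decay cancellation, which is forced by the very definition of the attraction law. The delicate part is the bookkeeping in the spherical case: carefully tracking which of the two arcs of a closed geodesic lies ``inside'' the shell for $p$ in each of the three regions (small ball, antipodal ball, intermediate annulus), and using $u(\pi-r)=-u(r)$ to see that an infinitesimal cone's two opposite truncated pieces produce either cancelling or additive contributions as claimed. I expect this sign/region analysis — rather than any analytic estimate — to be the crux, and it is cleanest to phrase it as: the force exerted at $p$ by a mass element at $x$ depends only on the unit vector $v$ and the value $b(v)-a(v)$, together with the behavior of $u$ at the two ends of the chord, and then to integrate over $v\in\Sph^{n-1}$ exactly as in the Euclidean display.
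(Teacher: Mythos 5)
Your argument is essentially the paper's: Newton's cone argument (equal lengths of the two chord segments, with the $\sin^{n-1}r$, respectively $\sinh^{n-1}r$, growth of the cone's cross-section exactly cancelling the decay of the force) gives the vanishing inside, and rotational symmetry plus Lemma \ref{lem:GravSphHyp} identifies the exterior field up to a constant. The only divergence is in how that constant is pinned down --- you use a flux (Gauss) normalization, while the paper uses the asymptotics at infinity in the hyperbolic case and a direct integration (or comparison with the hyperbolic integral) in the spherical case; this is a harmless variant, and your appeal to $u(\pi-r)=-u(r)$ in the interior cancellation is not actually needed, since equal arc lengths already force the cancellation there.
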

\begin{proof}
The vanishing of the field inside the shell (and its antipode, in the spherical case) can be proved by  Newton's argument. Again, there are two main points. First, every line through an interior point intersects the shell in two segments of equal lengths. Second, the norm of the field ($\sinh^{-(n-1)} r$, respectively $\sin^{-(n-1)} r$) cancels the proportionality factor of the area element of the sphere at distance~$r$.

The force field outside the shell is a rotationally symmetric divergence free vector field. Hence, by Lemma \ref{lem:GravSphHyp}, it is proportional to the field created by a certain mass concentrated at the center. In the hyperbolic case, the asymptotics of the force field at infinity implies that this mass equals the total mass of the spherical shell.

In the spherical case we cannot use the asymptotics in order to determine this mass. Instead, it can be computed by integrating the fundamental solution \eqref{eqn:FundSol} over the mass distribution, as it is done in \cite{Koz00}. Alternatively, since we know the result of this integration in the hyperbolic case (the mass of the point equals to the mass of the shell), the same result holds in the spherical case, because the only difference between the integrals is in substituting $\sin$ for $\sinh$.
\end{proof}

\begin{figure}[ht]
\centering
\includegraphics{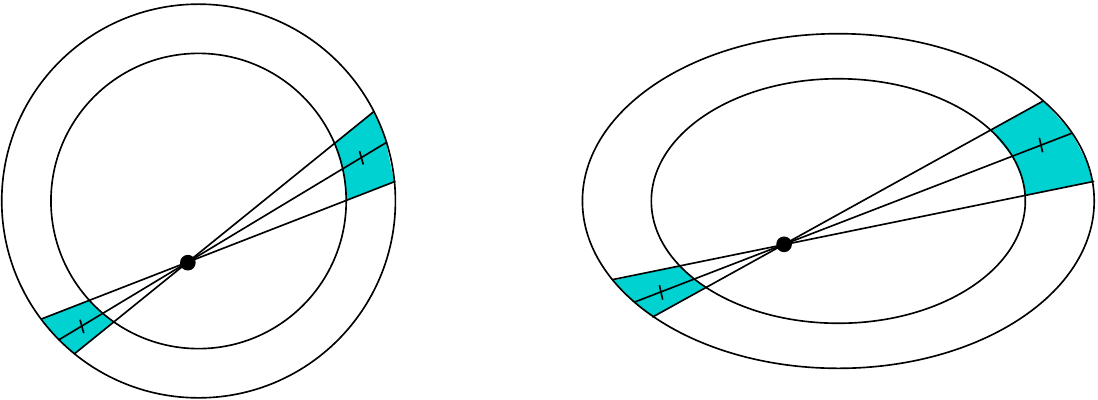}
\caption{Vanishing of the force field inside spherical and homeoidal shells.}
\label{fig:NewtonToIvory}
\end{figure}

\subsection{Homeoids and homeoidal densities}
The argument from Section \ref{sec:Newton} also proves the vanishing of the gravitational force inside a homeoidal shell: since a homeoidal shell in $\R^n$ is an affine image of a spherical shell, each line through an interior point $p$ intersects it in two segments of equal lengths, see Figure \ref{fig:NewtonToIvory}, right. For hyperbolic and spherical analogs of the Ivory theorem we have to find non-spherical shells in $\HH^n$ and $\Sph^n$ with the same property.

\begin{definition}
Let $C$ be an elliptic cone in $\R^{n+1}$, that is the zero set of a quadratic form of index $1$:
\[
C = \{x \in \R^{n+1} \mid q(x) = 0\}, \quad \sign q = (-, +, \ldots, +).
\]
The cone $C$ intersects the unit sphere $\Sph^n = \{\|x\| = 1\} \subset \R^{n+1}$ along two diametrically opposite components:
\[
C \cap \Sph^n = E \cup -E.
\]
Each component is called a \emph{spherical ellipsoid}.

If an elliptic cone $C$ is contained in the standard light-cone $\{x \in \R^{n+1} \mid \|x\|^2_{n,1} < 0\}$ (where $\|x\|^2_{n,1} = -x_0^2 + x_1^2 + \cdots + x_n^2$), then its intersection with the upper sheet $\HH^n$ of the hyperboloid $\|x\|^2_{n,1} = -1$ is called a \emph{hyperbolic ellipsoid}.
\end{definition}

\begin{definition}
Let $q$ be a quadratic form defining a spherical or hyperbolic ellipsoid $E$. The shell between two level sets $\{\epsilon_1 \le q(x) \le \epsilon_2\}$ of $q$, intersected with $\Sph^n$ or $\HH^n$, is called a \emph{homeoid} with the core $E$.
The level sets are assumed to lie sufficiently close to $E$, so that the shell is homeomorphic to a cylinder over $E$. On the other hand,
we allow the numbers $\epsilon_1$ and $\epsilon_2$ to be of the same sign, so that the core $E$ may lie outside of the shell.
\end{definition}


\begin{lemma}
\label{lem:EqualHeights}
If a geodesic intersects a spherical or hyperbolic homeoid in two segments, then these two segments have equal lengths.
\end{lemma}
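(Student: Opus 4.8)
The plan is to reduce the claim to an elementary one-variable analysis of the quadratic form $q$ restricted to the geodesic, and to prove the slightly stronger fact that \emph{every} segment of the intersection of a geodesic with the region $\{\epsilon_1\le q\le\epsilon_2\}$ has the same length. A geodesic of $\Sph^n\subset\R^{n+1}$, respectively of the sheet $\HH^n=\{\|x\|^2_{n,1}=-1,\ x_0>0\}$ of the hyperboloid in $\R^{n+1}$, is the intersection of the sphere, respectively the hyperboloid, with a two-dimensional linear subspace $V$ through the origin, and it carries the arc-length parametrization
\[
\gamma(s)=\cos s\cdot u+\sin s\cdot w\ \ \text{(spherical)},\qquad
\gamma(s)=\cosh s\cdot u+\sinh s\cdot w\ \ \text{(hyperbolic)},
\]
where $\{u,w\}$ is orthonormal in the spherical case, and, writing $\langle\cdot,\cdot\rangle$ for the Lorentzian form, satisfies $\langle u,u\rangle=-1$, $\langle w,w\rangle=1$, $\langle u,w\rangle=0$ (with $u$ in the upper sheet) in the hyperbolic case. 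Let $q$ be the quadratic form whose level sets bound the homeoid, and set $\phi(s):=q(\gamma(s))$. The homeoid is a union of connected components of $\{\epsilon_1\le q\le\epsilon_2\}$ intersected with the ambient sphere, respectively hyperboloid; since any connected subset of that region lies in a single component, each segment of the intersection of $\gamma$ with the homeoid is a connected component of $S:=\{s:\epsilon_1\le\phi(s)\le\epsilon_2\}$, whose length equals the length of the corresponding $s$-interval. Thus it suffices to prove that all components of $S$ have equal length.

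Now I would compute $\phi$. Because $q$ is a quadratic form and the coordinates of $\gamma(s)$ are linear combinations of $\cos s,\sin s$, respectively $\cosh s,\sinh s$, the function $\phi$ involves only the zeroth and the second harmonics; by the double-angle identities,
\[
\phi(s)=A+R\cos\!\big(2(s-s_0)\big)\ \ \text{(spherical)},\qquad
\phi(s)=A+c_1e^{2s}+c_2e^{-2s}\ \ \text{(hyperbolic)},
\]
for suitable constants depending on $q$ and $V$. (Equivalently $\phi''=-4(\phi-A)$ on the sphere and $\phi''=4(\phi-A)$ in hyperbolic space, which also follows from $\gamma''=-\gamma$, respectively $\gamma''=\gamma$, together with the constancy along $\gamma$ of $q(\gamma)+q(\gamma')$, respectively $q(\gamma)-q(\gamma')$.)

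From here the symmetry does the work. In the spherical case $\phi$ is $\pi$-periodic and symmetric about each of its critical points, so $S$ is invariant both under the half-turn $s\mapsto s+\pi$ of the great circle and under the reflection in each critical point; these isometries generate a group acting transitively on the components of $S$ (of which there are $0$, $2$, or $4$, besides the degenerate possibilities $S=\varnothing$ or $S$ the whole circle), so all components have equal length. In the hyperbolic case $\phi$ is monotone when $c_1c_2\le0$, and then $S$ is a single interval; when $c_1c_2>0$ one has $\phi(s)=A\pm2\sqrt{c_1c_2}\,\cosh\!\big(2(s-s_0)\big)$, which is symmetric about its unique critical point $s_0$, so $S$ is symmetric under $s\mapsto 2s_0-s$, has at most two components, and if it has two they are exchanged by that reflection and hence have equal length. (One can observe in addition that $\{q<0\}\cap\HH^n$ is bounded, so $\phi$ is bounded below and the concave case does not arise, but the argument does not use this.) This proves the stronger statement, and the Lemma follows.

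The computation and the ensuing case analysis are entirely routine, so I do not expect a genuine obstacle. The one place that repays a moment's care is keeping track of the number of components of $S$ in the spherical case; but whatever it is, the symmetries of $\gamma$ identified above act transitively on them, which is precisely what is needed.
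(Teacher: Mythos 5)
Your proof is correct and follows essentially the same route as the paper: both reduce to the restriction of $q$ to the $2$-plane of the geodesic and conclude by the symmetries of that restriction (reflection at critical points plus the antipodal/boost symmetry), the paper arguing pictorially with a normalizing isometry in the hyperbolic case while you make the same symmetries explicit by computing $q(\gamma(s))=A+R\cos(2(s-s_0))$, resp.\ $A+c_1e^{2s}+c_2e^{-2s}$. This computation is exactly the ``direct computation'' alternative the paper itself points to (cf.\ Lemma \ref{lem:ArnSph}), and your case analysis of the components is complete.
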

\begin{proof}
A geodesic is an intersection of $\Sph^n$ or $\HH^n$ with a two-dimensional vector space $L$. The segments inside a homeoidal shell are circular or hyperbolic arcs enclosed between two level sets of the quadratic form $q$ restricted to $L$. The restriction has one of the signatures $(+,+)$, $(+,0)$, or $(+,-)$ (the first two possibilities can occur when both boundaries of the homeoid are exterior to the core). Possible views of the plane $L$ in the spherical case are depicted on Figure \ref{fig:EqualHeights}. The thick arcs have equal lengths by symmetry reasons.

\begin{figure}[ht]
\centering
\includegraphics[width=.9\textwidth]{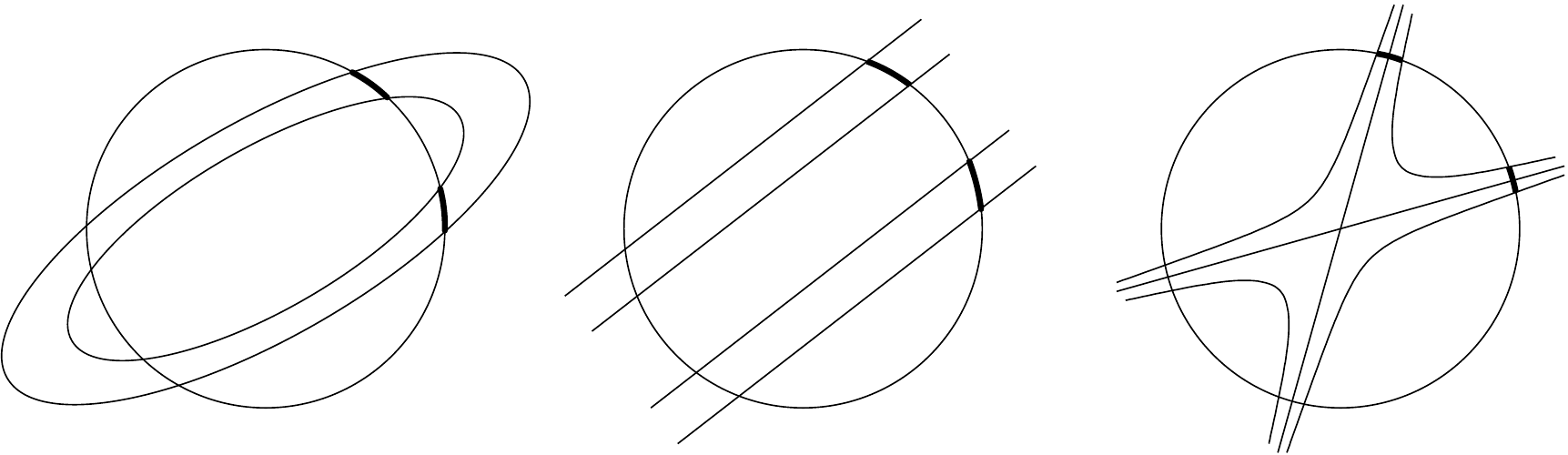}
\caption{Geodesics intersecting a spherical homeoid.}
\label{fig:EqualHeights}
\end{figure}

In the hyperbolic case, the pictures are not necessarily symmetric, but can be made symmetric by applying a hyperbolic isometry (that does not change hyperbolic lengths of the segments). Figure \ref{fig:EqualHeightsHyp} illustrates the case when the restriction of $q$ to $L$ has signature $(+,0)$.

Alternatively, the hyperbolic case can be proved by a direct computation, see Lemma \ref{lem:ArnSph}.
\end{proof}

\begin{figure}[ht]
\centering
\includegraphics[width=.8\textwidth]{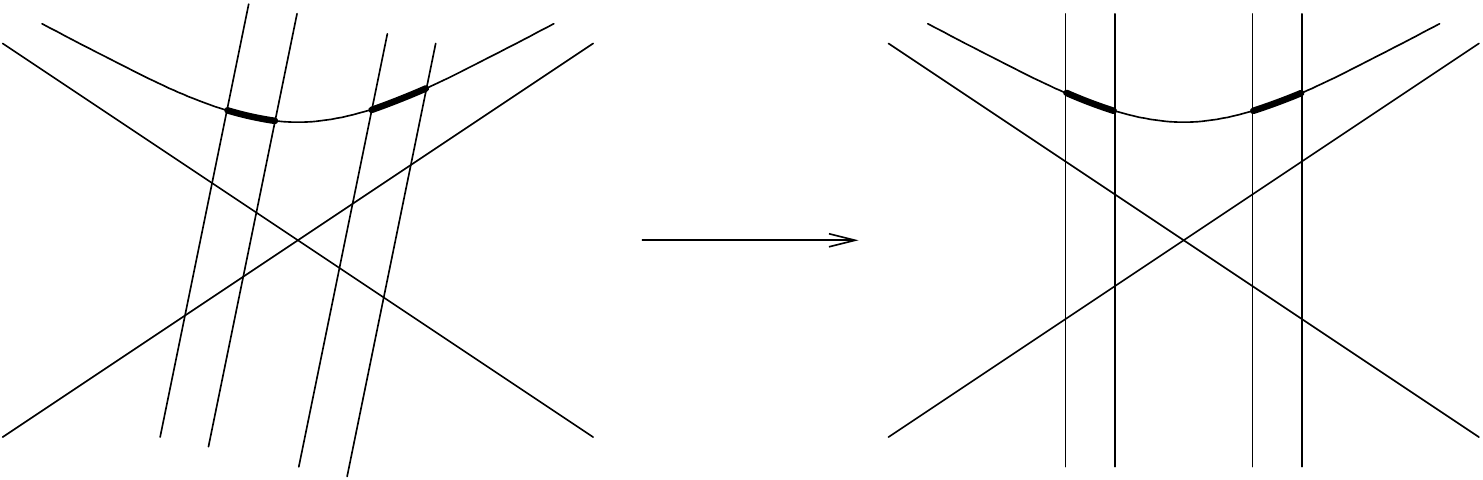}
\caption{Geodesic intersecting a hyperbolic homeoid: applying a hyperbolic isometry.}
\label{fig:EqualHeightsHyp}
\end{figure}

Up to now we have discussed the potentials and fields created by uniformly dense full-dimensional objects. When a shell becomes infinitely thin, we can view it as a hypersurface equipped with a variable (mass or charge) density. The field created by a charged hypersurface has a discontinuity along the hypersurface. The difference between the one-sided limits is a vector field that is orthogonal to the hypersurface (provided that the charge is H\"older continuous) and has the norm proportional to the charge density, see \cite{Kel67}.

The \emph{homeoidal density} is the renormalized limit of the thickness of a homeoidal shell.

\begin{lemma}
\label{lem:HomDens}
A homeoidal density on a spherical ellipsoid is inversely proportional to $\|\grad q\|$, where $q$ is a quadratic form defining the ellipsoid.

A homeoidal density on an ellipsoid in the hyperbolic space is inversely proportional to the Minkowski norm of the gradient $\|\grad q\|_{n,1}$.
\end{lemma}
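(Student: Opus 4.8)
The plan is to compute the homeoidal density directly from the geometry of the shell between two nearby level sets $\{q = \epsilon\}$ and $\{q = \epsilon + d\epsilon\}$, measuring thickness along the ambient space $\R^{n+1}$ and then relating this to the intrinsic thickness on $\Sph^n$ (respectively $\HH^n$). Recall that the homeoidal density is defined as the renormalized limiting thickness of the shell; concretely, at a point $x \in E$ it is the factor by which an area element of $E$ must be weighted so that the mass of the shell $\{\epsilon_1 \le q \le \epsilon_2\}$ is recovered as $d\epsilon \to 0$. So the first step is to express, at a point $x$ of the ellipsoid $E = C \cap \Sph^n$, the width of the shell in the direction normal to $E$ within $\Sph^n$.

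First I would work in the ambient space. The level hypersurfaces of $q$ in $\R^{n+1}$ have, at $x$, the normal direction $\grad q(x)$, and the Euclidean distance between the infinitesimally separated level sets $\{q = \epsilon\}$ and $\{q = \epsilon + d\epsilon\}$ is $d\epsilon / \|\grad q(x)\|$. However, the shell we care about is cut out on $\Sph^n$, so what matters is the length of the segment of the geodesic (or, infinitesimally, of the sphere's tangent direction transverse to $E$) caught between these two level sets. Since $E \subset \Sph^n$ and $C$ is a cone, $x \in C$ implies $\grad q(x)$ is orthogonal (in the Euclidean sense) to the position vector $x$: differentiating $q(tx) = t^2 q(x) = 0$ along the generator shows $\grad q(x)$ annihilates $x$, hence $\grad q(x)$ is tangent to $\Sph^n$ at $x$. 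Therefore the normal direction to $E$ inside $T_x\Sph^n$ is exactly $\grad q(x)/\|\grad q(x)\|$, and the intrinsic width of the shell equals its ambient width along this direction, namely $d\epsilon / \|\grad q(x)\|$ to first order. This gives the homeoidal density $\propto 1/\|\grad q(x)\|$ on the spherical ellipsoid.

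For the hyperbolic case the same argument runs with the Minkowski inner product in place of the Euclidean one. The ellipsoid is $E = C \cap \HH^n$ with $C$ an elliptic cone inside the light cone; the hyperbolic normal to $E$ inside $T_x\HH^n$ is the $\|\cdot\|_{n,1}$-gradient of $q$, and the cone relation $q(tx)=t^2q(x)=0$ again forces this gradient to be $\|\cdot\|_{n,1}$-orthogonal to $x$, hence tangent to $\HH^n$ at $x$; the Minkowski metric restricted to $T_x\HH^n$ is positive definite, so lengths there are genuine, and the shell width is $d\epsilon/\|\grad q(x)\|_{n,1}$. The main subtlety — and the step I expect to require the most care — is making precise the passage from the full-dimensional shell to the hypersurface density: one must check that the transverse direction used to measure thickness is indeed the metric normal to $E$ (not, say, the cone generator or the radial direction), which is exactly where the cone structure $q(tx) = t^2 q(x)$ is used, and one must verify that the first-order expansion of the width in $d\epsilon$ has no hidden dependence on the ambient embedding that would spoil the proportionality. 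Once the normal direction is correctly identified, the density formula is immediate from $d(q) = \langle \grad q, \cdot\rangle$ and the computation is routine.
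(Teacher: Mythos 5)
Your proposal is correct and follows essentially the same route as the paper: both arguments rest on the fact that the distance between nearby level sets is, to first order, $d\epsilon/\|\grad q\|$, together with the observation that the cone $q^{-1}(0)$ meets $\Sph^n$ (resp.\ $\HH^n$) orthogonally, so that the ambient gradient of $q$ coincides with the gradient of $q$ restricted to the sphere (resp.\ hyperboloid). Your use of Euler's identity $\langle \grad q(x), x\rangle = 2q(x) = 0$ to justify that orthogonality is a welcome explicit detail that the paper only asserts.
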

\begin{proof}
The distance between level sets of a function is inversely proportional, up to terms of higher order, to the norm of the gradient of the function. Hence the spherical homeoidal density is inversely proportional to the gradient of the restriction of $q$ to $\Sph^n$. Since the cone $\{q(x) = 0\}$ intersects $\Sph^n$ orthogonally, the gradient of $q$ and the gradient of $q|_{\Sph^n}$ coincide.

In the hyperbolic case the argument is the same, except that the orthogonality should be understood with respect to the Minkowski scalar product. In particular, the gradient of a function $q$ in the Minkowski space has the coordinates $\left( -\frac{\partial q}{\partial x_0}, \frac{\partial q}{\partial x_1}, \ldots, \frac{\partial q}{\partial x_n} \right)$.
\end{proof}

\begin{remark}
A charge distributed with a homeoidal density creates zero potential in the interior of the ellipsoid; hence the electrostatic field is orthogonal to the surface of the ellipsoid. In other words, if the charged particles are allowed to move freely within the surface, a homeoidal density will put them in equilibrium. The same happens in a solid conductor: free charges inside a body bounded by a surface concentrate on this surface according to its equilibrium density, see \cite[Section VII.1]{Kel67}.
\end{remark}

\subsection{Ivory's theorem}
\label{sec:IvoryThm}
Let us consider the spherical case first.
Choose an orthonormal basis for the quadratic form $q$ and write it as
\begin{equation}
\label{eqn:qDiag}
q(x) = \frac{x_1^2}{a_1} + \cdots + \frac{x_n^2}{a_n} - \frac{x_0^2}{b} = 0, \quad a_1, \ldots, a_n, b > 0.
\end{equation}
Consider the ellipsoid in the upper hemisphere:
\[
E = q^{-1}(0) \cap \Sph^n_+,
\]
where $\Sph^n_+ = \Sph^n \cap \{x_0 > 0\}$. Assume that all $a_i$ are distinct; without loss of generality, $a_1 > a_2 > \cdots > a_n > 0$. Then the associated quadratic forms
\[
q_\lambda(x) = \frac{x_1^2}{a_1-\lambda} + \cdots + \frac{x_n^2}{a_n-\lambda} - \frac{x_0^2}{b+\lambda}, \quad a_1 > \lambda > -b, \lambda \ne a_i
\]
give rise to a confocal family of spherical quadrics
\[
E_\lambda = q_\lambda^{-1}(0) \cap \Sph^n_+
\]
that split into $n$ subfamilies.
Similarly to the Euclidean case, every point on the $n$-sphere with $x_i \ne 0$ for all $i$ lies on $n$ confocal quadrics from different subfamilies; this gives rise to ellipsoidal coordinates on the sphere. At the same time, the cones $q_\lambda^{-1} (0)$, together with the  spheres centered at the origin, can be viewed as a degeneration of a confocal family in $\R^{n+1}$; this gives rise to the so-called sphero-conical coordinates in $\R^{n+1}$.

The following lemma gives a spherical analog of classical facts concerning confocal families in the Euclidean space.

\begin{lemma}
\label{lem:FLambda}
Let $\lambda \in (-b, a_n)$.
The linear map $f_\lambda \colon \R^{n+1} \to \R^{n+1}$ given by a diagonal matrix
\[
f_\lambda = \diag \left( \sqrt{\frac{a_1-\lambda}{a_1}}, \ldots, \sqrt{\frac{a_n-\lambda}{a_n}}, \sqrt{\frac{b+\lambda}b} \right)
\]
has the following properties:
\begin{enumerate}
\item
It maps the spherical ellipsoid $E$ to the spherical ellipsoid $E_\lambda$;
\item
The points $x \in E$ and $f_\lambda(x) \in E_\lambda$ lie on the same $n-1$ spherical quadrics confocal to $E$;
\item
The pull-back by $f_\lambda$ of a homeoidal measure on $E_\lambda$ is a homeoidal measure on $E$.
\end{enumerate}
\end{lemma}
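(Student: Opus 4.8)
The plan is to verify the three properties essentially by direct computation in the diagonal coordinates, exploiting the special form of $f_\lambda$ and of the quadratic forms $q_\mu$.

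First I would prove (1). If $x = (x_0, x_1, \ldots, x_n) \in E$, then $q(x) = 0$ and $\|x\|^2 = 1$; I want to show $q_\lambda(f_\lambda(x)) = 0$ and $\|f_\lambda(x)\|^2 = 1$ — wait, the latter is false, so the correct statement is that $f_\lambda(x)$, after rescaling to unit norm, lies on $E_\lambda$, or (more likely the intended reading) that $E_\lambda$ is the spherical ellipsoid cut out by the cone $q_\lambda^{-1}(0)$, and $f_\lambda$ maps the cone $q^{-1}(0)$ onto the cone $q_\lambda^{-1}(0)$. Indeed, writing $y = f_\lambda(x)$, so $y_i = \sqrt{(a_i-\lambda)/a_i}\, x_i$ and $y_0 = \sqrt{(b+\lambda)/b}\, x_0$, one computes
\[
q_\lambda(y) = \sum_{i=1}^n \frac{y_i^2}{a_i-\lambda} - \frac{y_0^2}{b+\lambda} = \sum_{i=1}^n \frac{x_i^2}{a_i} - \frac{x_0^2}{b} = q(x) = 0.
\]
Since $f_\lambda$ has positive diagonal entries it preserves the open half-space $\{x_0 > 0\}$, hence maps the component $E$ of the cone–sphere intersection to the component $E_\lambda$. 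This is the routine part.

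Next, property (2). Let $\mu \in (-b, a_n)$, $\mu \ne \lambda$, be such that $x \in E_\mu$, i.e. $q_\mu(x) = 0$. I must show $f_\lambda(x)$ lies on the confocal spherical quadric through $E$ with the same value of the "second coordinate", i.e. on $q_{\mu'}^{-1}(0)$ for an appropriate $\mu'$; the Euclidean analog (see Remark \ref{rem:LinearMap}) suggests $\mu' = \mu$ is not quite right and one should track how the confocal parameter transforms. The cleanest route is the classical one: a point $z$ lies on the confocal cone $q_\nu^{-1}(0)$ iff $\nu$ is a root of the degree-$n$ polynomial equation $q_\nu(z) = 0$ in $\nu$; the $n$ roots are exactly the $n$ confocal quadrics through $z$. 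So I would compute $q_\nu(f_\lambda(x))$ as a rational function of $\nu$ and show its zero set (other than $\nu$ corresponding to $E_\lambda$ itself) is obtained from the zero set of $q_\nu(x)$ by an explicit substitution $\nu \mapsto \nu - \lambda$ or similar, using the identity
\[
\frac{(f_\lambda(x))_i^2}{a_i - \nu} = \frac{a_i - \lambda}{a_i}\cdot\frac{x_i^2}{a_i-\nu}, \qquad \frac{(f_\lambda(x))_0^2}{b+\nu} = \frac{b+\lambda}{b}\cdot\frac{x_0^2}{b+\nu},
\]
and partial fractions to relate $\sum_i \frac{(a_i-\lambda)x_i^2}{a_i(a_i-\nu)}$ back to $q$ and $q$ at shifted parameters. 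This bookkeeping with the confocal parameter is where I expect the real work to be; the main obstacle is getting the parameter substitution exactly right so that "the same $n-1$ confocal quadrics" is a literal statement, and checking that the $x_0$-term (with $b+\lambda$ in place of $a_i - \lambda$) fits the same pattern with the correct sign.

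Finally, property (3). By Lemma \ref{lem:HomDens}, a homeoidal measure on a spherical ellipsoid $\{q=0\}$ is $\|\grad q\|^{-1}$ times the Riemannian surface measure, where $\grad$ is the Euclidean gradient in $\R^{n+1}$ (which agrees with the spherical gradient since the cone meets the sphere orthogonally). I would compute the pull-back under $f_\lambda$ of both factors: the Jacobian factor relating the surface measure on $E_\lambda$ to that on $E$, and the ratio $\|\grad q_\lambda\|(f_\lambda(x)) \big/ \|\grad q\|(x)$. Since $f_\lambda$ is linear with matrix $\diag(\sqrt{(a_i-\lambda)/a_i}, \sqrt{(b+\lambda)/b})$ and $\grad q_\lambda(y) = 2(\,-y_0/(b+\lambda), y_1/(a_1-\lambda), \ldots)$, while $\grad q(x) = 2(-x_0/b, x_1/a_1, \ldots)$, one sees componentwise that $\grad q_\lambda(f_\lambda(x)) = (f_\lambda)^{-1}\big(\text{const}\cdot\grad q(x)\big)$ after absorbing the diagonal scalings — the key cancellation is that $f_\lambda$ scales the $i$-th coordinate by $\sqrt{(a_i-\lambda)/a_i}$ while dividing by $a_i-\lambda$ instead of $a_i$ contributes the reciprocal. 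Combined with the change-of-variables formula for the surface measure under the linear map $f_\lambda$ restricted to the hypersurface, the two Jacobian contributions must combine to show the pull-back of $\|\grad q_\lambda\|^{-1}\,d\mathrm{vol}_{E_\lambda}$ equals $\|\grad q\|^{-1}\,d\mathrm{vol}_E$ up to a constant factor independent of $x$ (which is all that "is a homeoidal measure" requires). The subtlety here is keeping track of the induced-measure Jacobian of a non-orthogonal linear map restricted to a hypersurface; I would handle it by expressing the surface measure via the ambient volume form contracted with the unit normal, which turns everything into a determinant computation that the explicit diagonal form of $f_\lambda$ makes tractable.
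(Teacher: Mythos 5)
The central problem is in your treatment of part (1): you assert that $\|f_\lambda(x)\|=1$ ``is false'' and retreat to the weaker claim that $f_\lambda$ merely maps the cone $q^{-1}(0)$ onto the cone $q_\lambda^{-1}(0)$. In fact the norm identity is true for every $x\in E$ and is the whole point of the lemma: for $x$ on the cone one has
\[
\|f_\lambda(x)\|^2=\sum_{i=1}^n\frac{a_i-\lambda}{a_i}\,x_i^2+\frac{b+\lambda}{b}\,x_0^2
=\|x\|^2-\lambda\, q(x)=\|x\|^2 ,
\]
so $f_\lambda$, although not orthogonal, preserves the unit sphere precisely on the cone and therefore carries the spherical ellipsoid $E\subset\Sph^n$ onto $E_\lambda\subset\Sph^n$ literally, with no rescaling. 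Your proposed fix (rescale $f_\lambda(x)$ to unit norm) would destroy linearity and with it parts (2) and (3), as well as the use of $f_\lambda$ in the proof of Theorem \ref{LIconst}, where it must transport a homeoidal measure on $E$ to one on $E_\lambda$ inside the sphere. So this is not a matter of interpreting the statement; your version of (1) is strictly weaker and does not support the rest of the argument.

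For part (2), your root-tracking scheme is workable, but your expected substitution of the confocal parameter (something like $\nu\mapsto\nu-\lambda$) is wrong: the $n-1$ other confocal quadrics through $x$ and through $f_\lambda(x)$ have \emph{identical} parameters. The paper obtains this from the one-line identity $q_\mu\circ f_\lambda=\frac{\lambda}{\mu}\,q+\bigl(1-\frac{\lambda}{\mu}\bigr)q_\mu$, which for $q(x)=0$ gives $q_\mu(f_\lambda(x))=\bigl(1-\frac{\lambda}{\mu}\bigr)q_\mu(x)$; your partial-fraction computation, if carried out, is exactly this identity, so the root $\mu=0$ (the quadric $E$ itself) is replaced by $\mu=\lambda$ (the quadric $E_\lambda$) and all other roots are fixed. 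Part (3) is in the right spirit and essentially matches the paper, which writes $\omega=dr\wedge\frac{dq}{\|\grad q\|}\wedge\omega_E$ and pulls back using $f_\lambda^*\omega=c\,\omega$, $f_\lambda^*(dq_\lambda)=dq$, and $f_\lambda^*(dr)=dr$ along $E$ --- note that this last point again depends on the norm identity you discarded, and that $E$ has codimension two in $\R^{n+1}$, so the contraction must involve both $dr$ and $dq$, not a single unit normal.
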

\begin{proof}
Since $q_\lambda = q \circ f_\lambda^{-1}$, the map $f_\lambda$ sends the cone $q^{-1}(0)$ to the cone $q_\lambda^{-1}(0)$. Besides
\[
\left.
\begin{matrix}
x_1^2 + \cdots + x_n^2 + x_0^2 = 1\\
\frac{x_1^2}{a_1} + \cdots + \frac{x_n^2}{a_n} - \frac{x_0^2}{b} = 0
\end{matrix}
\right\}
\Rightarrow \frac{a_1-\lambda}{a_1} x_1^2 + \cdots + \frac{a_n-\lambda}{a_n} x_n^2 + \frac{b+\lambda}b x_0^2 = 1,
\]
which implies that the image of $E$ is contained in the unit sphere. Thus $f_\lambda(E) = E_\lambda$.

For the second part it suffices to prove that
\[
\left.
\begin{matrix}
q(x) = 0\\
q_\mu(x) = 0
\end{matrix}
\right\}
\Rightarrow q_\mu(f_\lambda(x)) = 0.
\]
This follows from the linear relation
\[
\frac\lambda\mu q + \left( 1 - \frac\lambda\mu \right) q_\mu = q_\mu \circ f_\lambda,
\]
which can be checked by a direct computation.

Let $\omega_E$ and $\omega_{E_\lambda}$ denote the volume elements on $E$ and $E_\lambda$, respectively. By Lemma \ref{lem:HomDens}, $\frac{\omega_E}{\|\grad q\|}$ is a homeoidal measure on $E$, hence we need to show that
\begin{equation}
\label{eqn:Pullback}
f_\lambda^*\left(\frac{\omega_{E_\lambda}}{\|\grad q_\lambda\|}\right) = c \cdot \frac{\omega_E}{\|\grad q\|}
\end{equation}
for some constant $c$.
We have
\[
dr \wedge \frac{dq}{\|\grad q\|} \wedge \omega_E = \omega = dr \wedge \frac{dq_\lambda}{\|\grad q_\lambda\|} \wedge \omega_{E_\lambda},
\]
where $\omega$ denotes the volume element of $\R^{n+1}$. On the other hand, since $f_\lambda$ is a linear map, $f_\lambda^*(\omega) = c \cdot \omega$ for some constant $c$. Taking into account that $f_\lambda^*(dr) = dr$ when restricted to $E$ and $E_\lambda$, and that $f_\lambda^*(dq_\lambda) = dq$, we obtain equation \eqref{eqn:Pullback}.
\end{proof}

\begin{remark}
The second part of Lemma \ref{lem:FLambda} immediately implies the spherical Ivory lemma in some special cases. Take a parallelepiped with one vertex $x \in E$ and the opposite vertex $y \in E_\lambda$. Then one of the diagonals opposite to $xy$ is $f_\lambda(x) f_\lambda^{-1}(y)$. These two diagonals have equal lengths because
\[
\langle x, y \rangle = \langle f_\lambda(x), f_\lambda^{-1}(y) \rangle,
\]
due to the fact that  the operator $f_\lambda$ is self-adjoint. On $\Sph^2$, this proves the Ivory lemma in full generality. In higher dimensions, there are other diagonals opposite to $xy$. In order to prove that they have the same length one needs a generalization of Lemma \ref{lem:FLambda} to linear maps within subfamilies of confocal quadrics other than ellipsoids.
\end{remark}

In the hyperbolic case, the following modifications are needed. First, we need to diagonalize the quadratic form $q$ simultaneously with the Minkowski scalar product. The simultaneous diagonalization of indefinite quadratic forms is not possible in general (a simple example: $x^2 - y^2$ and $xy$). However, since $q$ defines an ellipsoid in the hyperbolic space, we can use the following lemma.
\begin{lemma}
Let $p$ and $q$ be two non-degenerate quadratic forms of index $1$ such that the light cone of $q$ lies inside the light cone of $p$. Then $p$ and $q$ can be simultaneously diagonalized.
\end{lemma}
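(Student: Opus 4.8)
The plan is to reduce the statement to the classical fact that a positive-definite form and an arbitrary symmetric form can always be simultaneously diagonalized, by exhibiting an auxiliary positive-definite form in the pencil spanned by $p$ and $q$. First I would set up coordinates in which the Minkowski-type form $p$ is $p(x) = -x_0^2 + x_1^2 + \cdots + x_n^2$; this is harmless since $p$ has index $1$ and any two forms of index $1$ are linearly equivalent, and an overall linear change of coordinates does not affect simultaneous diagonalizability. The hypothesis ``the light cone of $q$ lies inside the light cone of $p$'' should then be unpacked: it says precisely that $q < 0$ on the open solid cone $\{p < 0\}$ (after possibly replacing $q$ by $-q$, which is legitimate since index $1$ is preserved and diagonalizability is unaffected by sign). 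Equivalently, every $p$-timelike vector is $q$-timelike.

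The key step is to consider the one-parameter family $r_t = q - t\,p$ for $t \ge 0$ and argue that for $t$ large enough $r_t$ is \emph{negative definite}. Indeed, on the unit sphere $\{x_1^2 + \cdots + x_n^2 + x_0^2 = 1\}$, decompose into the compact set $K_- = \{p \le 0\}$ and its complement $K_+ = \{p > 0\}$, both compact. On $K_-$ we have $q < 0$ strictly — this uses the light-cone containment together with the fact that $q$ does not vanish on the boundary sphere of $\{p \le 0\}$ except possibly where $p = 0$; one must check the boundary behaviour, that $q \le 0$ on the closed cone $\{p \le 0\}$ and $q = 0$ there only at the origin, which follows because a zero of $q$ on $\partial\{p<0\}$ other than $0$ would force the $q$-light cone to touch the $p$-light cone, contradicting strict containment (here ``inside'' must be read as strict; if the paper allows tangency the statement is false, so I assume strictness). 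Hence $q < 0$ on $K_-$, and since $-tp < 0$ on $K_+$ with $-tp \to -\infty$ uniformly as $t \to \infty$ while $q$ is bounded, for $t$ sufficiently large $r_t < 0$ on the whole unit sphere, i.e. $r_t$ is negative definite, so $-r_t = tp - q$ is positive definite.

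Having produced a positive-definite form $s = tp - q$ in the pencil, I finish as follows. By the spectral theorem applied to the pair $(s, p)$ — or directly: choose a linear change of variables making $s$ the standard Euclidean form, then diagonalize the resulting symmetric matrix of $p$ by an orthogonal transformation — we obtain a basis in which $s$ and $p$ are simultaneously diagonal. In that same basis $q = tp - s$ is then automatically diagonal as well. This gives the simultaneous diagonalization of $p$ and $q$ claimed in the lemma. The main obstacle is the boundary analysis in the middle step: one has to be careful that the light-cone containment is used in its strict form and that $q$ does not degenerate on the closed cone $\{p \le 0\}$ away from the origin, since otherwise the large-$t$ argument only yields negative \emph{semi}definiteness of $r_t$; I expect this to be the only point requiring genuine care, the rest being the standard simultaneous-diagonalization routine.
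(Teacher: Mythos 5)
Your overall strategy---produce a definite form in the pencil spanned by $p$ and $q$ and then invoke the classical Weierstrass theorem that a pair of forms, one of which is definite, is simultaneously diagonalizable---is a legitimate alternative to the paper's argument, but the step that is supposed to produce the definite form is broken, and that step is where the entire content of the lemma lies. First, the hypothesis is unpacked backwards: with $p=-x_0^2+x_1^2+\cdots+x_n^2$, ``the light cone of $q$ lies inside the light cone of $p$'' means $\{q=0\}\setminus\{0\}\subset\{p<0\}$, hence $\{q\le 0\}\setminus\{0\}\subset\{p<0\}$; the $q$-cone is the \emph{narrower} one. Consequently $q>0$ on $\{p=0\}\setminus\{0\}$ and also on an open region inside $\{p<0\}$, so your assertion that $q<0$ on $K_-=\{p\le 0\}\cap\Sph^n$ is false (and replacing $q$ by $-q$ does not repair this; it also changes the index from $1$ to $n$). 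Second, and independently of the orientation of the containment, the limit $t\to\infty$ cannot yield a definite form: $t^{-1}(tp-q)\to p$, which is indefinite, so $tp-q$ is indefinite for all large $t$. Concretely, on $K_-$ one has $-tp\ge 0$, so $r_t=q-tp\to+\infty$ at every point with $p<0$; the claim that $r_t<0$ on $K_-$ ``because $q<0$ there'' simply ignores the term $-tp$. The definite members of the pencil do exist, but only for $t$ in a \emph{bounded} intermediate range: $q-tp$ is positive definite exactly when $\sup_{\{p<0\}\cap\Sph^n}(q/p)<t<\inf_{\{p>0\}\cap\Sph^n}(q/p)$, and proving that this interval is nonempty is precisely the nontrivial point, which your compactness argument does not supply. (Your side remark that tangency of the light cones would make the lemma false is correct, and in fact such tangency is exactly what makes the interval above empty.)

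For comparison, the paper sidesteps the pencil entirely: for $v$ in the solid cone $L_p$ the hyperplane $v^{\perp_p}$ misses the interior of $L_p\supset L_q$, so $v\mapsto(v^{\perp_p})^{\perp_q}$ maps the projectivized cone (a ball) to itself and has a fixed point by Brouwer's theorem; for a fixed direction $v$ one gets $v^{\perp_p}=v^{\perp_q}$, and the restrictions of $p$ and $q$ to this common orthogonal complement are positive definite, so the standard simultaneous diagonalization finishes the proof. If you want to keep your route, you must replace the ``$t$ large enough'' mechanism by an actual proof that the interval of definite parameters is nonempty; as it stands, the proposal has a genuine gap at its central step.
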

\begin{proof}
Let $L_p$ and $L_q$ be the interiors of the light cones, that is, the sets of vectors whose $p$- and $q$-norms squared are non-positive. For $v \in L_p$, the orthogonal complement $v^{\perp_p}$ is disjoint with the interior of $L_p$, and likewise for $L_q$.  

Therefore we have a map
\[
L_p \to L_q, \quad v \mapsto (v^{\perp_p})^{\perp_q}
\]
whose  projectivization has a fixed point by Brouwer's theorem. Choose it as the first basis direction. The restrictions of $p$ and $q$ to the common orthogonal complement are positive definite, hence simultaneously diagonalizable.
\end{proof}

Due to the above lemma, we can assume that $q$ has the form \eqref{eqn:qDiag} with $a_i < b$ for all $i$ (which ensures that the light cone of $q$ lies within the light cone of the Minkowski scalar product). The associated quadratic forms are
\[
q_\lambda(x) = \frac{x_1^2}{a_1-\lambda} + \cdots + \frac{x_n^2}{a_n-\lambda} - \frac{x_0^2}{b-\lambda}, \quad a_1 > \lambda \ne a_i
\]
(their duals form a pencil spanned by the duals of $q$ and of the Minkowski scalar product). A hyperbolic analog of Lemma \ref{lem:FLambda} holds; the argument remains the same.

\begin{theorem} \label{LIconst}
A spherical or hyperbolic ellipsoid $E$ charged with a homeoidal density creates an electrostatic force field that vanishes inside $E$ (and $-E$, in the spherical case) and has the confocal ellipsoids $E_\lambda$ (and $-E_\lambda$, in the spherical case) as equipotential surfaces.
\end{theorem}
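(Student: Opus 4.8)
The plan is to follow Ivory's and Chasles's original strategy, now implemented on $\Sph^n$ or $\HH^n$: first establish that the field vanishes inside $E$, then promote this to the statement that the confocal ellipsoids $E_\lambda$ are equipotential surfaces by transporting the charge from $E$ to $E_\lambda$ via the linear map $f_\lambda$ of Lemma \ref{lem:FLambda}. For the interior statement, the key observation is that a homeoidal charge density is, by definition, the renormalized thickness of a homeoidal shell (Lemma \ref{lem:HomDens}). So the potential of an infinitely thin homeoid at a point $p$ interior to $E$ is the limit of the potential of a thin full-dimensional homeoidal shell. By Lemma \ref{lem:EqualHeights}, every geodesic through $p$ meets this shell in two segments of equal length; exactly as in Newton's argument from Section \ref{sec:Newton} (the norm of the field at geodesic distance $r$, namely $\sin^{-(n-1)}r$ or $\sinh^{-(n-1)}r$, cancels the $\sin^{n-1}r$ or $\sinh^{n-1}r$ factor in the volume element of the geodesic sphere), the contributions of the two opposite truncated cones cancel. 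Hence the field — and, up to an additive constant, the potential — vanishes inside $E$ (and inside $-E$ in the spherical case, by the same argument applied to the antipodal region, using $u(\pi-r)=-u(r)$).

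Next I would show that the potential is constant on each confocal ellipsoid $E_\lambda$. Fix $\lambda\in(-b,a_n)$ (resp.\ the hyperbolic range) and let $f_\lambda$ be the self-adjoint linear map of Lemma \ref{lem:FLambda}, which carries $E$ onto $E_\lambda$ and, by part (3), pulls the homeoidal measure on $E_\lambda$ back (up to a constant) to the homeoidal measure on $E$. The potential of the homeoid $E_\lambda$ at a point $y$ is
\[
\Phi_{E_\lambda}(y) = \int_{E_\lambda} u(\|z-y\|)\,d\mu_{E_\lambda}(z) = c\int_E u\big(\|f_\lambda(x)-y\|\big)\,d\mu_E(x),
\]
where $\|\cdot\|$ denotes the spherical (resp.\ hyperbolic) distance and the change of variables $z=f_\lambda(x)$ is Lemma \ref{lem:FLambda}(3). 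The crucial point is the Ivory symmetry: for $x\in E$ and $y\in E_\lambda$ one has $\|f_\lambda(x)-y\| = \|x - f_\lambda^{-1}(y)\|$, because in the ambient $\R^{n+1}$ (with its Euclidean resp.\ Minkowski scalar product) $\langle f_\lambda(x),y\rangle = \langle x, f_\lambda^{-1}(y)\rangle$ — here we use that $f_\lambda$ is self-adjoint together with the fact that the spherical/hyperbolic distance is a function of the ambient scalar product. Now the key move: take two confocal ellipsoids $E_\mu$ and $E_\lambda$ and two points $y\in E_\mu$, $y'\in E_\mu$ on the same ellipsoid; writing $y = f_\mu(w)$, $y' = f_\mu(w')$ with $w,w'\in E$, we get $\Phi_{E_\lambda}(y) = c\int_E u(\|x - f_\lambda^{-1}f_\mu(w)\|)\,d\mu_E(x)$ and we want this to be independent of which point $w\in E$ we chose, i.e.\ of $y\in E_\mu$.

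The cleanest way to finish is to first treat the case $E_\mu = E$ itself and a point $y\in E$: then $\Phi_{E_\lambda}(y) = c\int_E u(\|x - f_\lambda^{-1}(y)\|)\,d\mu_E(x) = c\,\Phi_E(f_\lambda^{-1}(y))$ by the Ivory symmetry applied in the form $\|f_\lambda(x)-y\|=\|x-f_\lambda^{-1}(y)\|$ and relabeling. But $f_\lambda^{-1}(y)$ lies on the confocal ellipsoid $E_{\lambda'}$ through the point that is "interior" relative to $E$ — more precisely, by Lemma \ref{lem:FLambda}(2) the point $f_\lambda^{-1}(y)$ and $y$ lie on the same $n-1$ confocal quadrics, and as $y$ ranges over $E$, $f_\lambda^{-1}(y)$ ranges over the ellipsoid $E_{-\lambda/(1-\lambda/\dots)}$ lying strictly inside $E$ (one checks the relevant parameter lies in the "interior" range using the linear relation $\frac\lambda\mu q + (1-\frac\lambda\mu)q_\mu = q_\mu\circ f_\lambda$). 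Since the potential $\Phi_E$ is constant — indeed, by the first part of the proof, identically equal to its interior value — on the whole closed region bounded by $E$, and in particular on that inner confocal ellipsoid, $\Phi_{E_\lambda}(y)$ is the same for all $y\in E$. For $y$ on a general $E_\mu$ one composes the maps: $\Phi_{E_\lambda}$ restricted to $E_\mu$ is a constant multiple of $\Phi_{E}$ restricted to an inner confocal ellipsoid, again constant. Letting $\mu$ vary shows $\Phi_{E_\lambda}$ is constant on each confocal ellipsoid, i.e.\ the $E_\mu$ (and $-E_\mu$) are equipotential surfaces; that the field vanishes inside $E$ and $-E$ is the first part. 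I expect the main obstacle to be the bookkeeping in the last step — verifying that the parameter of the "inner" confocal ellipsoid $f_\lambda^{-1}(E)$ indeed falls in the range where the interior-vanishing conclusion of the first part applies, and handling the subfamily structure (the confocal family splits into $n$ pieces, and $f_\lambda$ only moves within the ellipsoid subfamily), together with making the Ivory identity $\|f_\lambda(x)-y\| = \|x-f_\lambda^{-1}(y)\|$ rigorous in the Lorentzian setting where "distance" is $\operatorname{arccosh}$ of a scalar product and one must track signs and the sheet of the hyperboloid.
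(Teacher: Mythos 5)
Your overall strategy --- Newton's cone argument plus Lemma \ref{lem:EqualHeights} for the vanishing inside $E$, then transport of the homeoidal charge by $f_\lambda$ combined with the Ivory lemma for the equipotential claim --- is the same as the paper's, and your first part is correct. The second part, however, rests on a false identity. Self-adjointness of $f_\lambda$ gives $\langle f_\lambda(x), y\rangle = \langle x, f_\lambda(y)\rangle$, \emph{not} $\langle x, f_\lambda^{-1}(y)\rangle$; the inner-product identity behind the Ivory lemma is $\langle f_\lambda(x), f_\lambda^{-1}(y)\rangle = \langle x, y\rangle$, i.e.\ you must apply $f_\lambda$ to one argument and $f_\lambda^{-1}$ to the other. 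Your version $\|f_\lambda(x)-y\| = \|x - f_\lambda^{-1}(y)\|$ would require $f_\lambda^2 = \mathrm{id}$ (you have in effect treated ``self-adjoint'' as ``orthogonal''). Note also that for $y \in E$ the point $f_\lambda^{-1}(y)$ does not lie on $\Sph^n$ at all --- $f_\lambda$ preserves the sphere only along the cone $q^{-1}(0)$ --- so $\|x - f_\lambda^{-1}(y)\|$ is not a well-defined spherical distance and $f_\lambda^{-1}(E)$ is not a confocal spherical ellipsoid lying inside $E$.

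This is not a repairable typo but a directional error. The correct identity, applied to your integral, gives for $y\in E$ that $\Phi_{E_\lambda}(y) = c\,\Phi_E(f_\lambda(y))$ with $f_\lambda(y)\in E_\lambda$; equivalently $\Phi_E(z) = c^{-1}\Phi_{E_\lambda}\bigl(f_\lambda^{-1}(z)\bigr)$ for $z\in E_\lambda$. Read from right to left (as the paper does), this proves the theorem: when $E_\lambda$ is exterior to $E$, the point $f_\lambda^{-1}(z)$ ranges over $E$, which lies in the \emph{interior} of $E_\lambda$, where $\Phi_{E_\lambda}$ is constant by the first part applied to $E_\lambda$; hence $\Phi_E$ is constant on $E_\lambda$. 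Read in your direction, the argument collapses: if $E_\lambda$ is exterior to $E$, the conclusion ``$\Phi_{E_\lambda}$ is constant on $E$'' is already an instance of the first part and says nothing about $\Phi_E$ outside $E$; if $E_\lambda$ is interior to $E$, then the relevant image $f_\lambda(E)=E_\lambda$ is replaced by a set exterior to $E$, where constancy of $\Phi_E$ is exactly what is to be proved --- circular. The same issue sinks the ``general $E_\mu$'' step, since $f_\lambda^{-1}f_\mu(w)$ leaves the sphere and the would-be inner ellipsoid need not be inner. The fix is to set the integral up as the paper does: start from $\Phi_E(x)$ for $x\in E_\lambda$, change variables by $f_\lambda$, use $\|x - f_\lambda^{-1}(z)\| = \|f_\lambda^{-1}(x) - z\|$ (both new points lie on $E\subset\Sph^n$), and land at $f_\lambda^{-1}(x)\in E$, interior to $E_\lambda$.
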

\begin{proof}
Lemma \ref{lem:EqualHeights} implies that the field inside an arbitrary homeoidal shell vanishes: the forces exerted on a point inside of a homeoid by two diametrally opposite truncated cones compensate each other, see Figure \ref{fig:NewtonToIvory} and the first part of the proof of Theorem \ref{thm:NewtonSph}. In the limit, as a homeoid around $E$ becomes infinitely thin, its electrostatic field tends to the field created by a homeoidal density on $E$ and thus vanishes inside $E$.

Take a point $x$ outside $E$. The potential created at a point $x$ by a homeoidal density on $E$ equals
\[
U(x) = \int_E u \, do,
\]
where $u(y) = u(\|x-y\|)$ is the potential of a point charge at $x$ described in Section \ref{sec:GravPot}, and $o$ is a homeoidal measure on $E$. 

\begin{figure}[ht]
\centering
\begin{picture}(0,0)%
\includegraphics{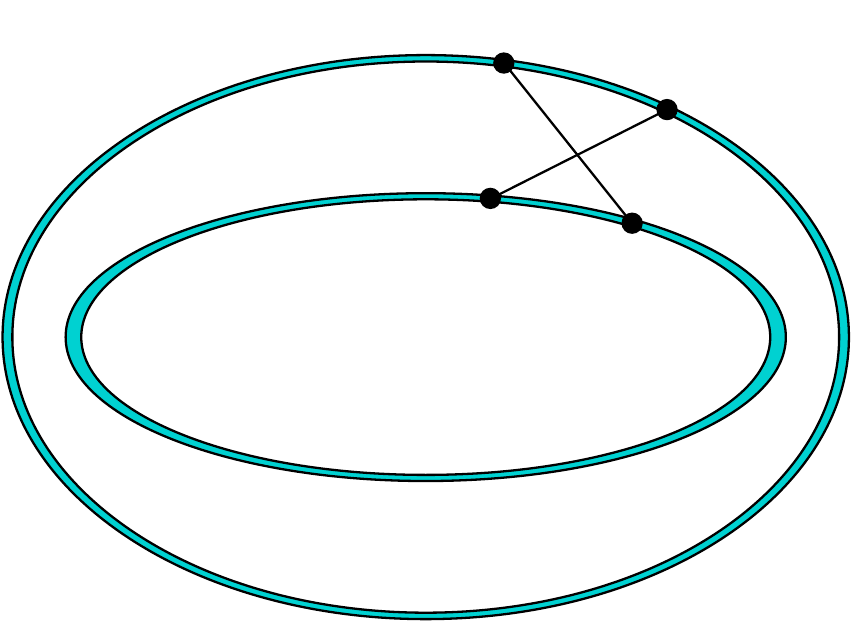}%
\end{picture}%
\setlength{\unitlength}{5801sp}%
\begingroup\makeatletter\ifx\SetFigFont\undefined%
\gdef\SetFigFont#1#2#3#4#5{%
  \reset@font\fontsize{#1}{#2pt}%
  \fontfamily{#3}\fontseries{#4}\fontshape{#5}%
  \selectfont}%
\fi\endgroup%
\begin{picture}(2780,2025)(-264,-540)
\put(1640,593){\makebox(0,0)[lb]{\smash{{\SetFigFont{10}{12.0}{\rmdefault}{\mddefault}{\updefault}{\color[rgb]{0,0,0}$f_\lambda^{-1}(x)$}%
}}}}
\put(883,661){\makebox(0,0)[lb]{\smash{{\SetFigFont{10}{12.0}{\rmdefault}{\mddefault}{\updefault}{\color[rgb]{0,0,0}$f_\lambda^{-1}(z)$}%
}}}}
\put(1960,1194){\makebox(0,0)[lb]{\smash{{\SetFigFont{10}{12.0}{\rmdefault}{\mddefault}{\updefault}{\color[rgb]{0,0,0}$x$}%
}}}}
\put(1324,1362){\makebox(0,0)[lb]{\smash{{\SetFigFont{10}{12.0}{\rmdefault}{\mddefault}{\updefault}{\color[rgb]{0,0,0}$z$}%
}}}}
\put(519, 57){\makebox(0,0)[lb]{\smash{{\SetFigFont{10}{12.0}{\rmdefault}{\mddefault}{\updefault}{\color[rgb]{0,0,0}$E$}%
}}}}
\put(498,-375){\makebox(0,0)[lb]{\smash{{\SetFigFont{10}{12.0}{\rmdefault}{\mddefault}{\updefault}{\color[rgb]{0,0,0}$E_\lambda$}%
}}}}
\end{picture}%
\caption{Proof of the Ivory theorem.}
\label{Iproof}
\end{figure}

Let $E_\lambda$ be a confocal ellipsoid through $x$, see Figure \ref{Iproof}. Using the map $f_\lambda$ from Lemma \ref{lem:FLambda}, one transforms the above integral to an integral over $E_\lambda$:
\[
\int_E u\, do = \int_{E_\lambda} u \circ f_\lambda^{-1} \, do_\lambda,
\]
where $o_\lambda$ is a homeoidal measure on $E_\lambda$, due to the third part of Lemma \ref{lem:FLambda}. For every $z \in E_\lambda$, we have
\[
u \circ f_\lambda^{-1}(z) = u(\|x - f_\lambda^{-1}(z)\|) = u(\|f_\lambda^{-1}(x) - z\|).
\]
Indeed, by the second part of Lemma \ref{lem:FLambda}, the geodesic segments $x f_\lambda^{-1}(z)$ and $f_\lambda(x)z$ span the same coordinate parallelepiped; they are of the same length by the Ivory lemma. It follows that $u \circ f_\lambda^{-1}$ is the potential of a point charge at $f_\lambda^{-1}(x)$. Hence
\[
\int_{E_\lambda} u \circ f_\lambda^{-1} \, do_\lambda = U_\lambda(f_\lambda^{-1}(x))
\]
is the potential created at point $f_\lambda^{-1}(x)$ by the measure $o_\lambda$ on $E_\lambda$. But the point $f_\lambda^{-1}(x)$ lies inside the ellipsoid $E_\lambda$. Since by the first part of the theorem the potential of $o_\lambda$ is constant inside $E_\lambda$, the potential of the measure $o$ is the same at all points $x \in E_\lambda$.
\end{proof}

\subsection{Arnold's theorem}
\subsubsection{Hyperbolic surfaces}
\label{sec:HypSurf}
An algebraic surface $M$ of degree $d$ in $\R^n$ is called \emph{strictly hyperbolic} with respect to a point $x \in \R^n$ if $x \notin M$ and every line through $x$ intersects the projective closure of $M$ in $d$ distinct points.
If every line through $x$ intersects $M$ in $d$ not necessarily distinct points (but counting the algebraic multiplicities) then $M$ is called \emph{hyperbolic} with respect to $x$. The \emph{hyperbolicity domain} of $M$ is the union of points $x$ such that $M$ is (strictly) hyperbolic with respect to $x$. A surface is called (strictly) hyperbolic if its hyperbolicity domain is non-empty.

By a result of Nuij \cite{Nuij68}, the space of strictly hyperbolic surfaces is open and contractible. This implies \cite{HW07} that every strictly hyperbolic surface consists of $\lfloor \frac{d}{2} \rfloor$ nested projective ovaloids (that is hypersurfaces whose projective closure is isotopic to a sphere) and (for $d$ odd) one more component isotopic to $\R P^{n-1} \subset \R P^n$. The innermost ovaloid is projectively convex, and its interior is the hyperbolicity domain of the surface. See Figure \ref{fig:HypDeg3} for examples of hyperbolic curves of degree $3$.
Also by \cite{Nuij68}, every hyperbolic surface is a limit of strongly hyperbolic ones.

\begin{figure}[ht]
\begin{center}
\includegraphics[height=4cm]{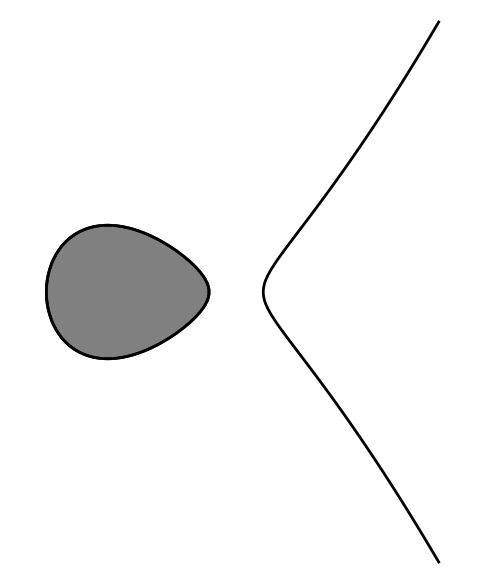} \hspace{.5cm} \includegraphics[height=4cm]{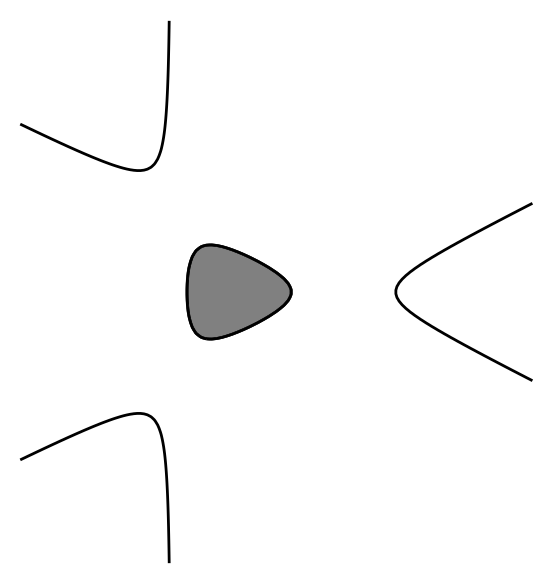} \hspace{.5cm} \includegraphics[height=4cm]{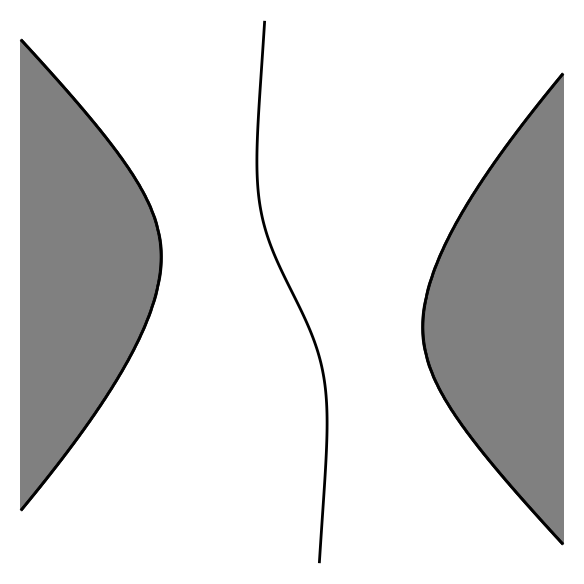}
\end{center}
\caption{Hyperbolic curves of degree $3$. The hyperbolicity domain is shaded.}
\label{fig:HypDeg3}
\end{figure}

\begin{example}
The curve $x^4 + y^4 = 1$ is not hyperbolic.

Ellipsoids and hyperboloids of two sheet are strictly hyperbolic; hyperboloids of one sheet are not.

The union of $d$ hyperplanes is hyperbolic, although not strictly. Its hyperbolicity domain is the complement to the hyperplanes.
\end{example}

Hyperbolic polynomials appeared in the works of Petrovsky \cite{Pet45} and G\aa{}rding \cite{Gar51} on partial differential equations. In the recent decades they found applications in various other domains of mathematics, see \cite{Pem12}.

\subsubsection{Arnold's theorem: Euclidean case}
\label{sec:Layer}
In the following we assume that the degree $d$ is at least $2$.
For a strictly hyperbolic surface $M = p^{-1}(0)$, a \emph{standard layer} is the shell
\[
\{x \in \R^n \mid 0 \le p(x) \le \epsilon\},
\]
where $\epsilon$ is small enough for the surface $p^{-1}(\epsilon)$ to be also strictly hyperbolic. Fix a point $x$ in the hyperbolicity domain of $M$. Charge a component of the standard layer positively if the corresponding component of $p^{-1}(0)$ lies ``closer'' to $x$ than the corresponding component of $p^{-1}(\epsilon)$, and negatively otherwise. For example, if $M$ is a hyperbola, then the layers along its branches get different signs. The intersection of the hyperbolicity domains of $p^{-1}(0)$ and $p^{-1}(\epsilon)$ will be called the hyperbolicity domain of the layer.

\begin{theorem}[Arnold \cite{Arn82}]
For every hyperbolic surface, a charged standard layer creates a zero electrostatic field in its hyperbolicity domain.
\end{theorem}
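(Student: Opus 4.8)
The plan is to run Newton's thin--cone argument from Section \ref{sec:Newton}, but now along a line that meets $M$ in $d$ points, with the cancellation supplied by the vanishing of the residue at infinity of $\frac{dt}{q(t)}$ for a polynomial $q$ of degree at least $2$. Fix a point $x$ in the hyperbolicity domain of the layer $D=\{0\le p\le\epsilon\}$; since $x\notin M$ we have $p(x)\ne 0$, and for $\epsilon$ small the intersection of $D$ with any ray from $x$ is a union of short intervals clustered near the points of $M$. Writing the field in polar coordinates $y=x+rv$, $v\in\Sph^{n-1}$, $r>0$, one has $dy=r^{n-1}\,dr\,d\phi$ and $\frac{y-x}{\|y-x\|^n}=\frac{v}{r^{n-1}}$, so the $r^{n-1}$ factors cancel exactly as in the Newtonian computation and, up to a nonzero constant,
\[
F(x)=\int_{\Sph^{n-1}} v\, g(v)\, d\phi,\qquad g(v)=\int_0^\infty \rho(x+rv)\,dr,
\]
where $\rho=\pm1$ is the charge density (and $0$ off $D$), so that $g(v)$ is the \emph{signed thickness} of the charged layer along the ray in direction $v$. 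Splitting $\Sph^{n-1}$ into two hemispheres glued by $v\mapsto -v$ gives $F(x)=\int_{\Sph^{n-1}_+}v\,\bigl(g(v)-g(-v)\bigr)\,d\phi$, so it suffices to prove $g(v)=g(-v)$ for (almost) every $v$.

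Fix a line $\ell$ through $x$, parametrized by $t$ with $x$ at $t=0$, and set $q(t)=p(x+tv)$. By hyperbolicity $q$ has $d$ real roots, and $q(0)\ne 0$. For $\epsilon$ small the layer meets $\ell$ in one short interval clinging to each root of $q$, with the root as its endpoint on $p^{-1}(0)$ and the other endpoint on $p^{-1}(\epsilon)$. A one-line check of the four sign cases shows that on such an interval the charge is $\sigma=\sign\bigl(t\,q'(t)\bigr)$: the $p^{-1}(0)$--endpoint is the one of smaller $|t|$ precisely when $q$ increases away from $0$ as $|t|$ grows there, i.e. when $t\,q'(t)>0$, and by definition the component is charged positively exactly when that endpoint is the one closer to $x$. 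Hence $\sign(t)\,\sigma(t)=\sign\bigl(q'(t)\bigr)$ on $D\cap\ell$, and therefore
\[
g(v)-g(-v)=\int_{\ell}\sign(t)\,\rho(x+tv)\,dt=\int_{\{0<q(t)<\epsilon\}}\sign\bigl(q'(t)\bigr)\,dt.
\]
By the coarea formula, pushing this forward under $q$ replaces $\sign(q')\,dt=\frac{q'}{|q'|}\,dt$ by $\sum_{q(t)=w}q'(t)^{-1}$ integrated over $w\in(0,\epsilon)$; and for every regular value $w$ the quantity $\sum_{q(t)=w}q'(t)^{-1}$ is the sum of the residues of the rational $1$-form $\frac{dt}{q(t)-w}$ over its finite poles, which is zero because $\deg q\ge 2$ makes the residue at infinity vanish. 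Thus $g(v)-g(-v)=0$ and $F(x)=0$.

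Several points are routine but deserve mention, and one is where the real care lies. First, $d\ge 2$ is used exactly once, in the residue computation, and it is genuinely indispensable: a single uniformly charged hyperplane produces a nonzero field. Second, directions $v$ for which $\ell$ meets $M$ at infinity (so that $\deg q<d$) form a set of measure zero on $\Sph^{n-1}$ on which $g$ stays bounded, so they do not affect $F(x)$; alternatively the projective closure absorbs them. Third, the theorem is stated for all hyperbolic surfaces: since by Nuij's theorem each such surface is a limit of strictly hyperbolic ones, and both the standard layer and its signed charge depend continuously on the polynomial, the general case follows by passing to the limit. The step I expect to be the main obstacle is the bookkeeping of the charge signs: one must know that ``the component of $p^{-1}(0)$ closer to $x$'' is well defined independently of the ray along which one compares, which rests on the nested--ovaloid structure of strictly hyperbolic surfaces recalled in Section \ref{sec:HypSurf} (the two boundary components of a given layer component are crossed in the same order along every ray through a point of the hyperbolicity domain), and then on reconciling that geometric convention with the analytic formula $\sigma=\sign\bigl(t\,q'(t)\bigr)$ uniformly in $v$. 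Everything past that is the Newton cone argument together with the residue identity above.
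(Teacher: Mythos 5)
Your reduction of the theorem to the one--dimensional cancellation $\int_\ell \sign(t)\,\rho(x+tv)\,dt=0$ along each line $\ell$ through $x$ is exactly the paper's reduction (the Newton cone argument of Figure \ref{fig:NewtonToIvory}), and your sign bookkeeping $\sigma=\sign\bigl(t\,q'(t)\bigr)$ at the roots is right. Where you diverge is in how the cancellation is established. The paper does it with a single application of Vieta's formula (Lemma \ref{lem:Arnold}): writing $q(t)=p(x+tv)$, the signed contribution of the $i$-th component of $\{0\le q\le\epsilon\}\cap\ell$ is exactly $t_i^\epsilon-t_i$ (your four--case check gives this, and it needs no monotonicity of $q$ on the component), and $\sum_i t_i^\epsilon=\sum_i t_i=-a_{d-1}/a_d$ because $q$ and $q-\epsilon$ differ only in the constant term and $d\ge 2$. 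Your coarea--plus--residues computation is precisely the $w$-derivative of this identity, since $\sum_{q(t)=w}1/q'(t)=\frac{d}{dw}\sum_i t_i^w$; it proves the same thing but only uses the levels $w=0$ and $w=\epsilon$ in the paper's version, whereas you integrate over all intermediate levels.

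That detour is where your write-up has two silent assumptions that are true but not automatic. First, the pointwise identity $\sign(t)\,\rho(x+tv)=\sign\bigl(q'(t)\bigr)$ on $D\cap\ell$ requires $q$ to be monotone on each component of $\{0\le q\le\epsilon\}\cap\ell$; otherwise $\int\sign(q')\,dt$ over a component is not its signed length and does not match the physical contribution. Second, the coarea formula produces the sum of $1/q'$ over the \emph{real} preimages of $w$, and this equals the full residue sum (hence $0$) only if all $d$ roots of $q-w$ are real for $w\in(0,\epsilon)$. Both facts hold, and for the same reason: the set of levels $w$ at which $q-w$ has $d$ distinct real roots is an open interval cut out by the critical values of $q$ (one needs $w$ below every local maximum value and above every local minimum value); since this interval contains both $0$ and $\epsilon$, it contains $[0,\epsilon]$, so every critical value of $q$ lies outside the layer, $q$ is monotone on each component, and $q-w$ stays strictly hyperbolic throughout. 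You should either insert this observation or, more economically, skip the intermediate levels altogether: sum $t_i^\epsilon-t_i$ over the components and quote Vieta. Your remaining points --- the hemisphere splitting, the indispensability of $d\ge2$, the measure-zero exceptional directions, the Nuij approximation for non-strictly hyperbolic surfaces, and the consistency of the sign rule across rays via the nested ovaloid structure --- all agree with the paper's treatment.
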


As in the case of homeoids and homeoidal charges, in the limit $\epsilon \to 0$ we obtain a hyperbolic surface charged with the density $\frac{1}{\|\grad p\|}$ (and with the sign of the charge  on different components subject to the same rule as above).
\begin{corollary}
A standard charge on a hyperbolic surface creates a zero electrostatic field in its hyperbolicity domain.
\end{corollary}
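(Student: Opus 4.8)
The plan is to derive the corollary as the $\epsilon\to 0$ limit of Arnold's theorem, in exact analogy with the passage from homeoidal shells to homeoidal densities (Lemma~\ref{lem:HomDens}). Write $M=p^{-1}(0)$, and for small $\epsilon>0$ let $D_\epsilon=\{0\le p\le\epsilon\}$ be the standard layer carrying the sign convention described above. Fix a point $x$ in the (open) hyperbolicity domain of $M$. Since that domain is the interior of the innermost projectively convex ovaloid, $x$ is at positive distance from $M$; hence for all sufficiently small $\epsilon$ the layer $D_\epsilon$ is disjoint from $x$, the point $x$ lies in the hyperbolicity domain of the layer (which contains any fixed compact subset of the hyperbolicity domain of $M$ once $\epsilon$ is small), and the field kernel $y\mapsto (x-y)/\|x-y\|^{n}$ is bounded on $D_\epsilon$ uniformly in $\epsilon$. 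Arnold's theorem then gives that the field $F_\epsilon(x)$ produced by the charged layer vanishes for every such $\epsilon$.

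The heart of the argument is a coarea computation: for any continuous $g$ on a neighbourhood of $M$,
\[
\frac1\epsilon\int_{D_\epsilon} g(y)\,dy=\frac1\epsilon\int_0^\epsilon\!\left(\int_{p^{-1}(t)}\frac{g(y)}{\|\grad p(y)\|}\,d\sigma_t(y)\right)dt\;\xrightarrow[\epsilon\to0]{}\;\int_M\frac{g(y)}{\|\grad p(y)\|}\,d\sigma(y),
\]
the last step by continuity in $t$ of the inner integral. Applying this componentwise, with $g(y)=(x-y)/\|x-y\|^{n}$ and the sign attached to each component of $D_\epsilon$, the left side equals $\frac1\epsilon F_\epsilon(x)$, while the right side is exactly the field produced at $x$ by the standard charge on $M$ (density $\|\grad p\|^{-1}$ with the inherited signs). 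Since $F_\epsilon(x)=0$ for all small $\epsilon$, the limit is $0$; as $x$ was an arbitrary point of the hyperbolicity domain, the standard charge creates a zero field there.

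The step requiring genuine care is the behaviour near singular points of $M$, where $\grad p$ vanishes: there the density $\|\grad p\|^{-1}$ is a priori non-integrable and the foliation $p^{-1}(t)$ degenerates, so the coarea identity above is not immediately justified. For a strictly hyperbolic surface the innermost ovaloid is smooth, but other components may be singular. The clean remedy is a second limit: by Nuij's theorem \cite{Nuij68} every hyperbolic surface is a limit of strictly hyperbolic ones, for which $\grad p\neq0$ on the relevant portion of the layer and the computation above applies verbatim; one then checks that both the standard charge and the field it produces depend continuously on $p$ over any compact subset of the hyperbolicity domain, so the conclusion persists in the limit. A secondary, purely bookkeeping point is to confirm that the sign rule for the components of $D_\epsilon$ passes consistently to the limiting charge on $M$, which is immediate since the sign attached to a given component is constant in $\epsilon$.
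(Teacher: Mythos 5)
Your proposal is correct and follows essentially the same route as the paper, which obtains the standard charge as the renormalized $\epsilon\to 0$ limit of the standard layer and invokes Arnold's theorem for each layer; your coarea computation simply makes explicit the limit the paper takes for granted. (One small remark: for a \emph{strictly} hyperbolic surface every component is automatically smooth, since a singular point would force some line through the hyperbolicity domain to meet $M$ with multiplicity, so your Nuij-approximation step is only needed for non-strictly hyperbolic surfaces, exactly as in the paper.)
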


Exactly as in the Newton-Ivory situation (see Figure \ref{fig:NewtonToIvory}), Arnold's theorem follows from the lemma below.

\begin{lemma}
\label{lem:Arnold}
Let $p$ be a polynomial of degree $d \ge 2$ in one variable with $d$ roots $t_1, \ldots, t_d$, and let $t_1^\epsilon, \ldots, t_d^\epsilon$ be the roots of $p(t) - \epsilon$. Then
\[
\sum_{i=1}^d (t_i - t_i^\epsilon) = 0.
\]
\end{lemma}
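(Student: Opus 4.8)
The plan is to invoke Vieta's formulas and observe that subtracting the constant $\epsilon$ from $p$ changes only the constant term, leaving the two top coefficients untouched — and it is precisely the ratio of those two coefficients that controls the sum of the roots. This is where the hypothesis $d\ge 2$ enters: for $d=1$ the coefficient of $t^{d-1}$ \emph{is} the constant term, and the statement fails.

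First I would write $p(t)=c_d t^d + c_{d-1}t^{d-1}+\cdots+c_1 t + c_0$ with $c_d\ne 0$. Since $t_1,\ldots,t_d$ are the roots of $p$ counted with multiplicity, Vieta gives $\sum_{i=1}^d t_i = -\,c_{d-1}/c_d$. Next I would note that $p(t)-\epsilon = c_d t^d + c_{d-1}t^{d-1}+\cdots+c_1 t + (c_0-\epsilon)$ has the same leading coefficient $c_d$ and the same coefficient $c_{d-1}$ of $t^{d-1}$ (here $d\ge 2$ is used so that these are two genuinely distinct coefficients, neither of which is the constant term). Applying Vieta again to $p(t)-\epsilon$, whose roots are $t_1^\epsilon,\ldots,t_d^\epsilon$, yields $\sum_{i=1}^d t_i^\epsilon = -\,c_{d-1}/c_d$ as well.

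Subtracting the two identities gives $\sum_{i=1}^d (t_i - t_i^\epsilon) = -\,c_{d-1}/c_d - \big(-\,c_{d-1}/c_d\big) = 0$, as claimed. There is essentially no obstacle here; the only points worth stating explicitly are that the roots are to be counted with multiplicity (and may a priori be complex, though in the application to hyperbolic surfaces they are real and distinct), and that $d\ge 2$ is indispensable. I would also remark that this lemma is exactly the one-dimensional mechanism behind the Newton–Ivory cancellation: restricting a polynomial and its $\epsilon$-shift to a line through $x$ produces the two intersection ``depths'' whose signed total vanishes, matching the cancellation of opposite truncated cones in Figure \ref{fig:NewtonToIvory}.
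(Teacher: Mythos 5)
Your proof is correct and is essentially identical to the paper's: both apply Vieta's formula to $p$ and to $p-\epsilon$, observe that the coefficients of $t^d$ and $t^{d-1}$ are unchanged, and conclude that the two root sums agree. Your added remarks on multiplicity and on why $d\ge 2$ matters are accurate but not needed beyond what the paper states.
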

\begin{proof}
By Vieta's formula,
\[
t_1 + \cdots + t_d = -\frac{a_{d-1}}{a_d} = t_1^\epsilon + \cdots + t_d^\epsilon,
\]
where $p(t) = a_d t^d + a_{d-1} t^{d-1} + \cdots$. The lemma follows.
\end{proof}

Arnold's theorem holds for non-strictly hyperbolic surfaces as well, since these can be approximated by strictly hyperbolic ones. The sign rule of the charge and the domain of the vanishing electrostatic field depend on the choice of approximation (there can be topologically different choices). For example, the coordinate axes in $\R^2$ charged by $\frac1y$, respectively $\frac1x$, create a zero electrostatic field in the domain $xy>0$.

V. Vassiliev and W. Ebeling \cite{Vas98} have shown that, outside of the 
hyperbolicity domain, the force field is algebraic if $d=2$ or $n=2$, and 
non-algebraic otherwise. See \cite{Vas02, KhL14} for surveys.

\subsubsection{Extension of Arnold's theorem to spaces of constant curvature}
An algebraic surface of degree $d$ in $\Sph^n \subset \R^{n+1}$ or $\HH^n \subset \R^{n,1}$ is the zero set of a homogeneous polynomial of degree $d$ in $n+1$ variables. Similarly to Section \ref{sec:HypSurf}, we call an algebraic surface $M$ hyperbolic with respect to a point $x$ if every great circle through $x$ intersects $M$ in $d$ distinct pairs of antipodal points (for $M \subset \Sph^n$) or if every geodesic through $x$ intersects $M$ in $d$ distinct points.

An algebraic surface $p^{-1}(0) \cap \Sph^n$ is hyperbolic if and only if the corresponding affine algebraic surface $p^{-1}(0) \cap \{x_0 = 1\}$ is hyperbolic. Topologically, a hyperbolic surface in $\Sph^n$ consists of $\lfloor \frac{d}2 \rfloor$ antipodal pairs of nested ovaloids and (for $d$ odd) one additional centrally symmetric ovaloid.

An algebraic surface in $p^{-1}(0) \cap \HH^n$ is hyperbolic if and only if the surface $p^{-1}(0) \cap \{x_0 = 1\}$ is hyperbolic and is contained in the Cayley-Klein ball $x_1^2 + \cdots + x_n^2 < 1$. (Otherwise through every point in $\HH^n$ there is a line that intersects the surface outside of the ball; hence the hyperbolicity domain is empty.) This implies that in the hyperbolic space there are no hyperbolic surfaces of odd degree (an odd degree affine hyperbolic hypersurface has a component isotopic to the projective hyperplane, hence must leave the Cayley-Klein ball). A hyperbolic surface of an even degree $d$ in $\HH^n$ consists of $d$ nested ovaloids.

A spherical or hyperbolic \emph{standard layer} is the shell between two level sets $\epsilon_1 \le p(x) \le \epsilon_2$ intersected with $\Sph^n$ or with $\HH^n$. Put on the components of a standard layer charges of a constant density and with plus or minus signs according to the rule described in Section \ref{sec:Layer}.

On $\Sph^n$, every standard layer is symmetric with respect to the center of the sphere. For surfaces of even degree, the charges at the opposite points have different signs, while for surfaces of odd degree they have the same sign, and therefore the resulting electrostatic field vanishes everywhere.

\begin{theorem} \label{Arnconst}
For every hyperbolic surface in $\Sph^n$ or $\HH^n$, a charged standard layer creates a zero electrostatic field in its hyperbolicity domain.
\end{theorem}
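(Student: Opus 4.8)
The plan is to reduce Theorem \ref{Arnconst} to the same one-variable algebraic identity (Lemma \ref{lem:Arnold}) that powers the Euclidean case, exactly as Newton's and Ivory's theorems in constant curvature were reduced to Lemma \ref{lem:EqualHeights}. The key geometric input is that, as in Figure \ref{fig:NewtonToIvory}, the force exerted on a point $x$ in the hyperbolicity domain by a thin two-sided cone with apex $x$ intersecting the layer is, after the cancellation of the area factor of the sphere (radius $\sinh^{n-1}r$ or $\sin^{n-1}r$) against the decay of the field ($\sinh^{-(n-1)}r$ or $\sin^{-(n-1)}r$), proportional to a signed sum of ``heights'' of the truncated pieces along the geodesic. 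So it suffices to show that along every geodesic through $x$ this signed sum of heights vanishes.

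First I would set up coordinates: a geodesic through $x$ in $\Sph^n$ or $\HH^n$ is the intersection of the ambient sphere or hyperboloid with a $2$-plane $L \subset \R^{n+1}$, so points of the geodesic are parametrized by an angle $\theta$ (spherical case) or by a hyperbolic arclength / rapidity parameter $t$ (hyperbolic case). Restricting the homogeneous degree-$d$ polynomial $p$ to $L$ and dehomogenizing gives a one-variable function whose zero set on the geodesic determines the $2d$ (in $\Sph^n$, by antipodal pairs) or $d$ (in $\HH^n$) intersection points with $M$, and similarly $p - \epsilon_1$, $p-\epsilon_2$ give the boundaries of the layer. The heart of the matter is to choose the right affine coordinate on the geodesic so that in that coordinate $p|_L$ becomes an honest degree-$d$ polynomial with leading coefficient independent of the level $\epsilon$; then Lemma \ref{lem:Arnold} (applied to consecutive level sets, i.e. to $p-\epsilon_1$ versus $p-\epsilon_2$) gives that the sum of the roots is unchanged, hence $\sum_i (t_i^{\epsilon_1} - t_i^{\epsilon_2}) = 0$. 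Finally I would check that in this coordinate the arclength element is, up to a factor constant along the geodesic, $dt$ — equivalently that the measured ``heights'' of the layer pieces are affine-linear in the chosen parameter — so that the vanishing of the root-sum really is the vanishing of the net signed height, and hence of the net force; the signs in the root-sum match the charge signs by the construction of the standard layer exactly as in Section \ref{sec:Layer}. As in the Euclidean case, the result first holds for strictly hyperbolic surfaces and passes to general hyperbolic ones by the approximation result of Nuij \cite{Nuij68}.

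The main obstacle is the choice of coordinate on the geodesic and the verification that the height element is affine in it. In the Euclidean case a line is naturally affinely parametrized and a degree-$d$ polynomial restricts to a degree-$d$ polynomial, so Vieta applies on the nose; on the sphere and in hyperbolic space one must instead use a projective (Cayley--Klein) chart, in which geodesics are straight lines but the metric is not flat. Concretely, in the Klein model $\{x_0 = 1\}$ the polynomial $p$ restricted to a line is genuinely a degree-$d$ polynomial in the affine parameter $s$, so the Vieta argument runs; but the hyperbolic (or spherical) length element along that line is not $ds$, it is $ds$ times a function of $s$. The technical crux is therefore to show that the net signed length of the truncated-cone pieces is nonetheless governed by the plain affine root-sum — this is precisely the content of Lemma \ref{lem:EqualHeights}-type computations (the hyperbolic-isometry normalization used there, or the direct computation alluded to in Lemma \ref{lem:ArnSph}), now carried out for a degree-$d$ level set rather than a quadric. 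Once that length-versus-affine-parameter bookkeeping is pinned down, the field outside the layer is handled, as before, by the fact that a rotationally symmetric divergence-free field is determined by Lemma \ref{lem:GravSphHyp} together with the behavior of the total charge; but for Theorem \ref{Arnconst} as stated only the vanishing inside the hyperbolicity domain is claimed, so this last point is not even needed.
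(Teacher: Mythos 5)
Your reduction of the theorem to a per-geodesic cancellation of signed heights is exactly the frame the paper uses, and the outer layer of the argument (cone decomposition, cancellation of $\sin^{n-1}r$ or $\sinh^{n-1}r$ against the field strength, sign conventions, Nuij approximation) is fine. But the core algebraic step is left unproven, and the mechanism you propose for it would fail. You hope to find a coordinate $t$ on the geodesic in which $p|_L$ dehomogenizes to an honest degree-$d$ polynomial \emph{and} arclength is affine in $t$, so that the sum-of-roots Vieta identity of Lemma \ref{lem:Arnold} applies. No such coordinate exists: on $\HH^1=\{xy=1,\ x>0\}$ the arclength parameter is $t=\log x$, and in this parameter the restriction of $p$ is an exponential polynomial, not a polynomial. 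The paper's resolution is not a reparametrization but a switch from the additive to the multiplicative Vieta formula: setting $P(x)=x^d p(x,1/x)$, a degree-$2d$ polynomial containing only even-degree monomials, the sum $\sum_i t_i=\log\prod_i x_i$ is controlled by the \emph{constant term and leading coefficient} of $P$; the level-set perturbation changes $P$ to $P(x)-\epsilon x^d$, which leaves those two coefficients untouched and, when $d$ is even, also preserves the evenness and hence the pairing of roots into $\pm x_i$, so $\prod_i x_i^\epsilon=\prod_i x_i$ and $\sum_i(t_i-t_i^\epsilon)=0$. The spherical case is handled identically after the substitution $u=x+iy$, $v=x-iy$. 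This is the ``direct computation'' you allude to but defer, and it is the whole content of the proof.

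A second, related omission: the per-geodesic cancellation genuinely holds only for \emph{even} degree. For odd $d$ the monomial $\epsilon x^d$ breaks the parity of $P$, and one only gets the weaker symmetrized identity $\sum_i\bigl(t_i-\tfrac12(t_i^{\epsilon}+t_i^{-\epsilon})\bigr)=0$ (second part of Lemma \ref{lem:ArnSph}), which does not prove the theorem along a single geodesic. The paper disposes of odd degree by entirely different means: in $\HH^n$ there are no hyperbolic surfaces of odd degree at all (an odd-degree affine hyperbolic surface has a component isotopic to the projective hyperplane and must leave the Cayley--Klein ball), and in $\Sph^n$ an odd-degree standard layer carries equal charges at antipodal points, which, since $u(\pi-r)=-u(r)$, cancel so that the field vanishes everywhere. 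Your proposal does not distinguish the parities and would silently assert a false per-geodesic identity in the odd case.
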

\begin{proof}
For surfaces of even degree this follows from the first part of Lemma \ref{lem:ArnSph} below.
In $\HH^n$ there are no hyperbolic surfaces of odd degree. In $\Sph^n$ a standard layer at a surface of odd degree creates a zero field everywhere for trivial reasons.
\end{proof}

An infinitesimally thin standard layer is equivalent to a surface charged with the density $\frac{1}{\|\grad p\|}$, where in the $\HH^n$ case the norm of the gradient is the Minkowski norm.

\begin{corollary}
A standard charge on a hyperbolic surface in $\Sph^n$ or $\HH^n$ creates a zero electrostatic field in its hyperbolicity domain.
\end{corollary}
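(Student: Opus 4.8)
The plan is to obtain the corollary as the $\epsilon\to0$ limit of Theorem \ref{Arnconst}, in exact parallel with the passage from a homeoidal shell to a homeoidal density used in Lemma \ref{lem:HomDens} and Theorem \ref{LIconst}. Fix a hyperbolic surface $M=p^{-1}(0)\cap\Sph^n$ (resp. $M=p^{-1}(0)\cap\HH^n$); if $\deg p$ is odd we are necessarily in $\Sph^n$, and a standard charge then creates a zero field everywhere for the trivial parity reason already noted, so assume $\deg p=d$ even (and, after an approximation exactly as in the Euclidean case, $M$ strictly hyperbolic, hence smooth). For each small $\epsilon>0$ let $D_\epsilon$ be the standard layer $\{0\le p\le\epsilon\}$ intersected with $\Sph^n$ (resp. $\HH^n$), carrying the constant density and sign rule of Section \ref{sec:Layer}, and let $F_\epsilon$ be its electrostatic field. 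By Theorem \ref{Arnconst}, $F_\epsilon$ vanishes on the hyperbolicity domain $\Omega_\epsilon$ of the layer.

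The first step is the thin-layer dictionary. The level sets $p^{-1}(0)$ and $p^{-1}(\epsilon)$ are separated along $M$ by the distance $\epsilon/\|\grad p\|+O(\epsilon^2)$, with $\|\cdot\|$ the Minkowski norm in the $\HH^n$ case, by the same computation as in Lemma \ref{lem:HomDens}; hence the signed charge of $D_\epsilon$, divided by $\epsilon$, converges weakly as $\epsilon\to0$ to the signed surface measure on $M$ with density $1/\|\grad p\|$, i.e. to the standard charge, whose field we call $F$. For $x$ in the open hyperbolicity domain $\Omega$ of $M$ one has $\dist(x,y)\in(0,\pi)$ for all $y\in M$ (in the spherical case $\dist(x,y)=\pi$ would force $-x\in M$, hence $x\in M$ by central symmetry of $M$, a contradiction), so on every compact $K\subset\Omega$ the kernel $u(\dist(x,\cdot))$ of Lemma \ref{lem:GravSphHyp} together with its $x$-gradient is smooth and uniformly bounded in a neighbourhood of $M$. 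Weak convergence of the charges therefore yields $\epsilon^{-1}F_\epsilon\to F$ uniformly on $K$; and since rescaling a charge by $\epsilon^{-1}$ rescales its field by $\epsilon^{-1}$, the field $\epsilon^{-1}F_\epsilon$ still vanishes on $\Omega_\epsilon$.

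It remains to match the domains. For small $\epsilon$ the surface $p^{-1}(\epsilon)$ is strictly hyperbolic, and by the structure theorem for strictly hyperbolic surfaces recalled in Section \ref{sec:HypSurf} its innermost projective ovaloid varies continuously with $\epsilon$ and converges to that of $M$ as $\epsilon\to0$. Hence $\Omega_\epsilon=\Omega\cap\Omega'_\epsilon$, with $\Omega'_\epsilon$ the hyperbolicity domain of $p^{-1}(\epsilon)$, exhausts $\Omega$: any compact $K\subset\Omega$ lies in $\Omega_\epsilon$ for all sufficiently small $\epsilon$. Therefore $F(x)=\lim_{\epsilon\to0}\epsilon^{-1}F_\epsilon(x)=0$ for every $x\in K$, and since $K$ was arbitrary, $F\equiv0$ on $\Omega$, which is the claim.

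The only point beyond the routine $\epsilon\to0$ calculus already used for homeoids (the level-spacing estimate, and continuity of the potential and its gradient under weak limits of charges supported away from the evaluation point) is the convergence of the hyperbolicity domains; I expect this to be the main, though mild, obstacle, and it is handled entirely by the Nuij-type structure theorem quoted in Section \ref{sec:HypSurf}. As an alternative, one can bypass Theorem \ref{Arnconst} and argue directly on each great circle or geodesic through $x$, replacing Lemma \ref{lem:Arnold} by its limiting form $\sum_{i=1}^d 1/p'(t_i)=0$ (obtained by differentiating $\sum_i(t_i-t_i^\epsilon)=0$ at $\epsilon=0$) together with the cone cancellation of Figure \ref{fig:NewtonToIvory}; but routing through Theorem \ref{Arnconst} is the shortest.
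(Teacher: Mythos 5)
Your proposal is correct and follows essentially the same route as the paper, which obtains the corollary directly from Theorem \ref{Arnconst} by viewing the standard charge with density $1/\|\grad p\|$ as the $\epsilon\to 0$ limit of thin standard layers. You merely make explicit the routine limiting details (weak convergence of the rescaled charges, regularity of the kernel away from the surface, and exhaustion of the hyperbolicity domain) that the paper leaves implicit.
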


\begin{lemma}
\label{lem:ArnSph}
Let $p$ be a homogeneous polynomial of degree $d$ in two variables. Assume that the cone $p^{-1}(0)$ intersects an open unit half-circle $\Sph^1_+$ (or, respectively, a branch $\HH^1$ of the hyperbola) in $d$ distinct points, and let $t_1, \ldots, t_d$ be the coordinates of these points in an arc-length parametrization of $\Sph^1$ (respectively, in a hyperbolic arc-length parametrization of $\HH^1$). For $\epsilon$ small enough, let $t_1^\epsilon, \ldots, t_d^\epsilon$ be the coordinates of the intersection of $\Sph^1_+$ (respectively $\HH^1$) with the curve $p^{-1}(\epsilon)$.
Then the following holds:
\begin{enumerate}
\item
If $d$ is even, then $\sum_{i=1}^d (t_i - t_i^\epsilon) = 0$;
\item
If $d$ is odd, then $\sum_{i=1}^d \left(t_i - \frac{t_i^{-\epsilon} + t_i^\epsilon}2\right) = 0$.
\end{enumerate}
\end{lemma}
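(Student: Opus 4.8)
The plan is to reduce both statements to the one-variable Euclidean Lemma~\ref{lem:Arnold} by passing to a suitable projective chart, and then to correct for the distortion between the affine length parameter and the spherical (resp. hyperbolic) arc-length parameter. Working in the plane spanned by the chosen great circle (resp. geodesic), we may take homogeneous coordinates $(x_0,x_1)$ on $\R^2$ so that $p(x_0,x_1)$ is a binary form of degree $d$. Dehomogenizing via $s = x_1/x_0$ gives a degree-$d$ polynomial $\widetilde{p}(s) = p(1,s)$ whose roots $s_1,\dots,s_d$ are the slopes of the $d$ lines $p^{-1}(0)$, and similarly $\widetilde{p}(s) - \epsilon x_0^d$ — wait, more precisely $p(1,s) - \epsilon$ — has roots $s_i^\epsilon$. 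The arc-length parameter $t$ on $\Sph^1_+$ relates to the affine slope by $s = \tan t$ (for a suitable choice of origin), and on $\HH^1$ by $s = \tanh t$; conversely $t = \arctan s$ resp. $t = \operatorname{artanh} s$.

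The key step is therefore to evaluate $\sum_i \big(\arctan s_i - \arctan s_i^\epsilon\big)$ (resp. with $\operatorname{artanh}$) and show it vanishes. First I would expand to first order in $\epsilon$: since $s_i^\epsilon = s_i + \epsilon/\widetilde{p}'(s_i) + O(\epsilon^2)$, the sum becomes $-\epsilon \sum_i \frac{1}{(1+s_i^2)\,\widetilde{p}'(s_i)} + O(\epsilon^2)$ in the spherical case (with $1 - s_i^2$ in place of $1+s_i^2$ in the hyperbolic case). Now $\sum_i \frac{g(s_i)}{\widetilde{p}'(s_i)}$ is a classical partial-fractions/residue expression: it equals the sum of residues of $g(s)/\widetilde{p}(s)$ at the roots of $\widetilde{p}$, hence (when $\deg g \le \deg \widetilde{p} - 2$) equals minus the residue at infinity, which is the coefficient extracted from the Laurent expansion. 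For $g(s) = \frac{1}{1 \pm s^2}$ one has $\deg g = -2 < d - 2$ only when $d \ge 2$ — actually $g$ is not polynomial, so I would instead write $\sum_i \frac{1}{(1+s_i^2)\widetilde p'(s_i)}$ as a contour integral $\frac{1}{2\pi i}\oint \frac{ds}{(1+s^2)\widetilde p(s)}$ over a large circle, and observe that deforming the contour picks up the poles at $s = \pm i$ with residues $\mp\frac{1}{2i\,\widetilde p(\pm i)}$; for $d$ even these are complex-conjugate and cancel in the real sum, while the integral over the large circle is $O(R^{-1-d}) \to 0$. This is exactly where the parity of $d$ enters: for $d$ odd, $\widetilde p(i)$ and $\widetilde p(-i)$ are related by $\widetilde p(-i) = (-1)^d \overline{\widetilde p(i)} = -\overline{\widetilde p(i)}$, so the two residue contributions reinforce rather than cancel, producing a nonzero first-order term — which is precisely why statement~(2) must symmetrize $t_i^\epsilon$ with $t_i^{-\epsilon}$: replacing $s_i^\epsilon$ by $\tfrac12(s_i^\epsilon + s_i^{-\epsilon})$ kills the term linear in $\epsilon$, leaving an even function of $\epsilon$, and one repeats the argument order by order.

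For the full (non-infinitesimal) statement I would argue by a generating-function identity rather than term-by-term expansion: $\sum_i \arctan s_i = \arg \prod_i (1 - i s_i) \pmod{\pi}$ up to additive constants depending on the chamber, and $\prod_i(s - s_i) = \widetilde p(s)/a_d$, so $\sum_i \arctan s_i - \sum_i \arctan s_i^\epsilon = \arg\!\big(\widetilde p(i)\big) - \arg\!\big(\widetilde p_\epsilon(i)\big)$ where $\widetilde p_\epsilon = \widetilde p - \epsilon$. Since $\widetilde p$ and $\widetilde p_\epsilon = \widetilde p - \epsilon$ differ only by a constant and both have all real roots (strict hyperbolicity), the values $\widetilde p(i)$ and $\widetilde p(i) - \epsilon$ have — and here is the crucial computation — for $d$ even, equal argument modulo $\pi$, because $\widetilde p(i)$ is real: indeed $\widetilde p(i) = \sum_k a_k i^k$, and for the form to have $d$ real roots the imaginary part $\sum_{k \text{ odd}} a_k i^{k-1}(\pm1)$ must vanish... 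I expect this to require the observation that a real binary form with all real roots, evaluated at $i$, has argument $0$ or $\pi$ exactly when $d$ is even (since $\overline{\widetilde p(i)} = \widetilde p(-i) = (-1)^d \widetilde p(i)$ using homogeneity $p(\bar x_0, \bar x_1) = \overline{p(x_0,x_1)}$ and $\widetilde p(-i) = p(1,-i)$, whereas reality of the roots... ) — this is the main obstacle, reconciling the sign bookkeeping of which chamber each $\arctan$ lives in with the branch of $\arg$, and I would handle it by a homotopy in $\epsilon$ from $0$ to its final value, noting that no root crosses the point at infinity during the deformation (strict hyperbolicity is preserved), so the total variation of $\sum \arctan s_i$ is continuous and equals the variation of a single continuous branch of $\arg \widetilde p_\epsilon(i)$, which is $0$ for $d$ even since $\widetilde p_\epsilon(i)$ stays real. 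The hyperbolic case is identical with $i$ replaced by $1$ and $\arctan$ by $\operatorname{artanh}$: then $\widetilde p_\epsilon(1) = \widetilde p(1) - \epsilon$ moves along the real axis, $\sum \operatorname{artanh} s_i = \tfrac12 \log\big|\tfrac{\widetilde p_\epsilon(1)}{\widetilde p_\epsilon(-1)}\big|$ up to constants, and one checks this difference is $\epsilon$-independent — but here hyperbolicity (roots inside $(-1,1)$) guarantees $\widetilde p_\epsilon(\pm 1)$ never vanish, keeping the logarithm finite. I would reduce statement~(1) to Lemma~\ref{lem:Arnold} directly as a sanity check: in the limit of the Klein model near the center, $\tanh t \approx t$, recovering the Euclidean Vieta identity.
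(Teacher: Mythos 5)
Your reduction contains a genuine error at the very first step, and it propagates through everything that follows. You identify the points $t_i^\epsilon$ with $\arctan$ (resp.\ $\operatorname{artanh}$) of the roots of $\widetilde p(s)-\epsilon$, where $\widetilde p(s)=p(1,s)$. But the roots of $p(1,s)-\epsilon$ are the slopes of the intersections of the level set $p^{-1}(\epsilon)$ with the affine line $x_0=1$, not with the unit circle; radially projecting those points onto the circle does not land them on $p^{-1}(\epsilon)$, because $p$ is homogeneous of degree $d\ge 2$. The correct equation for the circle points is $p(\cos t,\sin t)=\epsilon$, i.e.\ $\widetilde p(s)=\epsilon\,(1+s^2)^{d/2}$ with $s=\tan t$ (and $\widetilde p(s)=\epsilon\,(1-s^2)^{d/2}$ in the hyperbolic case). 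With your (incorrect) points the claimed identity actually fails: for $p(x,y)=(y-x)(y+2x)$, $d=2$, one computes $\sum_i \frac{d}{d\epsilon}\arctan s_i^\epsilon\big|_{\epsilon=0}=\frac{1}{(1+1)\cdot 3}+\frac{1}{(1+4)\cdot(-3)}=\frac1{10}\neq 0$, whereas for the true circle points the first-order term is $\frac13-\frac13=0$. The same misidentification undermines your residue computation (and, independently, the claim that $\widetilde p_\epsilon(i)$ is real for $d$ even is false: $\widetilde p(i)=-3+i$ in the example above), as well as the generating-function identity at the end, which again evaluates $\widetilde p-\epsilon$ rather than $\widetilde p-\epsilon(1+s^2)^{d/2}$ at $s=i$.

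The underlying strategy --- convert the sum of arc-length coordinates into the argument (or logarithm) of a product of root coordinates and control that product by Vieta --- is sound and is in fact what the paper does. The missing ingredient is precisely the homogenization of $\epsilon$ by the defining equation of the curve: on $xy=1$ one replaces $p=\epsilon$ by $p(x,y)-\epsilon(xy)^{d/2}=0$ and studies $P(x)=x^d p(x,1/x)$, a polynomial in $x^2$, perturbed to $P(x)-\epsilon x^d$; on the circle one substitutes $u=x+iy$, $v=x-iy$ with $uv=1$. For $d$ even the perturbation preserves the evenness of $P$, so the product of the positive roots --- hence $\sum t_i=\sum\log x_i$ (resp.\ $\sum\arg u_i$) --- is unchanged; for $d$ odd the perturbing monomial $\epsilon x^d$ is odd, which forces the symmetrization over $\pm\epsilon$ exactly as in statement~(2). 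Your parity discussion correctly anticipates why the odd case needs symmetrizing, but without the correct homogenization the argument as written does not prove the lemma.
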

\begin{proof}
Let us discuss the hyperbolic case first. Choose coordinates $x,y$ in $\R^2$ so that $\HH^1 = \{(x,y) \mid xy = 1, x > 0\}$. Let $(x_i, y_i)$ be the coordinates of the $i$-th intersection point. We have
\[
p(x_i,y_i) = 0, \quad x_iy_i = 1.
\]
Consider the degree $2d$ polynomial
\[
P(x) = x^d p\left(x, \frac1x\right).
\]
By construction, $x_1, \ldots, x_d$ are roots of $P$. Since $P$ contains only even degree monomials, its other $d$ roots are $-x_1, \ldots, -x_d$. Hence we have
\[
x_1 \cdot \ldots \cdot x_d = \sqrt{(-1)^d\frac{a_{0}}{a_{2d}}},
\]
where $a_0$ is the constant term of $P$, and $a_{2d}$ is the leading coefficient.

Let $d$ be even. Then, similarly to the above, $\pm x^\epsilon_1, \ldots, \pm x^\epsilon_d$ are the roots of the polynomial $P(x) - \epsilon x^d$, and we have
\[
x^\epsilon_1 \cdot \ldots \cdot x^\epsilon_d = \sqrt{(-1)^d\frac{a_{0}}{a_{2d}}} = x_1 \cdot \ldots \cdot x_d.
\]
A hyperbolic arc length parmetrization of $\HH^1$ is given by $t = \log x$. Hence we have
\[
x^\epsilon_1 \cdot \ldots \cdot x^\epsilon_d = x_1 \cdot \ldots \cdot x_d \Rightarrow \sum_{i=1}^d (t_i - t_i^\epsilon) = 0.
\]

Now let $d$ be odd. Then the $d$ intersection points of the hyperbola with $p^{-1}(\epsilon)$ correspond to $d$ roots $x^\epsilon_1, \ldots, x^\epsilon_n$ of the polynomial $P(x) - \epsilon x^d$. This polynomial contains a monomial of odd degree $x^d$. Due to
\[
P(-x) - \epsilon (-x)^d = P(x) + \epsilon x^d,
\]
its other $d$ roots are $-x^{-\epsilon}_1, \ldots, -x^{-\epsilon}_n$. It follows that
\[
x^\epsilon_1 \cdot \ldots \cdot x^\epsilon_n \cdot x^{-\epsilon}_1 \cdot \ldots \cdot x^{-\epsilon}_n = -\frac{a_{0}}{a_{2d}} = (x_1 \cdot \ldots \cdot x_{d})^2,
\]
and hence
\[
\sum_{i=1}^d \left(t_i - \frac{t_i^{-\epsilon} + t_i^\epsilon}2\right) = \sum_{i=1}^d \left(\log(x_i) - \log\sqrt{x^\epsilon_i x^{-\epsilon}_i}\right) = 0.
\]

In the spherical case we change the Euclidean coordinates $(x,y)$, to 
$$
u = x+iy,\ v = x-iy,
$$
so that $x^2 + y^2 = uv$. The rest of the proof is the same, with $u$ substituted for $x$.
\end{proof}

\subsection{A final remark}
The Ivory lemma holds on ellipsoids and, in the full generality, in Riemannian manifolds with St\"ackel nets. Is there an analog of the Ivory theorem for St\"ackel nets? Building on the fact that the gravitational potential is a harmonic function, one could conjecture that every St\"ackel net is compatible with a harmonic coordinate system. This seems to be false in general, but is true in a special case. With the help of the ellipsoidal coordinates, one can construct harmonic functions on an ellipsoid whose level sets are the intersections of the ellipsoid with confocal quadrics. However, there is no evident relation with the Ivory lemma: the proof of the Ivory theorem (Figure \ref{Iproof}) relies on the fact that the potential of a point is rotationally symmetric, and there is no rotational symmetry on the ellipsoid.

\end{document}